\titleformat{\section}{\large\bfseries}{\thesection}{1em}{}
\titleformat{\subsection}{\normalsize\bfseries}{\thesubsection}{1em}{}
\def \proj {{\rm Proj}}
\def \effrank {{\rm eff.rank}}
\def \med {{\rm Median}}
\def \RR {\mathbb R}
\def \QQ {\mathbb Q}
\def \HH {\mathbb H}
\def \EE {\mathbb E}
\def \ZZ {\mathbb Z}
\def \PP {\mathbb P}
\def \eps {\varepsilon}
\def \cB {\mathcal B}
\def \cG {\mathcal G}
\def \cE {\mathcal E}
\def \cS {\mathcal S}
\def \cF {\mathcal F}
\def \cA {\mathcal A}
\def \cB {\mathcal B}
\def \cO {\mathcal O}
\newtheorem{theorem}{Theorem}[section]
\newtheorem{lemma}[theorem]{Lemma}
\newtheorem{proposition}[theorem]{Proposition}
\newtheorem{corollary}[theorem]{Corollary}
\newtheorem{remark}[theorem]{Remark}
\newtheorem{definition}[theorem]{Definition}
\def\myffrac#1#2 in #3{\raise 2.6pt\hbox{$#3 #1$}\mkern-1.5mu\raise 0.8pt\hbox{$
#3/$}\mkern-1.1mu\lower 1.5pt\hbox{$#3 #2$}}
\def\qed{\hfill $\vcenter{\hrule height .3mm
\hbox {\vrule width .3mm height 2.1mm \kern 2mm \vrule width .3mm
height 2.1mm} \hrule height .3mm}$ \bigskip}
\begin{document}

\title{Super-Gaussian directions of  random vectors}
\author{Bo'az Klartag}

\date{}
\maketitle

\begin{abstract} We establish the following universality property in high dimensions:
Let $X$ be a random vector with density in $\RR^n$. The density function can be arbitrary.
We show that there exists a fixed unit vector $\theta \in \RR^n$
such that the random variable $Y = \langle X, \theta \rangle$ satisfies
$$ \min \left \{ \PP( Y \geq t M ), \PP(Y \leq -tM) \right \} \geq c e^{-C t^2} \qquad \qquad \text{for all} \ 0 \leq t \leq \tilde{c} \sqrt{n}, $$
where $M > 0$ is any median of $|Y|$, i.e., $\min \{ \PP( |Y| \geq M), \PP( |Y| \leq M ) \} \geq 1/2$. Here, $c, \tilde{c}, C > 0$ are universal
constants. The dependence on the dimension $n$ is optimal, up to universal constants, improving upon our previous work.
\end{abstract}

\section{Introduction}

Consider a random vector $X$ that is distributed uniformly in some Euclidean ball centered at the origin in $\RR^n$.
For any fixed vector $0 \neq \theta \in \RR^n$, the density of the random variable $\langle X, \theta \rangle = \sum_i \theta_i X_i$
may be found explicitly, and in fact it is proportional to
the function
\begin{equation} 
 t \mapsto \left( 1 - \frac{t^2}{A^2 n} \right)_+^{(n-1)/2} \qquad \qquad (t \in \RR) \label{eq_1437} 
 \end{equation}
  where $x_+ = \max \{ x, 0 \}$ and $A > 0$ is a parameter depending on the length of $\theta$ and the radius of the Euclidean ball.
  It follows  that when the dimension $n$ is large, the density in (\ref{eq_1437}) is close to a Gaussian density, and   
  the random variable $Y = \langle X, \theta \rangle$ has a tail of considerable size:
\begin{equation}  \PP (Y \geq t M)  \geq c \exp(-C t^2) \qquad \qquad \text{for all} \ 0 \leq t \leq \tilde{c}  \sqrt{n}. \label{eq_1051_}
\end{equation}
Here, $M = \med(|Y|)$ is any median of $|Y|$, i.e., $\min \{ \PP( |Y| \geq M), \PP( |Y| \leq M ) \} \geq 1/2$,
and $c, \tilde{c}, C > 0$ are universal constants.
Both the median and the expectation of $|Y|$ differ from $A$ by a factor which is at most a universal constant.
We prefer to work with a median since in the cases we will consider shortly, the expectation of $|Y|$ is not guaranteed to be finite.
 The inequality in (\ref{eq_1051_}) expresses the property that the tail distribution
of $Y / M $ is at least as heavy as the standard Gaussian tail distribution, for $\sqrt{n}$ standard deviations.
The dependence on the dimension $n$ is optimal, since for $t > \tilde{C} \sqrt{n}$, the probability on the left-hand side of (\ref{eq_1051_}) vanishes.

\medskip
Our goal in this paper is to  show that a similar phenomenon occurs for essentially any
random vector in $\RR^n$, and not only for the uniform distribution on the high-dimensional Euclidean ball.
Recall that when $n$ is large and the random vector $X = (X_1,\ldots,X_n)$ has independent coordinates,
the classical central limit theorem implies that under mild assumptions, there exists $0 \neq \theta \in \RR^n$ for which $\langle X, \theta \rangle$ is approximately Gaussian.
It is curious to note that a Gaussian lower bound on the tail persists, even when the independence assumption is completely dropped.

\begin{figure}
\begin{center}
\begin{tikzpicture}[scale = 2]

\fill[fill=gray] (1.5,0) -- plot [domain=1.5:3] (\x,{ exp(-\x*\x/2) }) -- (3,0) -- cycle;

\fill[fill=gray, opacity=0.2] (-0.7,0) -- plot [domain=-0.7:0.7] (\x,{ exp(-\x*\x/2) }) -- (0.7,0) -- cycle;

\draw [<->] (-3.5,0) -- (3.5,0);
\draw [->] (0, -0.1) -- (0, 1.1);
\draw[samples=200, thick,  domain=-3:3] plot (\x, {   exp(-\x*\x/2)   });

\draw (0.7,-0.03) -- (0.7,0.03);
\draw (-0.7,-0.03) -- (-0.7,0.03);
\draw (1.5,-0.03) -- (1.5,0.03);

\node [below] at (1.5,0) { $t  M$};
\node [below] at (0.7,0) { $M$};
\node [below] at (-0.7,0) { $-M$};

\node [above] at (0,1.05) {\small {\rm area} $=1/2$};
\node [above] at (2.4,0.2) {\small {\rm area} $\geq c e^{-C t^2}$};

\end{tikzpicture}
\caption{An example of a density of a Super-Gaussian random variable \label{fig1}}
\end{center}
\end{figure}
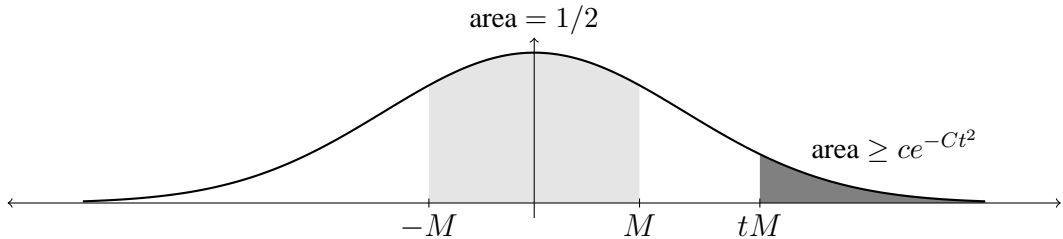

\medskip Let $Y$ be a real-valued random variable and let $L > 0$. We say that $Y$ is {\it Super-Gaussian of length $L$} with parameters $\alpha, \beta > 0$
if $\PP( Y = 0) = 0$  and for any $0 \leq t \leq L$,
$$ \min \left \{ \PP( Y \geq t M ), \PP(Y \leq -tM) \right \} \geq \alpha e^{-t^2/\beta}, $$
where $M = \med(|Y|)$ is any median of $|Y|$.
The requirement that $\PP( Y = 0) = 0$ is necessary only to avoid trivialities.
A Gaussian random variable is certainly super-Gaussian of infinite length, as well as a symmetric exponential random variable.
Write $|x| = \sqrt{\langle x, x \rangle}$ for the standard Euclidean norm of $x \in \RR^n$,
and denote  $S^{n-1} = \{ x \in \RR^n \, ;
\, |x| = 1 \}$.

\begin{theorem} Let $X$ be a random vector with density in $\RR^n$. Then there exists a fixed vector $\theta \in
S^{n-1}$
such that $\langle X, \theta \rangle$ is Super-Gaussian of length $c_1 \sqrt{n}$ with parameters $c_2, c_3 > 0$,
where $c_1,c_2,c_3 > 0$ are universal constants. \label{thm_1030}
\end{theorem}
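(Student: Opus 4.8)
\bigskip\noindent\textbf{Proof strategy.}
The plan is to isolate a clean one-dimensional mechanism — a scale mixture of an explicit Super-Gaussian variable is Super-Gaussian — to establish it for the ``averaged'' direction, and then to extract a single good direction.

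\emph{Step 1 (reductions).} The conclusion depends only on the law of $\langle X,\theta\rangle$, so replacing $X$ by $X+\eps G$ (with $G$ an independent standard Gaussian) and truncating reduces matters to a smooth, compactly supported density. Since $\langle AX,\phi\rangle = |A^{\ast}\phi|\cdot\langle X, A^{\ast}\phi/|A^{\ast}\phi|\rangle$ while the Super-Gaussian property, with its parameters, is invariant under positive rescaling, a good direction for $AX$ yields one for $X$; hence we may apply a linear map to place $X$ in a convenient position. Finally, replacing $M=\med(|\langle X,\theta\rangle|)$ by any comparable quantity changes $c_2,c_3$ only by universal factors.

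\emph{Step 2 (the averaged statement).} Let $\theta$ be uniform on $S^{n-1}$, independent of $X$. Conditioning on $X$ and using rotational invariance, $\langle X,\theta\rangle$ has the same law as $|X|\cdot U_1$, where $U=(U_1,\dots,U_n)$ is uniform on $S^{n-1}$, independent of $|X|$. The marginal $U_1$ has density proportional to $(1-s^2)^{(n-3)/2}$; a direct estimate shows it is Super-Gaussian of length $c\sqrt n$ with universal parameters and satisfies $\PP(|U_1|\le 2\med(|U_1|))\ge 2/3$. The key one-dimensional lemma is: if $V$ is Super-Gaussian of length $L$ with parameters $(\alpha,\beta)$ and $\PP(|V|\le 2\med(|V|))\ge 2/3$, and $R>0$ is any independent random variable, then $RV$ is Super-Gaussian of length $L/2$ with parameters depending only on $(\alpha,\beta)$. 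To see this, pick $\rho$ with $\PP(R\ge\rho)=1/4$. On the one hand $\med(|RV|)\le 2\rho\,\med(|V|)$, because $R\le\rho$ with probability $\ge 3/4$ and on that event $|RV|\le 2\rho\,\med(|V|)$ with conditional probability $\ge 2/3$. On the other hand, for $0\le t\le L/2$,
\[
\PP(RV\ge t\,\med(|RV|)) \ \ge\ \PP(R\ge\rho)\cdot\PP(V\ge t\,\med(|RV|)/\rho) \ \ge\ \tfrac14\,\alpha\,e^{-(2t)^2/\beta},
\]
where the last step uses the first bound to keep the argument $t\,\med(|RV|)/\rho\le 2t\,\med(|V|)$ inside the Super-Gaussian range of $V$; the left tail is handled symmetrically. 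It follows that for \emph{any} $X$ the $\theta$-averaged distribution of $\langle X,\theta\rangle$ is Super-Gaussian of length $c\sqrt n$ with universal parameters. In particular the theorem holds when $X$ is rotationally symmetric, as then every marginal has this same law.

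\emph{Step 3 (a single direction).} The remaining and principal difficulty is that for non-symmetric $X$ a single marginal $\langle X,\theta\rangle$ is not a scale mixture, and a tail bound for the mixture does not pass to its components. The plan is to show that the set of \emph{bad} directions — those $\theta$ for which $\langle X,\theta\rangle$ violates the estimate at some scale $t\le c_1\sqrt n$, on one side or the other — has normalized measure below $1$. For this I would combine: (a) concentration of measure on $S^{n-1}$ for $\theta\mapsto\PP(\langle X,\theta\rangle\ge a)$ and $\theta\mapsto\med(|\langle X,\theta\rangle|)$, available after truncating the low-probability event that $|X|$ is large, which makes these maps suitably Lipschitz; (b) a net argument over the $O(\sqrt n)$-long range of scales, together with a separate treatment of directions along which $\langle X,\theta\rangle$ is far from centered — these have atypically large medians and can be discarded; and (c) the averaged statement of Step 2 as the anchor preventing the typical marginal from being light-tailed. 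A possibly cleaner alternative is to choose the direction adaptively — for instance one minimizing $\med(|\langle X,\theta\rangle|)$, along which neither a drift nor a rare far-away mass can inflate the median — and to run the dichotomy of Step 2 directly along it.

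\emph{Where the difficulty lies.} The point is to obtain the \emph{optimal} length $c_1\sqrt n$ with universal $c_2,c_3$ at once: the scale-mixture lemma and the explicit spherical-cap density deliver exactly this for the averaged direction, and any loss of a logarithmic factor — in the concentration estimates of Step 3 or in locating the good direction — is what produced the suboptimal dependence on $n$ in earlier work. I expect the hardest part to be Step 3, since the Super-Gaussian property is a fragile lower bound on tails that is not stable under the usual distances between distributions, so the passage from a random direction to one fixed direction requires genuinely quantitative input.
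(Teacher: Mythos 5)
Your Step 2 is correct and is a genuinely clean observation: the scale-mixture lemma is sound (up to the usual care with non-unique medians, since $\tfrac34\cdot\tfrac23=\tfrac12$ exactly), and it shows that the \emph{annealed} marginal --- $\langle X,\Theta\rangle$ with $\Theta$ uniform and independent of $X$ --- is Super-Gaussian of length $c\sqrt n$. But this is not where the theorem lives, and your Step 3, which is where it does live, is a plan rather than a proof. Worse, the main branch of that plan --- show that the set of bad fixed directions has $\sigma_{n-1}$-measure below $1$ --- is not just unproved but false in general. Take $X/|X|$ uniform on an orthonormal basis $\{e_1,\dots,e_n\}$ with $|X|\equiv 1$ (smooth it slightly to get a density; this does not change the picture). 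This $X$ is angularly isotropic, and $\med(|\langle X,\theta\rangle|)\approx 1/\sqrt n$ for typical $\theta$. For $t$ of order $\sqrt n$ the required bound $\PP(\langle X,\theta\rangle\ge c t/n)\ge c e^{-Ct^2}$ forces some coordinate $\theta_i\ge c$, and the two-sided requirement forces in addition some $\theta_j\le -c$; the set of such $\theta$ has measure $O(n^2 e^{-cn})$. So almost every direction is bad, no concentration or net argument can rescue a union bound over scales, and the averaged statement of Step 2 cannot serve as an anchor because the good event it certifies is carried by a vanishingly small set of $\theta$'s. Your ``adaptive'' alternative (minimize the median) also does not locate the good directions in this example, which are those aligned with some $e_i-e_j$.

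This example is exactly what drives the paper's construction, which is quite different from a random or median-minimizing choice. After reducing to a sub-isotropic position (itself a nontrivial step, Lemma 6.4 here via Lemma 5.4 of \cite{K_euro} --- more than ``apply a linear map''), the paper fixes a $1/3$-quantile $M$ of $|X|$ and chooses $\theta_1$, then $\theta_2$ nearly orthogonal to $\theta_1$, to (nearly) maximize the probability that $X/|X|$ lands in a spherical cap of radius $1/5$ around them with $|X|\ge M$; the final direction is $\theta=(\theta_1-\theta_2+\theta_3)/|\theta_1-\theta_2+\theta_3|$ with only $\theta_3$ chosen generically. The two deterministic anchors supply both tails in the extreme range $t\in[t_0,c\sqrt n]$, where $e^{-t_0^2}$ is the maximal cap probability; the intermediate range $t\in[\sqrt{\log n},t_0]$ uses Sudakov minoration plus spherical concentration applied to a well-separated subset of the support (this is the quantitative input you correctly suspected was needed); and the range $t\le\sqrt{\log n}$ uses an almost-orthogonality/central-limit argument. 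None of these three regimes is reached by your current Steps 1--2, so the proposal as it stands has a genuine gap precisely at the theorem's main difficulty.
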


Theorem \ref{thm_1030} improves upon Corollary 1.4 from \cite{K_euro}, in which the dependence on the dimension $n$
was logarithmic. In the case where $X$ is distributed uniformly in a $1$-unconditional convex body in $\RR^n$,
Theorem \ref{thm_1030} goes back to Pivovarov \cite{Pivo} up to logarithmic factors.
In the case where $X$ is distributed uniformly in a convex body satisfying the hyperplane conjecture with a uniform constant, Theorem
\ref{thm_1030} is due to Paouris \cite{Paouris}.
Theorem \ref{thm_1030} provides a universal lower bound on the tail distribution, which is tight up to constants in the case
where $X$ is uniformly distributed in a Euclidean ball centered at the origin. In particular,  the dependence on the dimension in Theorem \ref{thm_1030} is optimal, up to the value of the universal constants.

\medskip The assumption that the random vector $X$ has a density in $\RR^n$ may be somewhat  relaxed. The following definition appears in \cite{BLYZ1, K_euro} with minor modifications:

\begin{definition} Let $X$ be a random vector in a finite-dimensional vector space
$\cB$ and let $d > 0$. We say that ``the effective rank of $X$ is at least $d$'', or in short that $X$ is of class $\effrank_{\geq d}$
if for any
linear subspace $E \subseteq \cB$,
\begin{equation} \PP(X \in E) \leq \dim(E) / d, \label{eq_1011}
\end{equation}
with equality if and only if there is a subspace $F \subseteq B$ with $E \oplus F = \cB$ and $\PP(X \in E \cup F) = 1$.
\label{def_1131}
\end{definition}

Intuitively, when $X$ is of class $\effrank_{\geq d}$ we  think of the support of $X$ as effectively spanning a
subspace whose dimension is at least $d$. Note, however, that $d$ is not necessarily an integer.
By substituting $E = \cB$ in (\ref{eq_1011}), we see that there are no random
vectors in $\RR^n$ of class $\effrank_{\geq d}$ with $d > n$. We say that the effective rank
of $X$ is $d$ when  $X$ is of class $\effrank_{\geq d}$, but for any $\eps > 0$ the random vector $X$ is not of class $\effrank_{\geq d + \eps}$.
The effective rank of $X$ is $d^-$ if $X$ is of class $\effrank_{\geq d-\eps}$ for all $0 < \eps < d$ but $X$ is not of class $\effrank_{\geq d}$.
In the terminology of \cite{K_euro}, the random vector $X$ has an effective rank greater than $d$ if and only if it is $\eps$-decent
for some $\eps < 1/d$.

\medskip There are many random vectors in $\RR^n$ whose effective rank is precisely $n$.
For example, any random vector with density in $\RR^n$, or any random vector $X$ that is distributed
uniformly on a finite set that spans $\RR^n$ and does not contain the origin.
It was shown by B\"or\"oczky, Lutwak, Yang, and Zhang
\cite{BLYZ2} and by Henk and Linke \cite{HL} that the cone volume measure of any convex body in $\RR^n$ with barycenter at the origin
is of class $\effrank_{\geq n}$ as well.
Note that a random variable $Y$ is Super-Gaussian of length $L$ with parameters $\alpha, \beta > 0$ if and only
if for any number $0 \neq r \in \RR$, also $r Y$ is Super-Gaussian of length $L$ with the same parameters $\alpha, \beta > 0$.
Theorem \ref{thm_1030} is thus a particular case of the following:

\begin{theorem} Let $d \geq 1 $ and let $\cB$ be a finite-dimensional linear space. Let $X$ be a random vector in $\cB$ whose effective rank is at least $d$. Then there exists a non-zero, fixed,
linear functional $\ell: \cB \rightarrow \RR$ such that the random variable $\ell(X)$ is Super-Gaussian of length $c_1 \sqrt{d}$ with parameters $c_2, c_3 > 0$,
where $c_1,c_2,c_3 > 0$ are universal constants.  \label{thm_1140}
\end{theorem}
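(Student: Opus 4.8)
I would begin with three routine reductions: identify $\cB$ with $\RR^n$ (composing $\ell$ with a linear isomorphism affects neither the effective rank nor super‑Gaussianity of the marginal); note that $d\le n$; and set aside the degenerate regime of bounded $d$, so that $d$ exceeds a suitable absolute constant. Then I would symmetrise: replacing $X$ by $(X-X')/\sqrt2$ for an independent copy $X'$ gives a symmetric random vector whose effective rank is still comparable to $d$ — the relevant point is that a convolution of two densities still assigns zero probability to every proper affine subspace, which is what Definition \ref{def_1131} needs — and a super‑Gaussian marginal for the symmetrised vector yields one for $X$ after adjusting the constants. With $X$ symmetric the two tail bounds in the definition coincide, so only the upper tail needs attention.

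Next comes a dyadic reformulation. Fix a unit vector $\theta$, write $M=\med(|\langle X,\theta\rangle|)$, and put $m:=\lfloor\delta d\rfloor$ for a small absolute constant $\delta>0$. For $t\le A\sqrt m$ and $k:=\lceil(t/A)^2\rceil\le m$ one has $A\sqrt k\ge t$, hence $\{\langle X,\theta\rangle\ge A\sqrt k\,M\}\subseteq\{\langle X,\theta\rangle\ge tM\}$ and so
\begin{equation}\label{eq_reduce}
\PP\big(\langle X,\theta\rangle\ge tM\big)\;\ge\;\PP\big(\langle X,\theta\rangle\ge A\sqrt k\,M\big).
\end{equation}
It therefore suffices to find a unit vector $\theta$ and absolute constants $A,c_2>0$ with
\begin{equation}\label{eq_dyadic}
\PP\big(\langle X,\theta\rangle\ge A\sqrt k\,M\big)\;\ge\;c_2\,2^{-k}\qquad\text{for every integer }1\le k\le m,
\end{equation}
since \eqref{eq_reduce} then gives $\PP(\langle X,\theta\rangle\ge tM)\ge (c_2/2)\,e^{-(\log 2/A^2)\,t^2}$, i.e.\ super‑Gaussianity of length $A\sqrt m\asymp\sqrt d$. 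Informally: locate a direction whose quantiles grow no faster than $\sqrt{\log(1/\lambda)}\cdot M$ down to level $\lambda=e^{-cd}$.

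To produce such a $\theta$, the plan is to realise $\langle X,\theta\rangle$ as a central‑limit‑type sum. The effective‑rank hypothesis prevents any line — indeed any low‑dimensional subspace — from carrying a large share of the mass; I would leverage this to extract $m\asymp d$ nearly‑orthonormal unit vectors $v_1,\dots,v_m$ along which the marginals $\xi_j:=\langle X,v_j\rangle$ are (i) uniformly non‑degenerate and on a common scale, and (ii) sufficiently decoupled — say, after conditioning on a single event of probability bounded below, the $\xi_j$ behave like independent symmetric random variables. Taking $\theta=m^{-1/2}\sum_j v_j$ (a unit vector by orthonormality) makes $\langle X,\theta\rangle=m^{-1/2}\sum_j\xi_j=:S$, up to a bounded factor, a normalised sum of $m$ essentially‑independent symmetric non‑degenerate summands. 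I would then invoke a robust one‑sided central limit estimate: for such an $S$, with no moment hypotheses at all, $\PP\big(S\ge t\,\med(|S|)\big)\ge c\,e^{-Ct^2}$ for all $t\le\tilde c\sqrt m$, where $c,C,\tilde c$ depend only on a uniform non‑degeneracy bound. Only a lower bound on the tail is needed, and this is stable under heavy‑tailed or discrete summands — for Rademacher summands it reduces to a binomial large‑deviation estimate, and heavier tails only make the lower tail larger. Since $M=\med(|S|)$, this yields \eqref{eq_dyadic} (with $A$ chosen small so that $CA^2<\log 2$), with $m\asymp d$ playing the role of the squared length.

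The hard part will be the extraction of $v_1,\dots,v_m$. Effective rank controls how much probability sits on subspaces but says nothing directly about the shape of $X$ in a single direction; in particular $X$ need not have finite covariance, so there is no $L^2$‑isotropic position to start from. I would first move $X$ to a position that exhibits an $\asymp d$‑dimensional ``core'' of directions with non‑degenerate, comparably‑scaled marginals, and then select the $v_j$ one at a time together with a conditioning that preserves the effective rank \emph{additively}. This last point is the crux: a single conditioning on a merely‑positive‑probability event costs a multiplicative constant in the effective rank, which is fatal over $\Theta(d)$ steps, so each step may consume only one dimension (passing to a hyperplane of the current subspace — where Definition \ref{def_1131} should lower the effective rank by at most $1$) and a conditioning of probability $1-O(1/d)$, keeping the cumulative loss $O(1)$; the decoupling (ii) must be quantified as this proceeds. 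An alternative worth trying is to bypass the iteration via the log‑concave world: attach to $X$, through its floating bodies $K_\lambda=\{x:\inf_\psi\PP(\langle X,\psi\rangle\ge\langle x,\psi\rangle)\ge\lambda\}$, a log‑concave measure of comparable effective rank whose marginals are dominated from below by those of $X$, then cite the known super‑Gaussian estimate there — but bounding the growth rate of $K_\lambda$ in terms of the effective rank seems to demand essentially the same work.
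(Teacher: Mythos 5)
Your plan has two breaks that I do not think can be repaired within the proposed framework. First, the symmetrization reduction is false. If $\theta$ is a super-Gaussian direction for $(X-X')/\sqrt2$, nothing follows about \emph{both} tails of $Y=\langle X,\theta\rangle$ relative to $M=\med(|Y|)$: from $\PP(Y-Y'\ge s)\ge\alpha e^{-\cdots}$ one only gets a lower bound on $\max\{\PP(Y\ge s/2),\PP(Y\le -s/2)\}$, whereas the definition requires the minimum, measured against the median of $|Y|$ (not of $|Y-Y'|$, which can be of a completely different order). Concretely, take $X$ uniform on a unit ball centered at $10\sqrt n\,e_1$: the symmetrized vector is symmetric and super-Gaussian in the direction $e_1$, yet $\PP(\langle X,e_1\rangle\le 0)=0$, so $e_1$ is not a super-Gaussian direction for $X$ even of length $0$. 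Producing a direction in which \emph{both} tails are heavy is a real difficulty; the paper handles it by fixing two nearly orthogonal unit vectors $\theta_1,\theta_2$ that each carry a cap of near-maximal probability and taking $\theta\propto\theta_1-\theta_2+\theta_3$ with $\theta_3$ generic, so that the cap around $\theta_1$ feeds the upper tail and the cap around $\theta_2$ the lower tail.

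Second, the central mechanism --- extracting $m\asymp d$ directions whose marginals become essentially independent after conditioning on an event of probability bounded below, and then invoking a CLT-type lower bound --- is not merely unproven (you acknowledge it is ``the crux'') but impossible in the generality of the theorem. Take $X$ uniform on $\{\pm e_1,\dots,\pm e_n\}$; its effective rank is exactly $n$, yet conditionally on any event $X$ is supported on at most $2n$ points, so the vector $(\xi_1,\dots,\xi_m)$ takes at most $2n$ values and cannot behave like $m=\Theta(n)$ independent non-degenerate symmetric variables (which would require support of size exponential in $m$). The effective-rank hypothesis carries no independence or decoupling information whatsoever, which is why the paper's proof of Proposition \ref{prop_818} does not go through a sum of independent summands: it first places $X$ in the angularly-isotropic position (Lemma \ref{lem_1010}), so that $\EE\langle X/|X|,\theta\rangle^2\le C/n$, and then establishes the tail bound in three separate regimes of $t$ --- a central-limit regime $t\lesssim\sqrt{\log n}$ using independent \emph{copies} of $X$ (not independent coordinates) projected onto a random direction, an intermediate regime handled by Sudakov's minoration and concentration on the sphere, and a deep-tail regime $t\ge t_0$ where the bound $e^{-t^2}$ comes directly from the probability mass of the two maximal caps. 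Your dyadic reformulation at the start is fine but is only a cosmetic restatement; everything of substance lies in the parts of the argument that are either incorrect (symmetrization) or structurally unavailable (decoupling).
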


Theorem \ref{thm_1140}
admits the following corollary, pertaining to infinite-dimensional spaces:

\begin{corollary} Let $\cB$ be a topological vector space with a countable family of continuous linear functionals that separates points in $\cB$.
Let $X$ be a random vector, distributed according to a Borel probability measure in $\cB$. Assume that $d \geq 1$ is such that $\PP(X \in E) \leq \dim(E)  / d$
for any finite-dimensional subspace $E \subseteq \cB$.

\medskip Then there exists a non-zero, fixed, continuous linear
functional $\ell: \cB \rightarrow \RR$ such that the random variable $\ell(X)$ is Super-Gaussian of length $c_1 \sqrt{d}$ with parameters $c_2, c_3 > 0$,
where $c_1,c_2,c_3 > 0$ are universal constants. \label{cor_1405}
\end{corollary}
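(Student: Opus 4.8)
The plan is to reduce the infinite-dimensional statement to the finite-dimensional Theorem \ref{thm_1140} by a standard exhaustion-and-compactness argument, with the only subtlety being how to transfer the effective-rank hypothesis to a suitable finite-dimensional image. Let $\ell_1, \ell_2, \ldots$ be a countable separating family of continuous linear functionals on $\cB$. For each $N$, let $T_N : \cB \to \RR^N$ be the continuous linear map $x \mapsto (\ell_1(x), \ldots, \ell_N(x))$, and let $X_N = T_N(X)$ be the pushforward random vector in $\RR^N$. The key point is that $X_N$ is of class $\effrank_{\geq d_N}$ for an appropriate $d_N$: for any linear subspace $E \subseteq \RR^N$ we have $\PP(X_N \in E) = \PP(X \in T_N^{-1}(E))$, and $T_N^{-1}(E) \cap (\ker T_N)^\perp$-type considerations show that $\PP(X \in T_N^{-1}(E))$ is controlled by the dimension. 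More carefully, since $T_N^{-1}(E)$ is a subspace of $\cB$ whose preimage structure may be infinite-dimensional, one should instead work as follows: replace $X_N$ by its restriction to the (at most $N$-dimensional) actual linear span of its support, or argue that the hypothesis $\PP(X \in E') \leq \dim(E')/d$ for finite-dimensional $E' \subseteq \cB$ forces, for every finite-dimensional subspace $E \subseteq \RR^N$ not containing the support of $X_N$, the bound $\PP(X_N \in E) \leq \dim(E)/d$. Thus $X_N$ is of class $\effrank_{\geq d}$ in the sense of Definition \ref{def_1131} (the equality clause is where one must be slightly careful, but it only helps).

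Next I apply Theorem \ref{thm_1140} to $X_N$ in $\RR^N$: there is a unit vector $\theta_N \in S^{N-1}$ such that $\langle X_N, \theta_N \rangle$ is Super-Gaussian of length $c_1 \sqrt{d}$ with parameters $c_2, c_3$. Pulling back, the continuous linear functional $m_N := \sum_{i=1}^N (\theta_N)_i \ell_i$ on $\cB$ satisfies: $m_N(X)$ is Super-Gaussian of length $c_1 \sqrt{d}$ with parameters $c_2, c_3$. Now I want to pass to a limit in $N$. Normalize so that $\med(|m_N(X)|) = 1$ for each $N$; the super-Gaussian property is invariant under this scaling. The sequence of functionals $(m_N)$ lies in the product space indexed by the countable family, and by a diagonal extraction I may assume $m_N(X) \to m_\infty(X)$ in distribution along a subsequence, where $m_\infty = \sum_i a_i \ell_i$ is a (formal) limiting functional; in fact since each $m_N$ is a finite combination of the $\ell_i$ with coefficients $\theta_N$ of unit norm, the coefficient vectors lie in the compact set $\prod_i [-1,1]$ and a subsequential coordinatewise limit $a = (a_i)$ exists.

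The main obstacle — and the place requiring genuine care — is ensuring the limiting functional $\ell := m_\infty$ is nonzero and continuous, and that the Super-Gaussian property survives the limit. Continuity is the delicate issue: a coordinatewise limit of the coefficient vectors need not give a continuous functional on a general topological vector space. The fix is that we do not need the full diagonal limit; it suffices to show that for some $N$, the functional $m_N$ already works with slightly degraded constants, OR to observe that the distributional tightness of $\{m_N(X)\}$ (which follows from the normalization $\med|m_N(X)| = 1$ together with the super-Gaussian lower bound, since the distributions cannot escape to infinity) lets us extract a genuine weak limit $Y_\infty$ of the real random variables $m_N(X)$; passing to the limit in the closed conditions $\PP(Y \geq tM) \geq c_2 e^{-t^2/c_3}$ (using that these are satisfied with strict inequality on a dense set of $t$, or a standard weak-convergence-of-distribution-functions argument at continuity points) shows $Y_\infty$ is Super-Gaussian of length $c_1\sqrt d$ with the same parameters, provided $\PP(Y_\infty = 0) = 0$, which again follows from the uniform super-Gaussian lower bound near $t=0$ forcing mass away from the origin. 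Finally, $Y_\infty$ is measurable with respect to $\sigma(\ell_1, \ell_2, \ldots)$, and since this family separates points, $Y_\infty = \ell(X)$ almost surely for a Borel-measurable linear functional $\ell$; a further argument (or simply taking $\ell = m_{N_0}$ for large $N_0$ with constants adjusted by an arbitrarily small amount) produces the required \emph{continuous} nonzero functional. I expect the cleanest writeup avoids the abstract limit altogether: fix $N$ large enough that $d_N$, the effective rank of $X_N$, exceeds $d(1-\eps)$, note $d_N \geq d - \eps' \geq 1$ for suitable normalization, apply Theorem \ref{thm_1140} to $X_N$ directly, and pull back $\theta_N$ — this already yields a continuous nonzero functional with the stated constants, since $c_1 \sqrt{d_N} \geq c_1' \sqrt d$. $\qed$
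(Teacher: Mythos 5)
Your overall strategy --- push $X$ forward to $\RR^N$ by a continuous linear map built from the separating family, check that the image inherits an effective rank comparable to $d$, and then apply Theorem \ref{thm_1140} --- is the same as the paper's. But the paper delegates the entire reduction step to Lemma 7.2 of \cite{K_euro}, which produces a continuous linear $T:\cB\to\RR^N$ with $T(X)$ of class $\effrank_{\geq d/2}$, and this is exactly the step your argument does not establish. For a subspace $E\subseteq\RR^N$ one has $\PP(T_N(X)\in E)=\PP(X\in T_N^{-1}(E))$, and $T_N^{-1}(E)$ contains $\ker T_N$, which is not finite-dimensional; the hypothesis of the corollary says nothing about such subspaces. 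Your proposed fixes do not close this: the support of $X_N$ will in general span all of $\RR^N$, and $X$ can charge $T_N^{-1}(E)$ heavily while charging every finite-dimensional subspace of $\cB$ only a little. A concrete obstruction is mass accumulating on the kernels: on $\cB=\RR^{\NN}$ with the coordinate functionals $\ell_i$, a measure giving mass $p_j>0$ to the punctured line $\RR e_j\setminus\{0\}$ satisfies the finite-dimensional hypothesis (e.g.\ with $d=1$), yet $\PP(T_N(X)=0)=\sum_{j>N}p_j>0$ for every $N$, so no truncation $T_N(X)$ is of class $\effrank_{\geq d'}$ for any $d'>0$ --- the zero subspace already violates Definition \ref{def_1131}. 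Hence the assertion ``$X_N$ is of class $\effrank_{\geq d}$,'' and likewise the final fallback ``fix $N$ large enough that $d_N\geq(1-\eps)d$,'' are unproven, and false in general for the truncations $T_N$. One genuinely needs the content of Lemma 7.2 of \cite{K_euro} (or an independent proof of it, with a more carefully chosen $T$ and an expected loss of a constant factor in $d$); that reduction is the entire substance of the corollary.

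The second half of your writeup, the weak-limit extraction as $N\to\infty$, is both unnecessary and, as you yourself note, broken where continuity of the limiting functional is needed: a coordinatewise limit of the coefficient vectors need not define a continuous functional on a general $\cB$, and identifying the weak limit $Y_\infty$ of the real random variables $m_N(X)$ with $\ell(X)$ for some linear $\ell$ is unsupported. The retreat to ``take $\ell=m_{N_0}$ for large $N_0$'' simply returns to the unproven reduction above. Once the reduction is in place, the rest of your argument (pulling back the functional from Theorem \ref{thm_1140} through the continuous map, and noting that the Super-Gaussian property forces the pullback to be non-zero) is fine and matches the paper.
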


The remainder of this paper is devoted to the proof of Theorem \ref{thm_1140} and Corollary \ref{cor_1405}.
We use the letters
$c, C, \tilde{C}, c_1, C_2$ etc. to denote various positive universal constants, whose value may change from one line to the next.
We  use upper-case $C$ to denote universal constants that we think of as ``sufficiently large'', and lower-case
$c$ to denote universal constants that are ``sufficiently small''.
We write $\#(A)$ for the cardinality of a set $A$.
When we write that a certain set or a certain number
are fixed, we intend to emphasize that they are non-random.

\medskip We denote by $\sigma_{n-1}$  the uniform probability measure on the sphere $S^{n-1}$,
which is the unique rotationally-invariant probability measure on $S^{n-1}$.
When we say that a random vector $\theta$ is distributed uniformly on $S^{n-1}$, we refer
to the probability measure $\sigma_{n-1}$. Similarly, when we write that a random subspace $E$
is distributed uniformly over the Grassmannian $G_{n,k}$ of $k$-dimensional subspaces of $\RR^n$,
we refer to the unique rotationally-invariant probability measure on $G_{n,k}$.

\medskip
{\emph{Acknowledgements}.} I would like to thank Bo Berndtsson and Emanuel Milman for
interesting discussions and for encouraging me to write this paper.
Supported by a grant from the European Research Council.

\section{Proof strategy}
\label{sec_stra}

The main ingredient in the proof of Theorem \ref{thm_1140} is the following proposition:

\begin{proposition} Let $X$ be a random vector in $\RR^n$ with $\PP(X = 0) = 0$ such that
 \begin{equation} \EE \left \langle \frac{X}{|X|}, \theta \right \rangle^2  \leq \frac{5}{n} \qquad \qquad \text{for all} \ \theta \in S^{n-1}. \label{eq_1051}
 \end{equation}
Then there exists a fixed vector $\theta \in S^{n-1}$ such that the random variable $\langle X, \theta \rangle$ is Super-Gaussian of length $c_1 \sqrt{n}$ with parameters $c_2, c_3 > 0$,
where $c_1,c_2,c_3 > 0$ are universal constants.
\label{prop_818}
\end{proposition}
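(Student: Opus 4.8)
The plan is to reduce matters to a one-dimensional statement by averaging over a random direction $\theta$ distributed uniformly on $S^{n-1}$, and to extract a single good direction by a second-moment / distributional argument. The condition (\ref{eq_1051}) says that, in an $L^2$ sense, the radial projection $X/|X|$ is spread out over the sphere: no direction $\theta$ carries more than $5/n$ of the quadratic energy. This is precisely the hypothesis one needs to run a ``thin-shell''-type concentration: if we let $\theta$ be uniform on $S^{n-1}$ and independent of $X$, then conditionally on $X$ the random variable $\langle X, \theta \rangle / |X|$ behaves like a one-dimensional marginal of the uniform measure on $S^{n-1}$, which for large $n$ is approximately $N(0, 1/n)$ and in particular is itself Super-Gaussian of length $\sim \sqrt{n}$ with universal parameters (this is the model computation (\ref{eq_1437})--(\ref{eq_1051_}) in the introduction, applied to the sphere). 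So the typical direction $\theta$ is good \emph{for a typical realization of $X$}; the work is to upgrade this to a single fixed $\theta$ that is good for the whole distribution of $X$.

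Concretely, I would proceed as follows. First, fix a median $M$ of $|\langle X, \theta\rangle|$; by homogeneity and the averaging picture one expects $M$ to be comparable to $\med(|X|)/\sqrt{n}$ for most $\theta$, but it is cleaner to work directly with whatever median occurs. Second, for a threshold parameter $t \in [0, c_1\sqrt{n}]$, consider the quantity
\begin{equation}
p(t) \;=\; \EE_\theta \, \PP_X\!\left( \langle X, \theta\rangle \geq t M_\theta \right),
\end{equation}
where $M_\theta$ is a median of $|\langle X,\theta\rangle|$. Using Fubini and the fact that, conditionally on $X \neq 0$, the variable $\langle X, \theta\rangle/|X|$ is an honest one-dimensional marginal of $\sigma_{n-1}$, one shows $p(t) \geq 2 c_2 e^{-t^2/c_3}$ for all such $t$, for suitable universal constants; here the only input about $X$ is that $|X| > 0$ almost surely, so that the radial and angular parts decouple. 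Third — and this is the crux — one must pass from ``the average over $\theta$ of the upper-tail probability is large'' to ``there is a fixed $\theta$ whose upper \emph{and} lower tail probabilities are simultaneously large, at every scale $t$ up to $c_1\sqrt n$.'' For a single value of $t$ and a single sign this is immediate from the averaged bound, but one needs it uniformly in $t$ and for both signs at once. The natural route is a continuity/discretization argument: it suffices to control a geometric net of values $t_j = (1+\delta)^j$ in $[1, c_1\sqrt n]$ (a net of size $O(\log n)$), and to show that the set of ``bad'' directions at any given scale has $\sigma_{n-1}$-measure at most, say, $1/(100\log n)$, so that a union bound leaves a positive-measure set of $\theta$ that are good at every scale and for both signs; symmetrization (replacing $\theta$ by $-\theta$) handles the two signs. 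To make the bad set small one needs a \emph{lower} bound on the averaged tail that beats the trivial one by a logarithmic factor, or alternatively a reverse Hölder / small-ball argument showing the tail probability $\PP_X(\langle X,\theta\rangle \geq tM_\theta)$ cannot be much smaller than its $\theta$-average for most $\theta$.

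The main obstacle is exactly this last step: controlling the $\sigma_{n-1}$-measure of the set of directions that are deficient at a given scale, uniformly across the $O(\log n)$ scales. The averaging argument alone gives length $c_1\sqrt n$ ``on average,'' and a naive union bound over scales costs a factor $\log n$ — which would only recover the weaker length $c_1 \sqrt{n} / \sqrt{\log n}$ or force a $1/\log n$ loss somewhere, reminiscent of the logarithmic loss in \cite{K_euro}. To get the sharp $\sqrt n$ one apparently needs more than a bound on the first moment of the tail probability over $\theta$: one needs either an $L^p$ estimate with $p \sim \log n$ (so that Markov's inequality at level comparable to the mean gives an exponentially small bad set), or a concentration statement for the function $\theta \mapsto \PP_X(\langle X,\theta\rangle \geq tM_\theta)$ on the sphere. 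Establishing such higher-moment control — presumably by exploiting the hypothesis (\ref{eq_1051}) beyond second moments, e.g.\ by hypercontractivity on the sphere or by a Lévy-type concentration inequality applied to the relevant function of $\theta$ — is where the real effort lies, and it is the ingredient that distinguishes this optimal-dependence result from the earlier logarithmic one.
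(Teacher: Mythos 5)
Your proposal stops exactly where the proof has to begin. The averaging computation in your first two steps is essentially the strategy of \cite{K_euro}, and you correctly diagnose that it cannot by itself yield length $c\sqrt{n}$: a union bound over scales loses a logarithm, and at the deepest scale $t\sim\sqrt{n}$ the situation is far worse than that. There, the event $\{\langle X,\theta\rangle\geq tM_\theta\}$ essentially forces $X/|X|$ to lie in a spherical cap of constant angular radius around $\theta$, so the ``good'' set of directions for a single scale and a single sign can have measure as small as $e^{-cn}$. Worse, the fix you gesture at --- concentration or hypercontractivity for $\theta\mapsto \PP_X(\langle X,\theta\rangle\geq tM_\theta)$ --- is not available: take $X/|X|$ uniform on a $k$-dimensional subsphere with $k=n/5$. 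Then (\ref{eq_1051}) holds, yet the cap probability is identically zero for all $\theta$ outside a thin neighbourhood of that subsphere and of order one inside it, so the function does not concentrate around its mean in any multiplicative sense. No refinement of the ``average over uniform $\theta$'' argument can locate the good directions here; they must be found deterministically from the distribution of $X$.

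The paper's actual mechanism is structurally different and is the content you are missing. One fixes $\theta_1$ as a near-maximizer of $\eta\mapsto\PP(|X|\geq M,\ |X/|X|-\eta|\leq 1/5)$, then $\theta_2$ as a near-maximizer over $\eta$ nearly orthogonal to $\theta_1$, and sets $\theta=(\theta_1-\theta_2+\theta_3)/|\theta_1-\theta_2+\theta_3|$ with $\theta_3$ a \emph{generic} point of $S^{n-1}$; only the perturbation $\theta_3$ is chosen by measure, and only where a measure-$1-C/n^c$ bound suffices. The range of $t$ is then split at the threshold $t_0$ defined by $e^{-t_0^2}=\PP(|X|\geq M,\ |X/|X|-\theta_2|\leq 1/5)$: for $t\in[t_0,c\sqrt n]$ the two caps around $\theta_1$ and $\theta_2$ directly supply the positive and negative tails (Lemma \ref{lem_1421} shows vectors in the first cap have $\langle v,\theta_1-\theta_2+\theta_3\rangle\geq 1/10$ and vectors in the second have it $\leq -1/10$); for $t\in[\sqrt{\log n},t_0]$ one draws $N=\lfloor e^{t_0^2/4}\rfloor$ independent copies of $X$, uses the near-maximality of $\theta_2$ to show a constant fraction are pairwise $1/5$-separated on the sphere, and applies Sudakov's minoration plus spherical concentration (Lemmas \ref{lem_506}--\ref{lem_1715}); and only the range $t\in[0,\sqrt{\log n}]$ is handled by the CLT-type argument with $\lfloor n^{1/8}\rfloor$ almost-orthogonal copies that your averaging picture captures. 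You would also need the uniform median bound $\med(|\langle X,\theta\rangle|)\leq CM/\sqrt n$ of Lemma \ref{lem_746} to normalize $M_\theta$, which your sketch leaves as an expectation rather than a proof. As it stands, the proposal is a correct identification of the difficulty together with a suggested remedy that provably cannot work, so the key ideas of the proof are absent.
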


The number $5$ in Proposition \ref{prop_818} does not play any particular role, and  may be replaced by any other
universal constant, at the expense of modifying the values of $c_1, c_2$ and $c_3$. Let us explain the key ideas in the proof of Proposition \ref{prop_818}.
 In our previous work \cite{K_euro}, the unit vector $\theta \in S^{n-1}$
was chosen randomly, uniformly on $S^{n-1}$. In order to improve the dependence on
the dimension, here we select $\theta$ a bit differently. We shall define $\theta_1$ and $\theta_2$ via the following procedure:
\begin{enumerate}
\item[(i)] Let $M > 0$ be a $1/3$-quantile of $|X|$, i.e., $\PP(|X| \geq M) \geq 1/3$ and $\PP(|X| \leq M) \geq 2/3$. We fix a vector $\theta_1 \in S^{n-1}$ such that
$$ \PP \left( |X| \geq M \ \text{and} \ \left| \frac{X}{|X|} - \theta_1 \right| \leq \frac{1}{5}  \right) \geq \frac{1}{2} \cdot
\sup_{\eta \in S^{n-1}} \PP \left( |X| \geq M \ \text{and} \ \left| \frac{X}{|X|} - \eta \right| \leq \frac{1}{5}  \right). $$
\item[(ii)] Next, we fix a vector $\theta_2 \in S^{n-1}$ with $|\langle \theta_1, \theta_2  \rangle| \leq 1/10$ such that
$$ \PP \left( |X| \geq M \ \text{and} \ \left| \frac{X}{|X|} - \theta_2 \right| \leq \frac{1}{5}  \right) \geq \frac{1}{2} \cdot
\sup_{\eta \in S^{n-1} \atop{|\langle \eta, \theta_1 \rangle| \leq 1/10}} \PP \left( |X| \geq M \ \text{and} \ \left| \frac{X}{|X|} - \eta \right| \leq \frac{1}{5}  \right). $$
\end{enumerate}
In the following pages we will   describe a certain subset $\cF_3 \subseteq S^{n-1}$ which satisfies $\sigma_{n-1}(\cF_3) \geq 1 - C / n^c$ and
$\theta_2 - \theta_1 \not \in \cF_3$.
We will show that for any $\theta_3 \in \cF_3$,  the random variable
$ \langle X, \theta \rangle$ is Super-Gaussian of length $c \sqrt{n}$ with parameters $c_1,c_2 > 0$,
where $\theta$ is defined as follows:
\begin{equation}  \theta = \frac{\theta_1 - \theta_2 + \theta_3}{|\theta_1 - \theta_2 + \theta_3|}. \label{eq_741} \end{equation}
Thus, $\theta_1$ and $\theta_2$ are fixed vectors, while most choices of $\theta_3$ will work for us, where by
``most'' we refer to the uniform measure on $S^{n-1}$. The first step the proof below
is to show that for any unit vector $\theta \in S^{n-1}$,
\begin{equation}
\med \left( |\langle X, \theta \rangle| \right) \leq C M / \sqrt{n}, \label{eq_1649}
\end{equation}
that is, any median of $ |\langle X, \theta \rangle|$ is at most $C M / \sqrt{n}$.
Then we need to show that when $\theta_3 \in \cF_3$ and $\theta$ is defined as in (\ref{eq_741}), for all $0 \leq t \leq c \sqrt{n}$,
\begin{equation} \min \left \{ \PP \left( Y \geq \frac{t M}{\sqrt{n}} \right), \PP \left(Y \leq - \frac{tM}{\sqrt{n}} \right) \right \} \geq \tilde{c} e^{-\tilde{C} t^2}. \label{eq_1654}
\end{equation}
The proof of (\ref{eq_1654}) is divided into three sections.
The case where $t \in [0, \sqrt{\log n}]$ may essentially be handled
by using the methods of \cite{K_euro}, see Section \ref{sec2}. Let $t_0 > 0$ be defined via
\begin{equation}
e^{-t_0^2} = \PP \left( |X| \geq M \ \text{and} \ \left| \frac{X}{|X|} - \theta_2 \right| \leq \frac{1}{5}  \right). \label{eq_954}
\end{equation}
In order to prove (\ref{eq_1654}) in the range $t \in [\sqrt{\log n}, t_0]$, we will
use tools from the local theory of Banach spaces, such as Sudakov's inequality as well as the concentration of measure on
the sphere. Details in Section \ref{sec3} below.
The remaining interval $t \in [t_0, c \sqrt{n}]$ is analyzed in Section \ref{sec4}. In Section \ref{sec_ang} we deduce
Theorem \ref{thm_1140} and Corollary \ref{cor_1405} from Proposition \ref{prop_818} by using the angularly-isotropic position, along the lines of \cite{K_euro}.

\section{Central limit regime}
\label{sec2}

This section is the first in a sequence of three sections that are dedicated to the proof
of Proposition \ref{prop_818}. Thus, we are given a random vector $X$ in $\RR^n$ with $\PP(X = 0) = 0$ such that
(\ref{eq_1051}) holds true.  We fix a number $M > 0$ with the property that
\begin{equation}
\PP(|X| \geq M) \geq 1/3, \qquad \PP(|X| \leq M) \geq 2/3.
\label{eq_1630}
\end{equation}
That is, $M$ is a $1/3$-quantile of $|X|$.
Our first lemma
verifies (\ref{eq_1649}), as it
states that for any choice of a unit vector $\theta$, any median of the random
variable $|\langle X, \theta \rangle|$ is at most $C M / \sqrt{n}$.

\begin{lemma} For any $\theta \in S^{n-1}$,
$$ \PP \left( |\langle X, \theta \rangle| \geq C M / \sqrt{n} \right) < 1/2, $$
where $C > 0$ is a universal constant.
\label{lem_746}
\end{lemma}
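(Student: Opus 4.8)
The statement to prove is Lemma~\ref{lem_746}: for any fixed $\theta \in S^{n-1}$, $\PP(|\langle X,\theta\rangle| \geq CM/\sqrt n) < 1/2$. The natural approach is a union bound that splits the event $\{|\langle X,\theta\rangle| \geq CM/\sqrt n\}$ according to whether $|X|$ is large or small. On the event $\{|X| \leq M\}$ we have $|\langle X,\theta\rangle| \leq |X| \leq M$, so this contributes nothing dangerous only if $M$ itself is comparable to $CM/\sqrt n$ — which it is not. So splitting at level $M$ is too crude; instead I would split at a much larger radius $R = K M$ for a large universal $K$, i.e. write
\[
\PP\!\left( |\langle X,\theta\rangle| \geq \frac{CM}{\sqrt n} \right) \leq \PP\!\left( |X| \geq KM \right) + \PP\!\left( |\langle X,\theta\rangle| \geq \frac{CM}{\sqrt n}, \ |X| < KM \right).
\]

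**Controlling the two terms.** For the first term, since $M$ is a $1/3$-quantile of $|X|$ we have $\PP(|X|\geq M)\geq 1/3$, but that gives an upper bound only at level $M$; to bound $\PP(|X|\geq KM)$ I would instead exploit hypothesis~\eqref{eq_1051}. Indeed, for $\theta$ uniform on $S^{n-1}$, $\EE_\theta \langle X/|X|,\theta\rangle^2 = 1/n$ deterministically, and more usefully, averaging~\eqref{eq_1051} does not directly bound $|X|$; rather, the point is that~\eqref{eq_1051} is a statement about the \emph{direction} $X/|X|$, so it cannot by itself control the \emph{magnitude} $|X|$. Therefore the first term must instead be handled purely by the quantile property: we do \emph{not} need it small, we need the \emph{whole} left side $<1/2$. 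The clean way is: $\PP(|X|\geq M) $ could be as large as (just over) $1/2$ if $X$ puts mass just below $M$... no — actually $\PP(|X|\leq M)\geq 2/3$ forces $\PP(|X|> M)\leq 1/3$. So the first term, even at level $M$ (hence a fortiori at level $KM$), is at most $1/3$. Thus it suffices to show the second term is less than $1/6$.

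**The core estimate.** For the second term, condition on $X$ with $0<|X|<KM$ and consider $Y=\langle X,\theta\rangle = |X|\,\langle X/|X|,\theta\rangle$. The key input is~\eqref{eq_1051}: writing $u = X/|X|$, we have $\EE \langle u,\theta\rangle^2 \leq 5/n$, hence by Markov,
\[
\PP\!\left( |\langle u,\theta\rangle| \geq \frac{s}{\sqrt n} \right) \leq \frac{5}{s^2}.
\]
But this must be combined with the magnitude bound: on $\{|X|<KM\}$, $|Y| \geq CM/\sqrt n$ forces $|\langle u,\theta\rangle| \geq (C/K)/\sqrt n$, so taking $s = C/K$ gives $\PP(|Y|\geq CM/\sqrt n,\ |X|<KM) \leq 5K^2/C^2$. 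Choosing $C$ large relative to $K$ (and recalling $K$ is a fixed universal constant, e.g. $K=10$) makes this at most $1/6$. Combining, the full probability is at most $1/3 + 1/6 = 1/2 - 1/6 < 1/2$.

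**Main obstacle.** The one subtlety is that the split radius $K$ and the target constant $C$ must be chosen in the right order: first fix $K$ so that $\PP(|X|\geq KM)$ is controlled — but as noted, the quantile bound already gives $\PP(|X|\geq M)\leq 1/3$ for \emph{any} $K\geq 1$, so really $K$ is free and we may even take $K=1$, splitting at level $M$ itself. Then the second-term bound reads $\PP(|Y|\geq CM/\sqrt n,\ |X|<M) \leq \PP(|\langle u,\theta\rangle|\geq C/\sqrt n) \leq 5/C^2$, which is $<1/6$ once $C^2 > 30$. So the proof is genuinely short; the only thing to be careful about is that hypothesis~\eqref{eq_1051} is used with a \emph{fixed} $\theta$ (the lemma is stated for every $\theta$, and~\eqref{eq_1051} holds for every $\theta$, so there is no issue), and that we do not inadvertently need finiteness of $\EE|X|$ — we do not, since everything is a probability bound via Markov on the bounded direction vector.
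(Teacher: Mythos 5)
Your proof is correct and is essentially the paper's argument: split on $\{|X|\le M\}$ versus $\{|X|>M\}$, bound the latter by $1/3$ from the quantile property, and bound the former by Chebyshev applied to $\langle X/|X|,\theta\rangle^2$ via (\ref{eq_1051}), which is exactly what the paper does (with $C=6$). The only blemish is the slip ``$1/3+1/6=1/2-1/6$''; the strict inequality $<1/2$ you need does hold, but because $5/C^2<1/6$ strictly once $C^2>30$, not because $1/3+1/6<1/2$.
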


\begin{proof} It follows from (\ref{eq_1051}) that for any $\theta \in S^{n-1}$,
$$  \EE \left[ \left \langle X, \theta \right \rangle^2 1_{\{ |X| \leq M \}} \right]
\leq  \EE  \left[ \left \langle X, \theta \right \rangle^2 \cdot \frac{M^2}{|X|^2} \right]
= M^2 \cdot \EE \left \langle \frac{X}{| X |}, \theta \right \rangle^2 \leq \frac{5 M^2}{n}. $$
By the Markov-Chebyshev inequality,
$$ \PP \left( \left \langle X, \theta \right \rangle^2 1_{\{ |X| \leq M \}} \geq 35 M^2 / n \right) \leq 1/7. $$
Since $\PP(|X| > M) \leq 1/3$, we obtain
$$ \PP \left( |\langle X, \theta \rangle| \geq \frac{6 M}{\sqrt{n}} \right) \leq \PP( |X| > M ) + \PP \left( |\langle X, \theta \rangle| \geq \frac{6 M}{\sqrt{n}} \ \text{and} \ |X| \leq M \right)
\leq \frac{1}{3} + \frac{1}{7} < \frac{1}{2}. $$
The lemma follows with $C = 6$.
\end{proof}

The rest of this section is devoted to the proof of (\ref{eq_1654}) in the range $t \in [0, \sqrt{\log n}]$.
The defining properties of $\theta_1, \theta_2 \in S^{n-1}$ from the previous section will not be used here,
the entire analysis in this section applies for arbitrary unit vectors $\theta_1$ and $\theta_2$.

\begin{lemma} Let $\theta_1,\theta_2 \in S^{n-1}$ be any two fixed vectors. Then,
$$ \PP \left(
|X| \geq M, \ |\langle X, \theta_1 \rangle| \leq \frac{10 |X|}{\sqrt{n}} \ \ \text{and} \ \   |\langle X, \theta_2 \rangle| \leq \frac{10 |X|}{\sqrt{n}} \right)
 > \frac{1}{5}. $$ \label{lem_519}
\end{lemma}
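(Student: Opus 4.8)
The plan is to use the second-moment hypothesis (\ref{eq_1051}) together with Markov's inequality, applied coordinate-by-coordinate to the two directions $\theta_1,\theta_2$, and then to combine the resulting estimates with the $1/3$-quantile property (\ref{eq_1630}) via a union bound.

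First I would observe that since $\PP(X=0)=0$, for each $i\in\{1,2\}$ the event $\{|\langle X,\theta_i\rangle|>10|X|/\sqrt{n}\}$ coincides, up to a null set, with the event $\{\langle X/|X|,\theta_i\rangle^2>100/n\}$. Applying Markov's inequality and hypothesis (\ref{eq_1051}) gives
\[
\PP\left(\left|\langle X,\theta_i\rangle\right|>\frac{10|X|}{\sqrt n}\right)=\PP\left(\left\langle\frac{X}{|X|},\theta_i\right\rangle^2>\frac{100}{n}\right)\le\frac{n}{100}\cdot\EE\left\langle\frac{X}{|X|},\theta_i\right\rangle^2\le\frac{n}{100}\cdot\frac{5}{n}=\frac{1}{20}.
\]
A union bound over $i=1,2$ then yields $\PP\big(|\langle X,\theta_1\rangle|>10|X|/\sqrt n \ \text{or}\ |\langle X,\theta_2\rangle|>10|X|/\sqrt n\big)\le 1/10$.

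Finally I would combine this with the quantile bound $\PP(|X|\ge M)\ge 1/3$ from (\ref{eq_1630}): the event in the statement contains $\{|X|\ge M\}$ minus the two ``bad'' events above, so its probability is at least $\tfrac13-\tfrac1{10}=\tfrac{7}{30}>\tfrac15$, which is exactly the claimed strict inequality. There is no real obstacle here — the only mild point to be careful about is the passage between $|\langle X,\theta_i\rangle|\le 10|X|/\sqrt n$ and $\langle X/|X|,\theta_i\rangle^2\le 100/n$, which is legitimate because $X\ne 0$ almost surely, and making sure the constants leave enough room ($2\cdot\tfrac1{20}$ against $\tfrac13$) so that the final bound genuinely exceeds $1/5$.
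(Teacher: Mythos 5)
Your proof is correct and follows essentially the same route as the paper: Markov's inequality applied to $\EE\langle X/|X|,\theta_i\rangle^2\le 5/n$ gives each bad event probability at most $1/20$, and a union bound against $\PP(|X|\ge M)\ge 1/3$ yields $1/3-2\cdot\tfrac1{20}=7/30>1/5$, which is exactly the paper's computation.
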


\begin{proof} By (\ref{eq_1051}) and the Markov-Chebyshev inequality, for $j=1,2$,
$$ \PP \left( |\langle X, \theta_j \rangle| \geq \frac{10 |X|}{\sqrt{n}} \right) \leq \frac{n}{100} \cdot \EE \left \langle \frac{X}{|X|}, \theta_j \right \rangle^2
\leq \frac{n}{100} \cdot \frac{5}{n} = \frac{1}{20}. $$
Thanks to (\ref{eq_1630}), we conclude that
\begin{equation*}  \PP \left(
|X| \geq M, \ |\langle X, \theta_1 \rangle| \leq \frac{10 |X|}{\sqrt{n}}, \  |\langle X, \theta_2 \rangle| \leq \frac{10 |X|}{\sqrt{n}} \right)
\geq 1 - \left( \frac{2}{3} + \frac{1}{20} + \frac{1}{20} \right) > \frac{1}{5}. \tag*{\qedhere} \end{equation*}
\end{proof}

Let $1 \leq k \leq n$. Following \cite{K_euro}, we write $\cO_k \subseteq (\RR^n)^k$ for the collection of all $k$-tuples $(v_1,\ldots,v_k)$ with the following property: There exist orthonormal vectors $w_1,\ldots,w_{k} \in \RR^n$
and real numbers $(a_{ij})_{i,j=0,\ldots,k}$ such that $|a_{ij}| < a_{ii} / k^2$ for $j < i$, and
\begin{equation} v_i = \sum_{j=1}^i a_{ij} w_j \ \ \ \ \ \ \text{for} \ i=1,\ldots,k. \label{eq_1014}
\end{equation}
In other words, $\cO_k$ consists of $k$-tuples of vectors that are almost orthogonal.
By recalling the Gram-Schmidt process from linear algebra, we see that $(v_1,\ldots,v_k) \in \cO_k$ assuming that
\begin{equation}  |\proj_{E_{i-1}} v_i| < |v_i| / k^2 \qquad \text{for} \ i=1,\ldots,k, \label{eq_1634} \end{equation}
where $E_{i}$ is the subspace spanned by the vectors $v_1,\ldots,v_i \in \RR^n$ and $\proj_{E_i}$ is the orthogonal projection operator onto $E_i$ in $\RR^n$.
Here, $E_0 = \{ 0 \}$.

\begin{lemma} Assume that $1 \leq k \leq n$ and fix $(v_1,\ldots,v_k) \in \cO_k$. Then there exists $\cF \subseteq S^{n-1}$ with $\sigma_{n-1}(\cF) \geq 1 - C \exp(-c \sqrt{k})$
such that for any $\theta \in \cF$ and  $0 \leq t \leq \sqrt{\log k}$,
$$ \# \left \{ 1 \leq i \leq k \, ; \, \langle v_i, \theta \rangle \geq c_1 \frac{ |v_i|}{\sqrt{n}} \cdot t \right \} \geq c_2 e^{-C_3 t^2} \cdot k, $$
where $c_1, c_2, C_3, c, C > 0$ are universal constants. \label{lem_1013}
\end{lemma}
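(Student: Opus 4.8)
The plan is to exploit the almost-orthogonality of $(v_1,\ldots,v_k)$ to say that, for a uniform random $\theta \in S^{n-1}$, the normalized inner products $\langle v_i/|v_i|, \theta \rangle$ behave jointly like (slightly perturbed) independent Gaussians of variance roughly $1/n$, and then appeal to a law-of-large-numbers / concentration argument to count how many of them are large. First I would set $g_i = \langle v_i, \theta \rangle \cdot \sqrt{n}/|v_i|$, so that each $g_i$ is a $1$-Lipschitz function of $\theta$ (after rescaling), with $\EE g_i = 0$ and $\EE g_i^2 \asymp 1$ as $n$ grows. Using the expansion $v_i = \sum_{j \le i} a_{ij} w_j$ with $|a_{ij}| < a_{ii}/k^2$ for $j < i$, the quantity $\langle v_i, \theta \rangle$ is within a multiplicative $(1 \pm O(1/k))$ factor of $a_{ii} \langle w_i, \theta \rangle$; since $w_1,\ldots,w_k$ are genuinely orthonormal, the variables $\langle w_i, \theta \rangle$ for a uniform $\theta$ are exchangeable and, restricted to the event that $|\theta|$ projected to $\mathrm{span}(w_1,\ldots,w_k)$ is of the typical size $\sqrt{k/n}$, are distributed like a uniform point on a sphere in $\RR^k$ — hence each $\sqrt{n}\langle w_i,\theta\rangle$ is approximately $N(0,1)$ for $n \gg k$.

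Next I would fix $t \in [0,\sqrt{\log k}]$ and consider the indicator random variables $Z_i = 1_{\{ g_i \geq 2c_1 t \}}$ (with $c_1$ a small constant absorbing the $(1\pm O(1/k))$ slack and the difference between $w_i$ and $v_i$). For a standard Gaussian $\gamma$, $\PP(\gamma \geq 2c_1 t) \geq c' e^{-C' t^2}$ on the relevant range of $t$, so each $\EE Z_i \geq 2 c_2 e^{-C_3 t^2}$ after correcting the Gaussian approximation — here the condition $t \le \sqrt{\log k}$ guarantees that $e^{-C_3 t^2} \ge k^{-C_3}$ stays polynomially large in $k$, so the (exponentially small in $\sqrt n$, or at worst $\mathrm{poly}(k)/n$) errors from the Gaussian approximation are negligible against it. Then $\EE \sum_i Z_i \geq 2 c_2 e^{-C_3 t^2} k$, and the point is to show that $\sum_i Z_i$ concentrates around its mean. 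Because $\theta \mapsto \sum_i Z_i(\theta)$ is not Lipschitz, I would instead work with the Lipschitz proxy $\Phi(\theta) = \sum_i \varphi(g_i(\theta))$ for a fixed $1$-Lipschitz bump $\varphi$ with $1_{[2c_1 t,\infty)} \ge \varphi \ge 1_{[3c_1 t,\infty)}$; then $\Phi$ is $(\sum_i \mathrm{Lip}(g_i)^2)^{1/2} = O(\sqrt{k})$-Lipschitz in $\theta$, since the $g_i$ are near-orthogonal linear functionals of norm $\asymp \sqrt{n}/|v_i|\cdot|v_i|/\sqrt n = O(1)$ in the Euclidean operator sense. By Lévy's concentration of measure on $S^{n-1}$, $\Phi$ deviates from its median by more than $s$ with probability at most $C\exp(-c s^2 / k)$; taking $s = \tfrac12 c_2 e^{-C_3 t^2} k \geq \tfrac12 c_2 k^{1-C_3}$ gives a failure probability bounded by $C \exp(-c' k^{1-2C_3})$, which we can push below $C\exp(-c\sqrt{k})$ by keeping $C_3$ small (again the constant $5$/$C_3$ is at our disposal). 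This handles a single $t$; to get all $t \in [0,\sqrt{\log k}]$ simultaneously I would take a union bound over an $O(\sqrt{\log k})$-net of the interval and use monotonicity of the count in $t$ to interpolate, which costs only a $\log$ factor and is absorbed into the constants.

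The main obstacle I expect is \emph{quantitative} control of the Gaussian approximation for $\sqrt n \langle w_i, \theta\rangle$ together with the near-orthogonality perturbation: one must be sure that the error in $\PP(g_i \ge 2c_1 t) \ge c_2 e^{-C_3 t^2}$ is genuinely smaller than the main term throughout $0 \le t \le \sqrt{\log k}$, which is exactly why the length of the interval is $\sqrt{\log k}$ and not larger — beyond that the Gaussian tail becomes smaller than the approximation error. A clean way to sidestep delicate marginal estimates is to project $\theta$ onto $\mathrm{span}(w_1,\ldots,w_k) \cong \RR^k$: conditionally on the projection having the typical norm (which holds off an event of probability $\le C e^{-ck}$), the direction $(\langle w_1,\theta\rangle,\ldots,\langle w_k,\theta\rangle)/\|\cdot\|$ is \emph{exactly} uniform on $S^{k-1}$, and for a uniform point $u \in S^{k-1}$ one has the elementary bound $\PP(\sqrt k\, u_1 \ge \tau) \ge c e^{-C\tau^2}$ for $0 \le \tau \le c\sqrt{k}$, with an absolutely explicit constant; then the count $\#\{i : \sqrt k\, u_i \ge \tau\}$ is handled by concentration on $S^{k-1}$ exactly as above, and the passage back from $u_i$ to $g_i$ only involves the benign factors $a_{ii}/\|\mathrm{proj}\| $ and the $(1\pm O(1/k))$ almost-orthogonality slack. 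This reduces the whole lemma to a one-parameter fact about uniform measure on a Euclidean sphere plus Lévy concentration, with the perturbation estimates being routine.
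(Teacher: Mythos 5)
Your proposal reaches the same conclusion by a genuinely different concentration mechanism. The paper lifts the uniform direction to a standard Gaussian vector $\Gamma$ and normalizes only at the end: the coordinates $\Gamma_1,\ldots,\Gamma_k$ are then genuinely independent, so the count $\#\{i:\Gamma_i\ge t\}$ concentrates by Chernoff with failure probability $C\exp(-c e^{-t^2}k)$, and the almost-orthogonality is absorbed through the high-probability bounds $\sum_{j\le k}|\Gamma_j|\le 2k$ and $|\Gamma|\le 2\sqrt n$. You instead stay on the sphere, lower-bound each marginal tail exactly (a single coordinate of a uniform point on $S^{n-1}$; as you observe in your last paragraph, no Gaussian approximation error is actually needed), and replace Chernoff by L\'evy concentration applied to a Lipschitz mollification $\Phi=\sum_i\varphi(g_i)$ of the count. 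Both routes work, and both hinge on the same numerology: the exponent constant $C_3$ must be small enough (equivalently, the threshold constant $c_1$ small enough) that the failure probability $C\exp(-c e^{-2C_3t^2}k)\le C\exp(-c k^{1-2C_3})$ still beats $\exp(-c\sqrt k)$ at the endpoint $t=\sqrt{\log k}$, which you correctly identify. Three slips to repair, all minor: (i) $\mathrm{Lip}(g_i)=\sqrt n$, not $O(1)$, so $\Phi$ is $O(\sqrt{nk})$-Lipschitz rather than $O(\sqrt k)$-Lipschitz; your stated deviation bound $C\exp(-cs^2/k)$ is nevertheless exactly what the correct constant gives once the factor $n$ in the exponent of the spherical concentration inequality is taken into account. (ii) The almost-orthogonality error is additive, of size $a_{ii}k^{-2}\sum_{j<i}|\langle w_j,\theta\rangle|\lesssim a_{ii}/(k\sqrt n)$, not a multiplicative $(1\pm O(1/k))$ factor, and it is dominated by the main term $a_{ii}t/\sqrt n$ only for $t$ bounded below (say $t\ge 4/k$, which is the condition the paper imposes). (iii) A bump with $1_{[2c_1t,\infty)}\ge\varphi\ge 1_{[3c_1t,\infty)}$ cannot be $1$-Lipschitz when $c_1t<1$. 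All three are handled by treating small $t$ via monotonicity of the count from $t=O(1)$, which the dyadic discretization in $t$ already forces you to do; with those repairs the argument is sound.
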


\begin{proof} Let $w_1,\ldots, w_k$ and $(a_{ij})$ be as in  (\ref{eq_1014}).
By applying an orthogonal transformation in $\RR^n$, we may assume that $w_i = e_i$, the standard $i^{th}$ unit vector.
Let $\Gamma = (\Gamma_1,\ldots,\Gamma_n) \in \RR^n$ be a standard Gaussian random vector in $\RR^n$.
For  $i=1,\ldots,n$ and $t > 0$, it is well-known that
$$ \PP(\Gamma_i \geq t) = \frac{1}{\sqrt{2 \pi}} \int_t^{\infty} e^{-s^2/2} ds \in [c  e^{-t^2}, C e^{-t^2/2} ].
$$
Therefore, by the Chernoff large deviations bound (e.g., \cite[Chapter 2]{BMS}), for any $t > 0$,
\begin{equation} \PP \left( \# \left \{ 1 \leq i \leq k \, ; \, \Gamma_i \geq t \right \} \geq
  \frac{c}{2} \cdot e^{-t^2} \cdot k \right) \geq 1 - \tilde{C} \exp \left(- \tilde{c} e^{-t^2} k  \right). \label{eq-1053} \end{equation}
From the Bernstein large deviation inequality (e.g., \cite[Chapter 2]{BMS}),
\begin{equation} \PP \left( |\Gamma| \leq 2 \sqrt{n} \right) \geq 1 - C e^{-cn},
\qquad \PP \left( \sum_{i=1}^k |\Gamma_i| \leq 2 k \right) \geq 1 - \hat{C} e^{-\hat{c} k}.
\label{eq_1055} \end{equation}
Note that when $\sum_{i=1}^k |\Gamma_i| \leq 2k$, for any $i=1,\ldots,k$,
\begin{equation}  \langle \Gamma, v_i \rangle = a_{ii} \cdot \left \langle \Gamma, e_i + \sum_{j=2}^i \frac{a_{ij}}{a_{ii}} e_j \right \rangle
\geq  a_{ii} \left( \Gamma_i - \frac{\sum_{j=1}^k |\Gamma_j|}{k^2} \right) \geq a_{ii} \left( \Gamma_i - \frac{2}{k} \right).
\label{eq_1053} \end{equation}
Moreover, $a_{ii} = |v_i - \sum_{j \leq 2} a_{ij} e_j| \geq |v_i| - a_{ii} / k$ for all $i=1,\ldots,k$. Therefore $a_{ii} \geq |v_i| / 2$ for all $i$.
It thus follows from (\ref{eq_1053}) that when $\sum_{i=1}^k |\Gamma_i| \leq 2k$, for any $i$,
$$ \Gamma_i \geq t \qquad \Longrightarrow \quad \langle \Gamma, v_i \rangle \geq a_{ii} \Gamma_i / 2 \geq |v_i| t / 4
\qquad \qquad \qquad \text{for all} \ t \geq 4 / k. $$
Hence we deduce from (\ref{eq-1053}) and (\ref{eq_1055}) that for all $t \geq 4 / k$,
\begin{equation} \PP \left( \# \left \{ i  \, ; \, \langle \Gamma, v_i \rangle \geq \frac{ t |v_i|}{4} \right \} \geq  \frac{c}{2} \cdot e^{-t^2} \cdot k \right) \geq 1 - \tilde{C} \exp \left(- \tilde{c} e^{-t^2} k  \right). \label{eq_1100} \end{equation}
Write $I = \{ \ell \in \ZZ \, ; \, \ell \geq 2, \, 2^{\ell} \leq \sqrt{\log k} / 5 \}$. By
substituting $t = 2^{\ell}$ into (\ref{eq_1100}) we see that
$$ \PP \left( \forall \ell \in I, \# \left \{ i  \, ; \, \langle \Gamma, v_i \rangle \geq  2^{\ell-2} |v_i| \right \} \geq  \frac{c}{2} \cdot e^{-(2^{\ell})^2} \cdot k \right) \geq 1 - \tilde{C} \sum_{\ell \in I} \exp \left(- \tilde{c} e^{-(2^{\ell})^2} k  \right).
$$
The latter sum is at most $\overline{C} \exp(-\overline{c} \sqrt{k})$. Moreover, suppose that $x \in \RR^n$ is a fixed vector such that $\# \left \{ i  \, ; \, \langle x, v_i \rangle \geq t |v_i|/4 \right \} \geq (c/2) e^{-t^2} k$ for all $1 \leq t \leq \sqrt{\log k} / 5$
of the form $t = 2^{\ell}$ for an integer $\ell \geq 2$. By adjusting the constants, we see that for any real number $t$ with $0 \leq t \leq \sqrt{\log k}$,
$$ \# \left \{ i  \, ; \, \langle x, v_i \rangle \geq c_1 t |v_i| \right \} \geq \tilde{c} e^{-\tilde{C} t^2} k. $$
Consequently,
$$ \PP \left( \forall t \in [0, \sqrt{\log k}], \ \# \left \{ i  \, ; \, \langle \Gamma, v_i \rangle \geq  c_1 t |v_i| \right \} \geq  \tilde{c} e^{-\tilde{C} t^2} \cdot k \right) \geq 1 - \overline{C} e^{-\overline{c} \sqrt{k}}.
$$
Recall that $|\Gamma| \leq 2 \sqrt{n}$ with a probability of at least $1 - C e^{-c n}$. Therefore, as $k \leq n$,
\begin{equation} \PP \left( \forall t \in [0, \sqrt{\log k}], \ \# \left \{ i  \, ; \, \left \langle \frac{\Gamma}{|\Gamma|}, v_i \right \rangle \geq  c_1 \frac{t |v_i|}{2 \sqrt{n}} \right \} \geq  \tilde{c} e^{-\tilde{C} t^2} \cdot k \right) \geq 1 - \hat{C} e^{-\hat{c} \sqrt{k}}.
\label{eq_1127} \end{equation}
Since  $\Gamma / |\Gamma|$ is distributed uniformly on $S^{n-1}$, the lemma follows from (\ref{eq_1127}).
\end{proof}

Let $E \subseteq \RR^n$ be an arbitrary subspace. It follows from (\ref{eq_1051}) that
\begin{equation}
 \EE \left| \proj_{E} \frac{X}{|X|} \right|^2 = \EE \sum_{i=1}^{\dim(E)} \left \langle \frac{X}{|X|}, u_i \right \rangle^2
  \leq 5 \frac{\dim(E)}{n}, \label{eq_1117} \end{equation}
 where $u_1,\ldots,u_m$ is an orthonormal basis
 of the subspace $E$ for $m = \dim(E)$.

\begin{lemma} Set $\ell = \lfloor n^{1/8} \rfloor$ and let $\theta_1,\theta_2 \in S^{n-1}$ be any fixed vectors.
Let $X_1,\ldots,X_\ell$ be independent copies of the random vector $X$. Then with a probability of at least $1 - C / \ell$
of selecting $X_1,\ldots,X_{\ell}$, there exists a subset $I \subseteq \{ 1,\ldots,\ell \}$ with the following three properties:
\begin{enumerate}
\item[(i)] $\displaystyle k : = \#(I) \geq \ell / 10$.
\item[(ii)] We may write $I = \{ i_1,\ldots,i_{k} \}$ such that $(X_{i_1},\ldots,X_{i_{k}}) \in \cO_k$.
\item[(iii)] For  $j=1,\ldots,k$, $$ |X_{i_j}| \geq M, \quad |\langle X_{i_j}, \theta_1 \rangle| \leq 10 |X_{i_j}| /\sqrt{n} \quad \text{and}
\quad |\langle X_{i_j}, \theta_2 \rangle| \leq 10 |X_{i_j}|/\sqrt{n}. $$
\end{enumerate}
Here, $C > 0$ is a universal constant.
\label{lem_1009}
\end{lemma}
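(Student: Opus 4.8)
The plan is to build $I$ by a greedy almost-orthogonalization of those copies $X_i$ that already meet the requirements in (iii), and then to control how many indices the procedure is forced to discard. First I would record the relevant one-vector event: let $A_i$ be the event that $|X_i| \geq M$, $|\langle X_i, \theta_1 \rangle| \leq 10 |X_i| / \sqrt{n}$ and $|\langle X_i, \theta_2 \rangle| \leq 10 |X_i| / \sqrt{n}$. By Lemma~\ref{lem_519} we have $\PP(A_i) > 1/5$, and $A_i$ depends on $X_i$ alone. Set $J = \{ 1 \leq i \leq \ell \,;\, A_i \text{ holds} \}$. Since $\#(J)$ is a sum of $\ell$ independent Bernoulli variables each of mean exceeding $1/5$, a Chernoff (or even a Chebyshev) bound gives $\PP(\#(J) \geq \ell/6) \geq 1 - C/\ell$. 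Every index of $J$ already satisfies property (iii), so it remains to pass to an almost-orthogonal sub-tuple of $\{X_i\}_{i \in J}$ that retains at least $\ell/10$ indices.

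Next I would run the following selection, processing $i = 1, 2, \ldots, \ell$ in increasing order: maintain a set $S$ of chosen indices together with the subspace $E = \mathrm{span}\{ X_j \,;\, j \in S\}$, and when examining $i$, add it to $S$ precisely if $i \in J$ and $|\proj_E X_i| < |X_i| / \ell^2$. Let $I$ be the final set $S$ and $k = \#(I) \leq \ell$. Writing $I = \{ i_1 < \cdots < i_k \}$, at the step where $i_j$ entered $S$ the current subspace was exactly $E = \mathrm{span}\{X_{i_1},\ldots,X_{i_{j-1}}\}$, and we had $|\proj_E X_{i_j}| < |X_{i_j}|/\ell^2 \leq |X_{i_j}|/k^2$; by the Gram--Schmidt criterion~(\ref{eq_1634}) this shows $(X_{i_1},\ldots,X_{i_k}) \in \cO_k$, which is (ii), while (iii) holds because $I \subseteq J$.

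It remains to verify (i), that is, to bound $D := \#(J) - k$, the number of discarded indices. Let $\cG_{i-1} = \sigma(X_1,\ldots,X_{i-1})$. At step $i$ the subspace $E_{i-1}$ is $\cG_{i-1}$-measurable with $\dim E_{i-1} \leq \ell$, whereas $X_i$ is independent of $\cG_{i-1}$; and an index $i \in J$ is discarded only when $|\proj_{E_{i-1}}(X_i/|X_i|)| \geq 1/\ell^2$. Hence, by Markov's inequality and the second-moment bound~(\ref{eq_1117}),
\[ \PP\bigl( i \text{ discarded} \mid \cG_{i-1} \bigr) \;\leq\; \ell^4 \, \EE\bigl[ |\proj_{E_{i-1}}(X_i/|X_i|)|^2 \mid \cG_{i-1} \bigr] \;\leq\; \ell^4 \cdot \frac{5 \dim E_{i-1}}{n} \;\leq\; \frac{5 \ell^5}{n} . \]
Taking expectations and summing over $i \leq \ell$ gives $\EE[D] \leq 5\ell^6/n \leq 5 n^{-1/4}$, using $\ell \leq n^{1/8}$, so Markov's inequality again gives $\PP(D \geq \ell/100) \leq 500 n^{-1/4}/\ell \leq C/\ell$. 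On the intersection of $\{\#(J) \geq \ell/6\}$ and $\{D < \ell/100\}$, which has probability at least $1 - C/\ell$ by the union bound, we obtain $k = \#(J) - D \geq \ell/6 - \ell/100 \geq \ell/10$, proving (i).

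The delicate point is exactly this last estimate: the subspace onto which $X_i$ is projected is itself random, and for~(\ref{eq_1117}) to be applicable it must be independent of $X_i$. Processing the indices in a fixed order and conditioning on the past filtration $\cG_{i-1}$ is what makes the bound legitimate; everything else is a routine combination of Lemma~\ref{lem_519}, the trivial dimension bound $\dim E_{i-1} \leq \ell$, and two applications of Markov's inequality. (Throughout one assumes $n$ is large enough that $\ell \geq 2$; for small $n$ the conclusion $1 - C/\ell$ is vacuous.)
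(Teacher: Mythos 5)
Your proof is correct and follows essentially the same route as the paper: Lemma~\ref{lem_519} plus Chernoff for the size of the set of indices satisfying (iii), and the bound (\ref{eq_1117}) combined with Markov's inequality and the independence of $X_i$ from the earlier vectors for the almost-orthogonality. The only difference is bookkeeping: the paper takes $I$ to be all of your $J$ and shows that with probability at least $1-1/\ell$ \emph{every} $i$ satisfies $|\proj_{F_{i-1}}(X_i/|X_i|)|<1/\ell^2$ with $F_{i-1}=\mathrm{span}(X_1,\ldots,X_{i-1})$, so that the containment $E_{j-1}\subseteq F_{i_j-1}$ makes any greedy discarding unnecessary, whereas you run a selection pass and bound the expected number of discarded indices --- both arguments handle the randomness of the subspace by conditioning on the preceding vectors.
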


\begin{proof} We may assume that $\ell \geq 10$, as otherwise the lemma trivially holds with any $C \geq 10$.
Define
$$  I = \left \{ 1 \leq i \leq \ell \, ; \, |X_i| \geq M, \, |\langle X_{i}, \theta_1 \rangle| \leq 10 |X_i| /\sqrt{n},
\, |\langle X_{i}, \theta_2 \rangle| \leq 10 |X_i| /\sqrt{n} \right \}. $$
Denote $k = \#(I)$ and let $i_1 < i_2 < \ldots < i_{k}$ be the elements of $I$.
We conclude from Lemma \ref{lem_519} and the Chernoff large deviation bound that
\begin{equation}  \PP ( \#(I) \geq \ell/10 ) \geq 1 - C \exp(-c \ell). \label{eq_1617} \end{equation}
Thus (i) holds with a probability of at least $1 - C \exp(-c \ell)$. Clearly (iii) holds true with probability one, by the definition of $I$.
All that remains is to show that (ii) holds true with a probability of at least $1 - 1 / \ell$.
Write $F_i$ for the subspace
spanned by $X_1,\ldots,X_i$, with $F_0 = \{ 0 \}$. It follows from (\ref{eq_1117}) that
for $i=1,\ldots,\ell$,
$$ \EE \left| Proj_{F_{i-1}} \frac{X_i}{|X_i|} \right|^2 \leq \frac{5 \cdot \dim(F_{i-1})}{n} \leq \frac{5(i-1)}{n} \leq \frac{5\ell}{n} < \frac{1}{\ell^6},  $$
as $10 \leq \ell \leq n^{1/8}$. It follows from the Markov-Chebyshev inequality that with a probability of at least $1 - 1/\ell$,
$$ \left| Proj_{F_{i-1}} \frac{X_i}{|X_i|} \right| < \frac{1}{\ell^2} \qquad \qquad \text{for all} \ i=1,\ldots,\ell. $$
Write $E_j$ for the subspace spanned by $X_{i_1},\ldots,X_{i_j}$. Then $E_{j-1} \subseteq F_{i_j-1}$. Therefore,
with a probability of at least $1 - 1/\ell$,
$$ \left| Proj_{E_{j-1}} \frac{X_{i_j}}{|X_{i_j}|} \right| \leq \left| \proj_{F_{i_j-1}}  \frac{X_{i_j}}{|X_{i_j}|}  \right| < \frac{1}{\ell^2} \leq \frac{1}{k^2} \qquad \qquad \text{for all} \ j=1,\ldots,k. $$
In view of (\ref{eq_1634}), we see that (ii) holds true with a probability of at least $1 - 1/\ell$, thus completing the proof of the lemma.
\end{proof}

By combining Lemma \ref{lem_1013} and Lemma \ref{lem_1009} we arrive at the following:

\begin{lemma} Let $\ell, \theta_1, \theta_2$ be as in Lemma \ref{lem_1009}.
Then there exists a fixed subset $\cF \subseteq S^{n-1}$ with $\sigma_{n-1}(\cF) \geq 1 - C / \sqrt{\ell}$
such that for any $\theta_3 \in \cF$ the following holds: Define $\theta$ via (\ref{eq_741}).
Let $X_1,\ldots,X_\ell$ be independent copies of the random vector $X$. Then with a probability of at least $1 - C / \sqrt{\ell}$
of selecting $X_1,\ldots,X_{\ell}$,
\begin{equation}  \# \left \{ 1 \leq i \leq \ell \, ; \, \langle X_i, \theta \rangle \geq c_1 \frac{ M}{\sqrt{n}} \cdot t \right \} \geq c_2 e^{-C_3 t^2} \cdot \ell,
\qquad \qquad \text{for all} \ 0 \leq t \leq \sqrt{\log \ell}, \label{eq_1150} \end{equation}
and
\begin{equation} \# \left \{ 1 \leq i \leq \ell \, ; \, \langle X_i, \theta \rangle \leq -c_1 \frac{ M}{\sqrt{n}} \cdot t \right \} \geq c_2 e^{-C_3 t^2} \cdot \ell,
\qquad \qquad \text{for all} \ 0 \leq t \leq \sqrt{\log \ell}. \label{eq_1151} \end{equation}
Here, $c_1, c_2, C_3, c, C > 0$ are universal constants.
\label{lem_1114}
\end{lemma}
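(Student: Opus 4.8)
The plan is to deduce the lemma by combining Lemma~\ref{lem_1009} with Lemma~\ref{lem_1013} and then decoupling the auxiliary direction from the sample. We may assume $\ell$ exceeds a suitable universal constant, since otherwise $C/\sqrt{\ell} \ge 1$ and there is nothing to prove. Introduce a random vector $\theta_3$, distributed uniformly on $S^{n-1}$ and independent of $X_1,\ldots,X_\ell$, and define $\theta$ by (\ref{eq_741}). By the triangle inequality $|\theta_1-\theta_2+\theta_3| \le 3$ always, and since $\langle\theta_1-\theta_2,\theta_3\rangle$ concentrates near zero, with probability at least $1-Ce^{-cn}$ over $\theta_3$ we also have $|\theta_1-\theta_2+\theta_3| \ge 1/2$. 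On the event furnished by Lemma~\ref{lem_1009}, whose probability over $X_1,\ldots,X_\ell$ is at least $1-C/\ell$, we are given $I=\{i_1,\ldots,i_k\}$ with $k \ge \ell/10$, with $(X_{i_1},\ldots,X_{i_k}) \in \cO_k$, and with $|X_{i_j}|\ge M$ and $|\langle X_{i_j},\theta_1-\theta_2\rangle| \le |\langle X_{i_j},\theta_1\rangle|+|\langle X_{i_j},\theta_2\rangle| \le 20|X_{i_j}|/\sqrt{n}$ for every $j$.

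Next I apply Lemma~\ref{lem_1013} to the fixed tuple $(X_{i_1},\ldots,X_{i_k})$, once with the uniform direction $\theta_3$ and once with $-\theta_3$ (also uniform). Intersecting the two exceptional sets, for all $\theta_3$ outside a set of $\sigma_{n-1}$-measure at most $C\exp(-c\sqrt{k}) \le C\exp(-c\sqrt{\ell})$ we get, for all $0 \le t \le \sqrt{\log k}$,
$$ \# \left\{ 1\le j\le k \,:\, \langle X_{i_j},\theta_3\rangle \ge c_1 |X_{i_j}|\, t/\sqrt{n} \right\} \ge c_2 e^{-C_3 t^2} k , $$
and likewise with $\theta_3$ replaced by $-\theta_3$. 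Now set $t_* := 40/c_1$, a universal constant. If $t_* \le t \le \sqrt{\log k}$ and $j$ belongs to the above set, then $\langle X_{i_j},\theta_1-\theta_2+\theta_3\rangle \ge c_1|X_{i_j}|t/\sqrt{n} - 20|X_{i_j}|/\sqrt{n} \ge \tfrac12 c_1|X_{i_j}|t/\sqrt{n} > 0$, so dividing by $|\theta_1-\theta_2+\theta_3|\le 3$ and using $|X_{i_j}|\ge M$ gives $\langle X_{i_j},\theta\rangle \ge c_4 M t/\sqrt{n}$ with $c_4 := c_1/6$; the version with $-\theta_3$ gives symmetrically $\langle X_{i_j},\theta\rangle \le -c_4 M t/\sqrt{n}$. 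Since the $i_j$ are distinct and $k \ge \ell/10$, we obtain, on the intersection of these events and for all $t_* \le t \le \sqrt{\log k}$,
$$ \# \left\{ 1\le i\le \ell \,:\, \langle X_i,\theta\rangle \ge c_4 M t/\sqrt{n} \right\} \ge \tfrac{c_2}{10}\, e^{-C_3 t^2}\, \ell , $$
and likewise with the inequality signs reversed and $c_4$ replaced by $-c_4$ on the right. The range $0 \le t \le t_*$ follows from the $t=t_*$ case by monotonicity of $t \mapsto \#\{\langle X_i,\theta\rangle \ge c_4 M t/\sqrt n\}$, at the cost of replacing $c_2/10$ by $(c_2/10)e^{-C_3 t_*^2}$; and since $k \ge \ell/10$ gives $\sqrt{\log k} \ge \tfrac12\sqrt{\log\ell}$ for $\ell$ large, substituting $t/2$ for $t$ extends the range to all $0 \le t \le \sqrt{\log\ell}$ at the cost of halving $c_4$ and replacing $C_3$ by $C_3/4$. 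Thus, with suitable universal constants in place of $c_1,c_2,C_3$, the estimates (\ref{eq_1150}) and (\ref{eq_1151}) hold whenever the Lemma~\ref{lem_1009} event holds, $|\theta_1-\theta_2+\theta_3|\ge 1/2$, and $\theta_3$ avoids the Lemma~\ref{lem_1013} exceptional set.

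To produce a \emph{fixed} set $\cF$, let $\mathcal{E}=\mathcal{E}(\theta_3,X_1,\ldots,X_\ell)$ be the event that (\ref{eq_1150}) and (\ref{eq_1151}) both hold. By the above, $\mathcal{E}^c$ is contained in the union of the complement of the Lemma~\ref{lem_1009} event, the event $\{|\theta_1-\theta_2+\theta_3|<1/2\}$, and the event that $\theta_3$ lies in the Lemma~\ref{lem_1013} exceptional set, which, conditionally on $X_1,\ldots,X_\ell$ lying in the Lemma~\ref{lem_1009} event, has $\theta_3$-probability at most $C\exp(-c\sqrt{\ell})$. Hence, with $\theta_3$ independent of the $X_i$, Fubini gives
$$ \PP_{\theta_3,X}(\mathcal{E}^c) \le \frac{C}{\ell} + C e^{-cn} + C e^{-c\sqrt{\ell}} \le \frac{C'}{\ell} . $$
Writing $g(\theta_3) = \PP_X(\mathcal{E}^c \mid \theta_3)$, we have $\EE_{\theta_3} g(\theta_3) \le C'/\ell$, so by Markov's inequality the set $\cF := \{\theta_3 \in S^{n-1} : g(\theta_3) \le \sqrt{C'}/\sqrt{\ell}\}$ satisfies $\sigma_{n-1}(\cF) \ge 1-\sqrt{C'}/\sqrt{\ell}$; and for every $\theta_3 \in \cF$ the event $\mathcal{E}$ — that is, (\ref{eq_1150}) and (\ref{eq_1151}) — holds with probability at least $1-\sqrt{C'}/\sqrt{\ell}$ over the choice of $X_1,\ldots,X_\ell$. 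Since $\sqrt{C'}/\sqrt{\ell}$ is of the form $C''/\sqrt{\ell}$, this is the assertion of the lemma.

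The step I expect to demand the most care is the absorption of the shift $\langle X_{i_j},\theta_1-\theta_2\rangle$: it forces one to establish the count estimate only in the range $t\ge t_*$ and then to recover small $t$ by monotonicity and the full range up to $\sqrt{\log\ell}$ by rescaling, all while simultaneously tracking the two directions $\theta_3$ and $-\theta_3$ and keeping the two Lemma~\ref{lem_1013} measure bounds separate. The concluding Fubini–Markov decoupling is routine.
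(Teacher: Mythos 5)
Your proposal is correct and follows essentially the same route as the paper: combine Lemma \ref{lem_1009} with Lemma \ref{lem_1013}, absorb the shift $\langle X_{i_j},\theta_1-\theta_2\rangle$ using property (iii), extend the $t$-range by adjusting constants, and decouple $\theta_3$ from the sample by a Fubini--Markov argument to extract a fixed set $\cF$. The only differences are cosmetic: you obtain the negative tail by running Lemma \ref{lem_1013} for $-\theta_3$ as well (the ``trivial modification'' the paper leaves to the reader), and the auxiliary event $|\theta_1-\theta_2+\theta_3|\geq 1/2$ is not actually needed since the upper bound $|\theta_1-\theta_2+\theta_3|\leq 3$ suffices.
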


\begin{proof} Let $\Theta$ be a random vector, distributed uniformly on $S^{n-1}$.
According to Lemma \ref{lem_1009}, with a probability of at least $1 - C / \ell$ of selecting $X_1,\ldots,X_\ell$,
there exists a subset $$ I = \{ i_1,\ldots,i_k \} \subseteq \{ 1,\ldots,\ell \} $$
such that properties (i), (ii) and (iii)
of Lemma \ref{lem_1009} hold true. Let us apply Lemma \ref{lem_1013}.
Then under the event where
properties (i), (ii) and (iii) hold true,
with a probability of at least $1 - \tilde{C} \exp(-\tilde{c} \sqrt{\ell})$ of selecting $\Theta \in S^{n-1}$,
$$ \# \left \{ 1 \leq j \leq k \, ; \, \langle X_{i_j}, \Theta \rangle \geq c_1 \frac{ |X_{i_j}|}{\sqrt{n}} \cdot t \right \} \geq c_2 e^{-C_3 t^2} \cdot k
\qquad \qquad \text{for all} \ 0 \leq t \leq \sqrt{\log k}, $$
and moreover $k \geq \ell / 10$ with
$$  \max \left \{ \left|\left \langle \frac{X_{i_j}}{|X_{i_j}|}, \theta_1 \right \rangle \right|, \left|\left \langle \frac{X_{i_j}}{|X_{i_j}|}, \theta_2 \right \rangle \right| \right \}
\leq \frac{10}{\sqrt{n}} \qquad \qquad \text{for} \ j=1,\ldots,k. $$
Consequently, under the event where
properties (i), (ii) and (iii) hold true, with a probability of at least $1 - \tilde{C} \exp(-\tilde{c} \sqrt{\ell})$ of selecting $\Theta \in S^{n-1}$,
$$ \# \left \{ 1 \leq j \leq k \, ; \, \left \langle \frac{ X_{i_j} }{|X_{i_j}|}, \theta_1 - \theta_2 + \Theta
\right \rangle \geq \frac{c_1}{2} \frac{ t}{\sqrt{n}} \right \} \geq c_2 e^{-C_3 t^2} \cdot k
 \qquad \text{for} \ t \in [80/c_1,\sqrt{\log k}]. $$
Since $k \geq \ell/10$, the condition $t \in [80/c_1,\sqrt{\log k}]$ can be upgraded to $t \in [0, \sqrt{\log \ell}]$ at the cost of modifying the universal constants.
 Recall that by Lemma \ref{lem_1013}(iii), we have that $|X_{i_j}| \geq M$ for all $j$.
 By the triangle inequality, with probability one, $0 < |\theta_1 - \theta_2 + \Theta| \leq 3$. Hence,
 $$ |X_{i_j}| / |\theta_1 - \theta_2 + \Theta|  \geq M / 3. $$
Therefore, under the event where
properties (i), (ii) and (iii) hold true, with a probability of at least $1 - \tilde{C} \exp(-\tilde{c} \sqrt{\ell})$ of selecting $\Theta \in S^{n-1}$,
\begin{equation} \forall t \in [0, \sqrt{\log \ell}], \quad
\# \left \{ 1 \leq i \leq \ell \, ; \, \left \langle X_{i}, \frac{\theta_1 - \theta_2 + \Theta}{|\theta_1 - \theta_2 + \Theta|} \right \rangle \geq \bar{c}_1 \frac{ M}{\sqrt{n}} \cdot t \right \} \geq \bar{c}_2 e^{-\bar{C}_3 t^2} \cdot \ell. \label{eq_1044} \end{equation}
Write $\cA$ for the event that the statement in (\ref{eq_1044}) holds true. Denoting $\vec{X} = (X_1,\ldots,X_{\ell})$, we have shown that
$$ \PP( (\Theta, \vec{X}) \in \cA ) \geq 1 - \tilde{C} \exp(-\tilde{c} \sqrt{\ell}) - C / \ell \geq 1 - \bar{C} / \ell. $$ Denote
$$ \cF = \left \{ \theta \in S^{n-1} \, ; \, \PP_{\vec{X}}( (\theta, \vec{X}) \in \cA ) \geq 1 - \bar{C}/\sqrt{\ell} \right \}. $$
Then,
\begin{equation} \label{eq_1101} 1 - \frac{\bar{C}}{\ell} \leq \PP( (\Theta, \vec{X}) \in \cA ) \leq \PP( \Theta \in \cF) + \left( 1 - \frac{\bar{C}}{\sqrt{\ell}} \right) \PP( \Theta \not \in \cF).  \end{equation}
It follows from (\ref{eq_1101}) that $\sigma_{n-1}(\cF) = \PP( \Theta \in \cF) \geq 1 - 1 / \sqrt{\ell}$.
By the definition of $\cF \subseteq S^{n-1}$, for any $\theta_3 \in \cF$,
with a probability of at least $1 - \bar{C}{\sqrt{\ell}}$ of selecting $X_1,\ldots,X_{\ell}$,
$$ \forall t \in [0, \sqrt{\log \ell}], \quad
\# \left \{ 1 \leq i \leq \ell \, ; \, \left \langle X_{i}, \frac{\theta_1 - \theta_2 + \theta_3}{|\theta_1 - \theta_2 + \theta_3|} \right \rangle \geq \bar{c}_1 \frac{ M}{\sqrt{n}} \cdot t \right \} \geq \bar{c}_2 e^{-\bar{C}_3 t^2} \cdot \ell. $$
This completes the proof of (\ref{eq_1150}). The argument for (\ref{eq_1151}) requires only the most trivial modifications, and we leave it for the reader to complete.
\end{proof}

We will use the well-known fact that for any random variable $Y$ and measurable sets $A_1,\ldots,A_\ell$, by the Markov-Chebyshev inequality,
$$ \frac{1}{s} \cdot \sum_{i=1}^\ell \PP(Y \in A_i) = \frac{1}{s} \cdot \EE \sum_{i=1}^\ell 1_{ \{ Y \in A_i \} }
 \geq \PP \big( \# \left \{ i \, ; \, Y \in A_i \right \}  \geq s \big)
\qquad \qquad
\qquad (s > 0). $$

\begin{corollary}
Let $\theta_1,\theta_2 \in S^{n-1}$ be any fixed vectors.
Then there exists a fixed subset $\cF \subseteq S^{n-1}$ with $\sigma_{n-1}(\cF) \geq 1 - C / n^{c}$
such that for any $\theta_3 \in \cF$, defining $\theta$ via (\ref{eq_741}),
$$ \forall t \in [0, 5 \sqrt{\log n}], \quad
\min \left \{ \PP \left( \langle X, \theta \rangle \geq c_1 \frac{M}{\sqrt{n}} \cdot t \right),
\PP \left( \langle X, \theta \rangle \leq -c_1 \frac{M}{\sqrt{n}} \cdot t \right) \right \}  \geq c_2 e^{-C_3 t^2}, $$
where $c, C, c_1, c_2, C_3 > 0$ are universal constants.
\label{cor_103}
\end{corollary}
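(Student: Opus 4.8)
The plan is to feed Lemma~\ref{lem_1114} into the Markov--Chebyshev counting bound recalled just above, and then to rescale the parameter $t$ in order to enlarge the admissible range from $[0,\sqrt{\log \ell}]$ to $[0,5\sqrt{\log n}]$. Throughout set $\ell=\lfloor n^{1/8}\rfloor$. One may assume that $n$ is larger than a suitable universal constant $n_0$: for $n\le n_0$ the quantity $C/n^{c}$ exceeds $1$ once $C$ is large enough, so the requirement $\sigma_{n-1}(\cF)\ge 1-C/n^{c}$ is satisfied by the empty set and the remaining assertions are then vacuous. For $n\ge n_0$ we have $\ell\ge 10$ and $\sqrt{\ell}\ge c\,n^{1/16}$, so any bound of the form $1-C/\sqrt{\ell}$ may be rewritten as $1-C/n^{c}$ after adjusting the universal constants.

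First I would apply Lemma~\ref{lem_1114} to the given $\theta_1,\theta_2$. This produces a fixed set $\cF\subseteq S^{n-1}$ with $\sigma_{n-1}(\cF)\ge 1-C/\sqrt{\ell}\ge 1-C/n^{c}$ such that, for every $\theta_3\in\cF$, with $\theta$ defined via (\ref{eq_741}) and $X_1,\ldots,X_\ell$ independent copies of $X$, the simultaneous-in-$t$ estimates (\ref{eq_1150}) and (\ref{eq_1151}) hold with probability at least $1-C/\sqrt{\ell}$ over the choice of $(X_1,\ldots,X_\ell)$. Fix such a $\theta_3$ and the corresponding $\theta$, and fix a real number $t\in[0,\sqrt{\log\ell}]$. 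Apply the counting inequality with the random object $Y=(X_1,\ldots,X_\ell)$, with $A_i=\{(x_1,\ldots,x_\ell)\,;\,\langle x_i,\theta\rangle\ge c_1 M t/\sqrt{n}\}$, and with $s=c_2 e^{-C_3 t^2}\,\ell$. Since $X_i$ has the same law as $X$, we have $\PP(Y\in A_i)=\PP(\langle X,\theta\rangle\ge c_1 M t/\sqrt{n})$ for each $i$, while $\#\{i\,;\,Y\in A_i\}$ is precisely the cardinality in (\ref{eq_1150}); hence (\ref{eq_1150}) yields $\PP(\#\{i\,;\,Y\in A_i\}\ge s)\ge 1-C/\sqrt{\ell}$. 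The counting inequality then gives
$$ \ell\cdot\PP\!\left(\langle X,\theta\rangle\ge \frac{c_1 M t}{\sqrt{n}}\right)=\sum_{i=1}^{\ell}\PP(Y\in A_i)\ \ge\ s\left(1-\frac{C}{\sqrt{\ell}}\right)=c_2 e^{-C_3 t^2}\,\ell\left(1-\frac{C}{\sqrt{\ell}}\right), $$
so that $\PP(\langle X,\theta\rangle\ge c_1 M t/\sqrt{n})\ge (c_2/2)e^{-C_3 t^2}$ provided $n\ge n_0$. The identical argument applied to (\ref{eq_1151}) bounds the lower tail. Thus the desired two-sided estimate holds for every $t\in[0,\sqrt{\log\ell}]$, with $c_2$ replaced by $c_2/2$.

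It remains to stretch the $t$-range. Since $\ell\ge n^{1/8}/2$ for $n\ge n_0$, we have $\log\ell\ge\tfrac18\log n-\log 2\ge\tfrac1{16}\log n$, hence $20\sqrt{\log\ell}\ge 5\sqrt{\log n}$. Substituting $t\mapsto t/20$ into the bound just obtained, we conclude that for all $t\in[0,5\sqrt{\log n}]$,
$$ \min\left\{\PP\!\left(\langle X,\theta\rangle\ge \frac{c_1}{20}\cdot\frac{M t}{\sqrt{n}}\right),\ \PP\!\left(\langle X,\theta\rangle\le -\frac{c_1}{20}\cdot\frac{M t}{\sqrt{n}}\right)\right\}\ \ge\ \frac{c_2}{2}\,e^{-(C_3/400) t^2}, $$
and renaming $c_1/20$, $c_2/2$ and $C_3/400$ as fresh universal constants completes the proof. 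The only genuinely delicate point is this last rescaling: Lemma~\ref{lem_1114} controls the count only up to $t\approx\sqrt{\log\ell}\approx\sqrt{(\log n)/8}$, so one must absorb both the polynomial factor $\ell=n^{1/8}$ appearing in the probability estimates and the corresponding shortening of the $t$-interval into the universal constants, which is harmless precisely because the conclusion is stated with free constants $c_1,c_2,C_3$.
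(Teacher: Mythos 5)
Your proposal is correct and follows essentially the same route as the paper: apply Lemma \ref{lem_1114} with $\ell=\lfloor n^{1/8}\rfloor$, convert the high-probability counting bound into a bound on $\PP(\langle X,\theta\rangle\ge c_1Mt/\sqrt{n})$ via the Markov--Chebyshev observation, and absorb the gap between $\sqrt{\log\ell}$ and $5\sqrt{\log n}$ by rescaling $t$ and the universal constants. The paper compresses the final rescaling into ``the desired conclusion follows by adjusting the constants,'' which you have simply spelled out.
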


\begin{proof} We may assume that $n$ exceeds a certain fixed universal constant, as otherwise the conclusion of the lemma
trivially holds for $\cF = \emptyset$. Set $\ell = \lfloor n^{1/8} \rfloor$ and let $\cF$ be the set from Lemma \ref{lem_1114}.
Let $\theta_3 \in \cF$ and define $\theta$ via (\ref{eq_741}).
Suppose that $X_1,\ldots,X_\ell$ are independent copies of the random vector $X$. Then  for any $0 \leq t \leq \sqrt{\log \ell}$,
\begin{align*} \PP &\left(\langle X, \theta \rangle \geq c_1 \frac{ M}{\sqrt{n}} \cdot t \right)  = c_2 e^{-C_3 t^2} \frac{1}{c_2 e^{-C_3 t^2} \cdot \ell} \sum_{i=1}^\ell \PP\left(\langle X_i, \theta \rangle \geq c_1 \frac{ M}{\sqrt{n}} \cdot t \right) \\
& \geq c_2 e^{-C_3 t^2}  \cdot \PP \left( \# \left \{ i ; \langle X_i, \theta \rangle \geq c_1 \frac{ M}{\sqrt{n}} \cdot t  \right \} \geq c_2 e^{-C_3 t^2} \cdot \ell \right)
\geq c_2 e^{-C_3 t^2} \cdot (1 - C / \sqrt{\ell}),
\end{align*}
where the last passage is the content of Lemma \ref{lem_1114}. We may similarly obtain a corresponding
lower bound for $ \PP \left(\langle X, \theta \rangle \leq -c_1 t M / \sqrt{n} \right)$.
Since $\ell = \lfloor n^{1/8} \rfloor$,
the desired conclusion follows by adjusting the constants.
\end{proof}

\section{Geometry of the high-dimensional sphere}
\label{sec3}

This is the second section dedicated to the proof of Proposition \ref{prop_818}.
A few geometric properties of the high-dimensional sphere will be used here.
For example, the sphere $S^{n-1}$ does not contain more than $n$ mutually orthogonal vectors, yet it contains $e^{\eps n}$
mutually almost-orthogonal vectors. Moreover, for the purpose of computing the expectation of the supremum,
a family of $e^{\eps n}$ standard Gaussians which are almost-orthogonal in pairs behaves approximately like a collection of  independent Gaussians.

\medskip
While Corollary \ref{cor_103} takes care of the interval $t \in [0, 5 \sqrt{\log n}]$,
in this section we deal with the range  $t \in [5 \sqrt{\log n}, t_0]$
where $t_0$ is defined in (\ref{eq_954}). We begin with some background on Sudakov's minoration theorem and the concentration of measure inequality on the sphere.
Given a bounded, non-empty subset $S \subseteq \RR^n$, its {\it supporting functional} is defined via
$$ h_S(\theta) = \sup_{x \in S} \langle x, \theta \rangle \qquad \qquad (\theta \in \RR^n). $$
The supporting functional $h_S$ is a convex function on $\RR^n$ whose Lipschitz constant
is bounded by $R(S) = \sup_{x\in S} |x|$. The {\it mean width} of $S$ is $2 M^*(S)$ where
$$ M^*(S) = \int_{S^{n-1}} h_S(\theta) d \sigma_{n-1}(\theta). $$
The concentration inequality for Lipschitz functions on the sphere (see, e.g., \cite[Appendix V]{MS})
states that for any $r > 0$,
\begin{equation}
 \sigma_{n-1}
 \left( \left \{ v \in S^{n-1} \, ; \, |h_S(v) - M^*(S)| \geq r \cdot R(S) \right \} \right) \leq C e^{-c r^2 n}.
 \label{eq_715}
\end{equation}
A lower bound for $M^*(S)$ is provided by the following Sudakov's minoration theorem (see, e.g., \cite[Section 3.3]{LT}):

\begin{theorem}[Sudakov] Let $N \geq 1, \alpha > 0$ and let $x_1,\ldots,x_N \in \RR^n$.
Set $S = \{ x_1,\ldots, x_N \}$  and assume that $|x_i - x_j| \geq \alpha$
for any $i \neq j$. Then,
$$ M^*(S) \geq c \alpha \sqrt{ \frac{\log N}{n} }, $$
where $c > 0$ is a universal constant. \label{thm_721}
\end{theorem}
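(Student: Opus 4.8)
\emph{Proof plan.} The plan is to pass from the spherical mean width $M^*(S)$ to the \emph{Gaussian} mean width and then apply a Gaussian comparison inequality. Let $\Gamma$ be a standard Gaussian random vector in $\RR^n$, and set $w(S) = \EE \max_{1 \leq i \leq N} \langle x_i, \Gamma \rangle$. Writing $\Gamma = |\Gamma| \cdot \Theta$, where $\Theta = \Gamma / |\Gamma|$ is distributed uniformly on $S^{n-1}$ and is independent of $|\Gamma|$, and using $\max_i \langle x_i, \Gamma \rangle = |\Gamma| \cdot \max_i \langle x_i, \Theta \rangle = |\Gamma| \cdot h_S(\Theta)$, one gets
\[
w(S) = \EE |\Gamma| \cdot \EE \, h_S(\Theta) = \EE |\Gamma| \cdot M^*(S).
\]
Since $\EE |\Gamma| \leq \sqrt{\EE |\Gamma|^2} = \sqrt{n}$ and $w(S) \geq \EE \langle x_1, \Gamma \rangle = 0$, it suffices to prove the Gaussian bound $w(S) \geq c \alpha \sqrt{\log N}$; dividing by $\EE|\Gamma|$ then gives $M^*(S) \geq w(S)/\sqrt{n} \geq c \alpha \sqrt{(\log N)/n}$, as desired. (For $N = 1$ there is nothing to prove: the hypothesis is vacuous and $M^*(\{x_1\}) = \langle x_1, \int_{S^{n-1}} \theta \, d\sigma_{n-1}(\theta) \rangle = 0$.)

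For the Gaussian bound I would invoke the Sudakov--Fernique comparison inequality. The centered Gaussian family $g_i := \langle x_i, \Gamma \rangle$ has squared increments $\EE (g_i - g_j)^2 = |x_i - x_j|^2 \geq \alpha^2$ for all $i \neq j$. Let $\gamma_1, \dots, \gamma_N$ be independent standard Gaussian variables and put $h_i := (\alpha/\sqrt{2}) \, \gamma_i$, so that $\EE(h_i - h_j)^2 = \alpha^2 \leq \EE(g_i - g_j)^2$ for $i \neq j$. Sudakov--Fernique then yields
\[
w(S) = \EE \max_{1 \leq i \leq N} g_i \ \geq \ \EE \max_{1 \leq i \leq N} h_i \ = \ \frac{\alpha}{\sqrt{2}} \cdot \EE \max_{1 \leq i \leq N} \gamma_i.
\]
Finally one uses the well-known fact that $\EE \max_{i \leq N} \gamma_i \geq c \sqrt{\log N}$ for $N \geq 2$ and a universal $c > 0$; this follows, for instance, by bounding below the median of $\max_i \gamma_i$ via a Gaussian tail estimate of the form $\PP(\gamma_1 \geq s) \geq c' e^{-s^2}$ (already used in the proof of Lemma~\ref{lem_1013}) with $s \asymp \sqrt{\log N}$, together with the trivial bound $\EE (\max_i \gamma_i)_- \leq \EE(\gamma_1)_- = 1/\sqrt{2\pi}$. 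Combining the three displays proves the theorem.

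I expect the only genuinely non-elementary ingredient to be the Sudakov--Fernique inequality itself, which would be the main obstacle in a fully self-contained treatment; it is proved by the standard Gaussian interpolation argument (approximate $\max$ by a smooth function, differentiate along the path $t \mapsto \sqrt{t}\, g + \sqrt{1-t}\, h$, and integrate by parts, using that the mixed second derivatives of a smoothed maximum are nonnegative off the diagonal). An alternative that sidesteps Sudakov--Fernique is the classical direct proof of Sudakov's minoration via Gaussian concentration: the regions of $\RR^n$ on which $h_S$ is attained at a fixed $x_i$ partition $\RR^n$, and a pigeonhole argument combined with the concentration inequality (\ref{eq_715}) forces $M^*(S)$ to be of order $\alpha\sqrt{(\log N)/n}$; the subtle point there is that the Lipschitz constant $R(S)$ of $h_S$ is uncontrolled, so one first truncates $S$ to a ball of radius comparable to its Gaussian width. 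Either route delivers the claimed inequality with a universal constant.
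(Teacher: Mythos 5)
The paper does not prove this statement at all: Theorem~\ref{thm_721} is quoted as a known result with a pointer to Ledoux--Talagrand \cite{LT}, and is used as a black box in the proof of Lemma~\ref{lem_506}. So there is no in-paper argument to compare against; what you have written is a self-contained (modulo Sudakov--Fernique) proof of the cited theorem, and it is correct. The reduction $w(S) = \EE|\Gamma| \cdot M^*(S)$ via the independence of $|\Gamma|$ and $\Gamma/|\Gamma|$, together with $\EE|\Gamma| \leq \sqrt{n}$ and the (correctly verified) nonnegativity $w(S) \geq 0$, gives $M^*(S) \geq w(S)/\sqrt{n}$; the comparison with the i.i.d.\ family $h_i = (\alpha/\sqrt{2})\gamma_i$ satisfies the increment hypothesis $\EE(h_i-h_j)^2 = \alpha^2 \leq |x_i-x_j|^2 = \EE(g_i-g_j)^2$; and the lower bound $\EE\max_{i\leq N}\gamma_i \geq c\sqrt{\log N}$ is standard and your sketch of it (median lower bound from the tail estimate plus $(\max_i\gamma_i)_- \leq (\gamma_1)_-$) is sound, with the degenerate cases $N=1,2$ handled. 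This is essentially the textbook proof of Sudakov's minoration; you are right that the genuine content is hidden in the Sudakov--Fernique comparison (or, in the alternative route you mention, in Gaussian concentration plus the truncation needed to control $R(S)$), and you flag that honestly rather than pretending it is elementary.
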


We shall need the following elementary lemma:

\begin{lemma} Let $Z_1,\ldots,Z_N$ be random variables attaining values in $\{ 0,1 \}$. Let $1 \leq k \leq N, 0 \leq \eps \leq 1$,
and assume that for any $A \subseteq \{ 1, \ldots, N \}$ with $\#(A) = k$,
\begin{equation}  \PP \left( \exists i \in A, \ Z_i = 1 \right) \geq 1 - \eps. \label{eq_427} \end{equation}
Then,
\begin{equation}  \PP \left( \sum_{i=1}^N Z_i \geq \frac{N}{3k} \right) \geq 1 - 2 \eps. \label{eq_1629}
\end{equation}
\label{lem_elem}
\end{lemma}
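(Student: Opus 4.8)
The plan is to prove the contrapositive-flavored estimate by a direct expectation/counting argument. Suppose, toward a contradiction, that $\PP\left(\sum_{i=1}^N Z_i < N/(3k)\right) > 2\eps$. Intuitively, on this event the set $\{ i : Z_i = 1\}$ is small — of size less than $N/(3k)$ — so there should be ``room'' to find a block $A$ of size $k$ disjoint from it, which would violate (\ref{eq_427}). The technical point is that $A$ must be a fixed (deterministic) set, while the set where the $Z_i$ vanish is random; so one cannot simply pick $A$ depending on the outcome. The standard fix is an averaging argument: choose $A$ uniformly at random among all $k$-subsets of $\{1,\ldots,N\}$, independently of the $Z_i$, and estimate the probability that $A$ happens to avoid the (small) random support.

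Concretely, let $\mathcal A$ be a uniformly random $k$-subset of $\{1,\ldots,N\}$, independent of $(Z_1,\ldots,Z_N)$. Condition on a realization of the $Z_i$ with $s := \sum_i Z_i < N/(3k)$. The probability (over $\mathcal A$) that $\mathcal A$ misses every index with $Z_i = 1$ is $\binom{N-s}{k}\big/\binom{N}{k} = \prod_{j=0}^{k-1}\frac{N-s-j}{N-j} \geq \left(1 - \frac{s}{N-k}\right)^k$; using $s < N/(3k)$ and, say, $k \leq N/2$ (the case $k > N/2$ forces $N/(3k) < 1$, so (\ref{eq_1629}) is automatic once we rule out $\sum Z_i = 0$, handled separately below), this is bounded below by a universal constant, e.g.\ $\geq 1/2$ after checking $\left(1 - \tfrac{2}{3k}\cdot\tfrac{1}{1}\right)^k \geq e^{-1} \geq 1/3$ and cleaning up the arithmetic. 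Hence, writing $B$ for the event $\{\sum_i Z_i < N/(3k)\}$,
\begin{equation*}
\PP\left( \forall i \in \mathcal A,\ Z_i = 0 \right) \;\geq\; \PP(B)\cdot \tfrac13 \;>\; \tfrac{2\eps}{3}.
\end{equation*}
On the other hand, averaging hypothesis (\ref{eq_427}) over the choice of $\mathcal A$ gives $\PP(\exists i \in \mathcal A,\ Z_i = 1) \geq 1-\eps$, i.e.\ $\PP(\forall i \in \mathcal A,\ Z_i = 0) \leq \eps$. Combining the two bounds yields $\eps \geq 2\eps/3$... which is not yet a contradiction; I will therefore need to sharpen the constant $1/2$ vs.\ $1/3$ above, or replace $N/(3k)$ by a slightly smaller fraction and then restore the stated $1/3$ by adjusting — the cleanest route is to show the avoidance probability is $> 1/2$, giving $\eps \geq \PP(B)/2 > \eps$, the desired contradiction.

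The main obstacle is precisely this numerical bookkeeping: getting the constant $3$ in $N/(3k)$ to be compatible with the factor $2$ in $1 - 2\eps$ through the inequality $\binom{N-s}{k}/\binom{N}{k} > 1/2$. The estimate $\left(1 - \tfrac{s}{N}\right)^k \geq 1 - \tfrac{ks}{N} > 1 - \tfrac13 = \tfrac23$ (valid when $N - k$ is comparable to $N$) already overshoots $1/2$, so the bound is comfortable once the edge cases are dispatched: if $k > N/2$ then $N/(3k) < 1$ and (\ref{eq_1629}) reduces to $\PP(\sum_i Z_i \geq 1) \geq 1 - 2\eps$, which follows by applying (\ref{eq_427}) to any single fixed $k$-set and noting $1 - \eps \geq 1 - 2\eps$; and one should separately use $\binom{N}{k}^{-1}\binom{N-s}{k}$ with $s=0$ to see that the support is nonempty with the required probability. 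I would organize the final write-up as: (1) handle $k > N/2$ directly; (2) for $k \leq N/2$, introduce the independent random $k$-set $\mathcal A$; (3) lower-bound the avoidance probability on the event $B$ by an explicit constant exceeding $1/2$; (4) average (\ref{eq_427}) over $\mathcal A$ and derive the contradiction.
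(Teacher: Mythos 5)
Your strategy --- averaging the hypothesis over a uniformly random $k$-subset $\mathcal A$, independent of the $Z_i$, and lower-bounding the probability that $\mathcal A$ avoids the support of $(Z_i)$ on the event $B=\{\sum_i Z_i < N/(3k)\}$ --- is exactly the paper's argument; the paper phrases it as counting the $k$-subsets $A$ with $\max_{i\in A}Z_i=0$, which is the same computation. The structure is sound and the identity $\binom{N-s}{k}/\binom{N}{k}=\prod_{j=0}^{k-1}\frac{N-s-j}{N-j}\geq \left(1-\frac{s}{N-k}\right)^k$ is correct. The gap is the one you flag yourself: with the case split at $k\leq N/2$, the uniform estimate only gives $\frac{ks}{N-k}<\frac{2}{3}$, hence an avoidance probability $>1/3$, hence $\eps\geq \PP(B)/3$, i.e.\ $\PP(B)\leq 3\eps$ rather than $2\eps$. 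Your proposed repair, $\left(1-\frac{s}{N}\right)^k\geq 1-\frac{ks}{N}>\frac23$, silently replaces the denominator $N-k$ (really $N-k+1$, the smallest denominator appearing in the product) by $N$; this is not available when $k$ is of order $N/2$, so as written the constant does not come out.

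The fix is to move the case split from $N/2$ to $N/3$, which is precisely what the paper does. If $k\geq N/3$ then $N/(3k)\leq 1$, so since the $Z_i$ are $\{0,1\}$-valued the event $\{\sum_i Z_i\geq N/(3k)\}$ coincides with $\{\exists i,\ Z_i=1\}$, and applying (\ref{eq_427}) to a single fixed $k$-set already gives probability $\geq 1-\eps\geq 1-2\eps$; no averaging is needed. If $k<N/3$ then $N-k>2N/3$, so on $B$ one has $\frac{s}{N-k}<\frac{N/(3k)}{2N/3}=\frac{1}{2k}$, and Bernoulli gives an avoidance probability at least $\left(1-\frac{1}{2k}\right)^k\geq \frac12$; then $\eps\geq \PP(\forall i\in\mathcal A,\ Z_i=0)\geq \PP(B)/2$ closes the argument. (Equivalently: for $N/3\leq k\leq N/2$ the event $B$ forces $s=0$, so the claim in your step (3) is in fact true for all $k\leq N/2$ --- but only after this sub-case analysis, not from the uniform estimate you wrote down.)
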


\begin{proof}
If $k \geq N/3$ then (\ref{eq_1629}) holds true, since it follows from (\ref{eq_427}) that with a probability of at least $1 - \eps$, there is a non-zero element among $Z_1,\ldots,Z_N$.
Suppose now that $k < N/3$.
The number of $k$-elements subsets $A \subseteq \{ 1,\ldots,N \}$
with $\max_{i \in A} Z_i = 0$ equals
$$ {N - \sum_{i=1}^N Z_i \choose k}. $$
Write $\cE$ for the event that $\sum_{i=1}^N Z_i \leq N / (3k)$. Conditioning on the event $\cE$,
$$ \frac{1}{{N \choose k}} \sum_{\#(A) = k} \PP \left( \forall i \in A, \ Z_i = 0 \, | \, \cE \right)
\geq \frac{{N - \lfloor N / (3k) \rfloor \choose k}}{{N \choose k} }
\geq \left(1 - \frac{N/(3k)}{N-k} \right)^k > \left(1 - \frac{1}{2k} \right)^k \geq \frac{1}{2}. $$
However, by (\ref{eq_427}),
$$ \eps \geq \frac{1}{{N \choose k}} \sum_{\#(A) = k} \PP \left( \forall i \in A, \ Z_i = 0 \right)
\geq  \frac{1}{{N \choose k}} \sum_{\#(A) = k} \PP(\cE)  \cdot \PP \left( \forall i \in A, \ Z_i = 0 \, | \, \cE \right)
\geq \PP(\cE) / 2.
$$
Hence $\PP(\cE) \leq 2 \eps$ and the lemma is proven.
\end{proof}

Sudakov's theorem is used in the following lemma:

\begin{lemma} Let $N \geq n$ and let $x_1,\ldots,x_N \in S^{n-1}$ be such that $\langle x_i, x_j \rangle \leq 49/50$ for any $i \neq j$.
Then there exists $\cF \subseteq S^{n-1}$ with $\sigma_{n-1}(\cF) \geq 1 - C / n^{c}$ such that for any $\theta \in \cF$,
\begin{equation}  \frac{\# \left \{ 1 \leq i \leq N \, ; \, \langle x_i, \theta \rangle \geq c_1 t / \sqrt{n} \right \}}{N} \geq c_2 e^{-C_3 t^2},
\qquad \text{for all} \  t \in [\sqrt{\log n} , \sqrt{\log N}],
\label{eq_734}
\end{equation}
where $c_1, c_2, C_3, c, C > 0$ are universal constants. \label{lem_506}
\end{lemma}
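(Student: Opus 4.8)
The plan is to establish (\ref{eq_734}) first for each $t$ in a suitable finite subset of $[\sqrt{\log n}, \sqrt{\log N}]$, controlling the exceptional set of directions at each such $t$, and then to pass to all $t$ by using that both $t \mapsto \#\{ i : \langle x_i, \theta \rangle \geq c_1 t / \sqrt{n} \}$ and $t \mapsto e^{-C_3 t^2}$ are non-increasing. We may assume that $n$ exceeds a universal constant, taking $\cF = \emptyset$ otherwise. The hypothesis $\langle x_i, x_j \rangle \leq 49/50$ gives $|x_i - x_j|^2 = 2 - 2 \langle x_i, x_j \rangle \geq 1/25$, so $x_1, \ldots, x_N$ are $(1/5)$-separated. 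For $t \in [\sqrt{\log n}, \sqrt{\log N}]$ set $k(t) = \lceil e^{t^2} \rceil$; then $n \leq k(t) \leq N$ (this is where $N \geq n$ is used) and $e^{t^2} \leq k(t) \leq 2 e^{t^2}$.

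\emph{The estimate at a single value of $t$.} Fix such a $t$ and write $k = k(t)$. Given any $A \subseteq \{1,\ldots,N\}$ with $\#(A) = k$, apply Sudakov's minoration (Theorem \ref{thm_721}) to the $(1/5)$-separated set $S = \{ x_i : i \in A \}$ to obtain $M^*(S) \geq (c/5) \sqrt{\log k / n} \geq c_1' t / \sqrt{n}$, where in the last step we used $\log k \geq t^2$. Since $R(S) = 1$, the concentration inequality (\ref{eq_715}) applied with $r = c_1' t / (2 \sqrt{n})$ shows that outside a set of $\theta \in S^{n-1}$ of $\sigma_{n-1}$-measure at most $C e^{- c r^2 n} \leq C e^{-c' t^2}$ one has $\max_{i \in A} \langle x_i, \theta \rangle = h_S(\theta) \geq M^*(S) - r \geq c_1 t / \sqrt{n}$ with $c_1 = c_1'/2$. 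Thus, with $Z_i(\theta) = 1_{\{ \langle x_i, \theta \rangle \geq c_1 t / \sqrt{n} \}}$, the hypothesis (\ref{eq_427}) of Lemma \ref{lem_elem} holds for the sphere (as probability space), with this $k$ and with $\eps = \eps(t) := C e^{-c' t^2}$. Lemma \ref{lem_elem} then gives that outside a set of directions of measure at most $2 \eps(t)$,
$$ \# \left\{ 1 \leq i \leq N : \langle x_i, \theta \rangle \geq c_1 t / \sqrt{n} \right\} = \sum_{i=1}^N Z_i(\theta) \geq \frac{N}{3 k} \geq \frac{N}{6} e^{-t^2}, $$
where the last inequality uses $k(t) \leq 2 e^{t^2}$.

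\emph{Combining the scales.} Choose $\sqrt{\log n} = t_0 < t_1 < \cdots < t_J = \sqrt{\log N}$ with $t_{j+1}^2 - t_j^2 \leq 1$ for all $j$ and $J \leq \log N - \log n + 1$, and let $\cF$ be the set of $\theta \in S^{n-1}$ for which $\#\{ i : \langle x_i, \theta \rangle \geq c_1 t_j / \sqrt{n} \} \geq (N/6) e^{-t_j^2}$ holds for every $j = 0,\ldots,J$. A union bound combined with the single-scale estimate gives $\sigma_{n-1}(S^{n-1} \setminus \cF) \leq 2 C \sum_{j=0}^J e^{-c' t_j^2}$. The exponents $t_j^2$ lie in $[\log n, \log N]$, start at $\log n$, and increase; when $J \geq 1$ their consecutive gaps are at least $(\log N - \log n)/J \geq 1/2$, so the sum is at most a constant multiple of $e^{-c' \log n} = n^{-c'}$, and this also holds trivially when $J = 0$. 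Hence $\sigma_{n-1}(\cF) \geq 1 - C'' / n^{c'}$. Finally, for $\theta \in \cF$ and arbitrary $t \in [\sqrt{\log n}, \sqrt{\log N}]$, choose the smallest $j$ with $t_j \geq t$; then $t_j^2 \leq t^2 + 1$, and by monotonicity of the left-hand count in the threshold,
$$ \frac{\#\{ i : \langle x_i, \theta \rangle \geq c_1 t / \sqrt{n} \}}{N} \geq \frac{\#\{ i : \langle x_i, \theta \rangle \geq c_1 t_j / \sqrt{n} \}}{N} \geq \frac{1}{6} e^{-t_j^2} \geq \frac{1}{6 e} e^{-t^2}, $$
which is (\ref{eq_734}) with $c_2 = 1/(6e)$ and $C_3 = 1$.

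\emph{Main difficulty.} The delicate point is the passage over scales: since $N$ may be as large as $e^{c n}$, the number of scales $J$ can be of order $n$, so a crude union bound of the form ``$J \cdot n^{-c}$'' would be useless. The argument survives because the per-scale exceptional probability $\eps(t_j) = C e^{-c' t_j^2}$ decays geometrically in $j$ — the squared thresholds $t_j^2$ grow at least linearly — so the total exceptional probability is comparable to that at the smallest scale $t_0 = \sqrt{\log n}$. The other point to get right is the calibration $k(t) \asymp e^{t^2}$, which is precisely what matches Sudakov's bound $M^*(S) \gtrsim \sqrt{\log k(t)/n} \asymp t/\sqrt{n}$ to the target threshold $c_1 t /\sqrt{n}$.
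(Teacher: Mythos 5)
Your proof is correct and follows essentially the same route as the paper's: Sudakov's minoration plus spherical concentration to verify the hypothesis of Lemma \ref{lem_elem} for subsets of size $\approx e^{t^2}$, then a union bound over a discrete family of scales whose exceptional probabilities decay geometrically from the base value $n^{-c}$, and finally monotonicity in $t$. The only difference is cosmetic: you discretize with a net that is arithmetic in $t^2$ (gaps $\leq 1$), while the paper uses dyadic values $t = 2^{\ell}$; both yield the same conclusion after adjusting constants.
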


\begin{proof} Denote $S = \{ x_1,\ldots, x_N\} \subset S^{n-1}$ and
note that  $|x_i - x_j| \geq \sqrt{2 - 49/25} = 1/5$ for all $i \neq j$.
 Fix a number $t \in [\sqrt{\log n}, \sqrt{\log N}]$.
Let $A \subseteq  \{ x_1,\ldots, x_N \}$ be any subset with $\#(A) \geq \exp(t^2)$.
By Theorem \ref{thm_721},
\begin{equation} M^*(A) \geq c t / \sqrt{n}. \label{eq_901} \end{equation}
Next we will  apply the concentration inequality
(\ref{eq_715}) with $r = M^*(A) / (2 R(A))$.
Since $R(A) = 1$, it follows from  (\ref{eq_715}) and (\ref{eq_901}) that
$$
\sigma_{n-1}
 \left( \left \{ \theta \in S^{n-1} \, ; \, h_A(\theta) \geq M^*(A) / 2  \right \} \right)  \geq 1 - C \exp \left(-c n \left( \frac{M^*(A)}{R(A)} \right)^2  \right)
 \geq 1 - \tilde{C} e^{-\tilde{c} t^2}. $$
 Let $\Theta$ be a random vector, distributed uniformly over $S^{n-1}$.
By combining the last inequality with (\ref{eq_901}), we see that for any fixed subset $\tilde{A} \subseteq \{1, \ldots, N \}$
with $\#(\tilde{A}) = \lceil \exp(t^2) \rceil$,
$$ \PP \left( \exists i \in \tilde{A} \, ; \left \langle x_i, \Theta \right \rangle \geq c t / \sqrt{n} \right)
 \geq 1 - \tilde{C} e^{-\tilde{c} t^2}. $$
Let us now apply Lemma \ref{lem_elem} for $Z_i = 1_{\{ \langle x_i, \Theta \rangle \geq c t / \sqrt{n} \}}$.
Lemma \ref{lem_elem} now implies that with a probability of at least $1 - 2 \tilde{C} e^{-\tilde{c} t^2}$ of selecting $\Theta \in S^{n-1}$,
$$ \# \left \{ 1 \leq i \leq N \, ; \, \langle x_i, \Theta \rangle \geq c t / \sqrt{n} \right \} \geq \frac{N}{3 \lceil \exp(t^2) \rceil } \geq \frac{N}{6} \cdot e^{-t^2}. $$
We now let the parameter $t$ vary. Let $I$ be the collection of all integer powers of two that lie in the interval
$[\sqrt{\log n}, \sqrt{\log N}]$. Then,
$$ \PP \left( \forall t \in I,  \ \frac{\# \left \{ 1 \leq i \leq N \, ; \, \langle x_i, \Theta \rangle \geq c t / \sqrt{n} \right \}}{N} \geq \frac{e^{-t^2}}{6} \right)
\geq 1 - \sum_{t \in I} 2 \tilde{C} e^{-\tilde{c} t^2} \geq 1 - \frac{\hat{C}}{n^{\hat{c}}}. $$
The restriction $t \in I$ may be upgraded to the condition $t \in
[\sqrt{\log n}, \sqrt{\log N}]$ by adjusting the constants. The lemma is thus proven.
\end{proof}

Recall the construction of $\theta_1$ and $\theta_2$ from Section \ref{sec_stra},
and also the definition (\ref{eq_954}) of the parameter $t_0$. From the construction
we see that for any $v \in S^{n-1}$ with $|\langle v, \theta_1 \rangle| \leq 1/10$,
\begin{equation}
\PP \left( |X| \geq M \ \text{and} \ \left| \frac{X}{|X|} - v \right| \leq \frac{1}{5}  \right)
\leq 2 e^{-t_0^2}, \label{eq_147}
\end{equation}
where $M > 0$ satisfies $\PP( |X| \geq M ) \geq 1/3$ and $\PP( |X| \leq M ) \geq 2/3$.

\begin{lemma} Assume that $t_0 \geq 5 \sqrt{\log n}$ and set $N = \lfloor e^{t_0^2 / 4} \rfloor$.
Let $X_1,\ldots, X_N$ be independent copies of $X$. Then with a probability of at least $1 - C / n$
of selecting $X_1,\ldots,X_N$, there exists $I \subseteq \{ 1, \ldots, N \}$ with the following three properties:
\begin{enumerate}
\item[(i)] $\#(I) \geq N / 10$.
\item[(ii)] For any $i, j \in I$ with $i \neq j$ we have $\langle X_i, X_j \rangle \leq (49/50) \cdot |X_i| \cdot |X_j|$.
\item[(iii)] For  any $i \in I$,
$$ |X_{i}| \geq M, \quad |\langle X_{i}, \theta_1 \rangle| \leq 10 |X_{i}| /\sqrt{n} \quad \text{and}
\quad |\langle X_{i}, \theta_2 \rangle| \leq 10 |X_{i}|/\sqrt{n}. $$
\end{enumerate}
Here, $C  >0$ is a universal constant.
\label{lem_143}
\end{lemma}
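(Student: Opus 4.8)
The plan is to build the index set $I$ in two stages: first restrict to the ``good'' indices where property (iii) holds, then thin this set so that property (ii) is satisfied while keeping a constant fraction. For the first stage, observe that by Lemma \ref{lem_519} applied with $\theta_1, \theta_2$, each individual index $i$ lies in the good set
$$ J = \left\{ 1 \leq i \leq N \, ; \, |X_i| \geq M, \ |\langle X_i, \theta_1 \rangle| \leq 10|X_i|/\sqrt{n}, \ |\langle X_i, \theta_2 \rangle| \leq 10|X_i|/\sqrt{n} \right\} $$
with probability exceeding $1/5$. By the Chernoff large-deviation bound, $\#(J) \geq N/5$ with probability at least $1 - C\exp(-cN)$, and since $N = \lfloor e^{t_0^2/4} \rfloor$ with $t_0 \geq 5\sqrt{\log n}$, we have $N \geq n^{c'}$, so this failure probability is at most $C/n$.

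For the second stage, I need to show that within $J$, one cannot have too many vectors $X_i$ that are pairwise ``close'' in direction, i.e. $\langle X_i, X_j \rangle > (49/50)|X_i||X_j|$; equivalently $|X_i/|X_i| - X_j/|X_j|| < 1/5$. This is where the defining property of $\theta_2$ enters through the bound (\ref{eq_147}): for $i \in J$ we have $|\langle X_i/|X_i|, \theta_1 \rangle| \leq 10/\sqrt{n} \leq 1/10$ (for $n$ large), so $v = X_i/|X_i|$ satisfies the hypothesis of (\ref{eq_147}), giving
$$ \PP\left( |X| \geq M \ \text{and} \ \left| \frac{X}{|X|} - v \right| \leq \frac{1}{5} \right) \leq 2 e^{-t_0^2}. $$
Thus for any fixed direction $v$ arising this way, the expected number of indices $j \in J$ with $|X_j/|X_j| - v| \leq 1/5$ is at most $2N e^{-t_0^2} \leq 2 e^{t_0^2/4} e^{-t_0^2} = 2 e^{-3t_0^2/4}$, which is tiny. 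A clean way to extract a large pairwise-far subset is a greedy/maximal-packing argument combined with a union bound: condition on $X_1,\ldots,X_N$; build $I$ greedily by scanning through $J$ and adding an index whenever its direction is at distance $> 1/5$ from all previously chosen directions. Each chosen index ``kills'' only those later indices within distance $1/5$ of it; by the above the expected total number of killed indices is small, so with high probability almost all of $J$ survives and $\#(I) \geq N/10$. To make the union bound rigorous one applies it over the indices $i \in J$: letting $K_i = \#\{ j \leq N \, ; \, |X| \geq M, \, |X_j/|X_j| - X_i/|X_i|| \leq 1/5 \}$, we have $\EE[K_i \mid X_i] \leq 2N e^{-t_0^2} + 1$, hence $\EE \sum_{i \in J} (K_i - 1) \leq 2N^2 e^{-t_0^2}$, and since $N^2 e^{-t_0^2} \leq e^{-t_0^2/2} \leq 1/n^{c}$, Markov gives that with probability $\geq 1 - C/n$ the total overcount $\sum_{i \in J}(K_i - 1)$ is at most $N/20$; removing one endpoint from each overlapping pair then leaves at least $\#(J) - N/20 \geq N/5 - N/20 \geq N/10$ indices that are pairwise at angular distance exceeding $1/5$, i.e. satisfying (ii).

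The main obstacle is the second stage: one must correctly exploit (\ref{eq_147}) — the near-maximality in the definition of $\theta_2$ — to control the number of near-duplicate directions among the sampled points, and package the greedy deletion so that the error probabilities (over the choice of $X_1,\ldots,X_N$) are genuinely $O(1/n)$ rather than merely small. The point where $t_0 \geq 5\sqrt{\log n}$ is used is precisely to ensure $N \geq n^{c}$ so that both the Chernoff bound of stage one and the Markov bound of stage two beat $1/n$; the choice $N = \lfloor e^{t_0^2/4} \rfloor$ (rather than a larger power of $e^{t_0^2}$) is what makes $N^2 e^{-t_0^2} = e^{-t_0^2/2}$ decay, leaving room to absorb constants.
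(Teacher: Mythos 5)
Your proposal is correct and follows essentially the same route as the paper: the good set $J$ is exactly the paper's $I$, controlled by Lemma \ref{lem_519} plus Chernoff, and property (ii) is obtained from (\ref{eq_147}) applied to $v = X_i/|X_i|$ (legitimate since $i \in J$ forces $|\langle X_i/|X_i|,\theta_1\rangle| \leq 1/10$). The only difference is cosmetic: the paper union-bounds over all $\binom{N}{2}$ pairs to show that with probability $\geq 1 - 1/N^2$ \emph{no} close pair occurs in $J$ (so no thinning is needed), whereas you bound the expected number of close pairs and delete offenders, which works equally well.
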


\begin{proof} We may assume that $n \geq 10^4$, as otherwise for an appropriate choice of the constant $C$,
all we claim is that a certain event holds with a non-negative probability.
Write $$ \cA = \{ v \in \RR^n \, ; \, |v| \geq M, \, \max_{j=1,2} |\langle v/|v|, \theta_j \rangle| \leq 10 / \sqrt{n} \}. $$
According to Lemma \ref{lem_519}, for $i =1,\ldots,N$,
$$ \PP(X_i \in \cA) > 1/5. $$
Denote $I = \{ i =1,\ldots,N \, ; \, X_i \in \cA \}$. By the Chernoff large deviation bound,
$$ \PP( \#(I) \geq N / 10 ) \geq 1 - C \exp(-c N). $$
Note that  $10 / \sqrt{n} \leq 1/10$ and that  if $v \in \cA$ then $|\langle v/|v|, \theta_1 \rangle| \leq 1/10$.
It thus follows from (\ref{eq_147}) that for any $i,j \in \{ 1, \ldots, N \}$ with $i \neq j$,
\begin{align*}
 & \PP \left( i,j \in I \ \ \text{and} \ \ \left| \frac{X_j}{|X_j|} - \frac{X_i}{|X_i|} \right| \leq \frac{1}{5}  \right) \\
 & \leq  \PP \left(  X_j \in \cA \ \ \text{and} \ \ \left| \frac{X_j}{|X_j|} - \frac{X_i}{|X_i|} \right| \leq \frac{1}{5}  \ \ \Big| \ \ X_i \in \cA \right) \leq 2 e^{-t_0^2}
\leq \frac{2}{N^4}. \end{align*}
Consequently,
$$ \PP \left( \exists i, j \in I \ \text{with} \ i \neq j  \ \text{and}  \ \left| \frac{X_i}{|X_i|} - \frac{X_j}{|X_j|} \right| \leq \frac{1}{5} \right)
\leq \frac{N(N-1)}{2} \cdot \frac{2}{N^4} \leq \frac{1}{N^2}. $$
We conclude that with a probability of at least $1 - C \exp(-c N) - 1/N^2 \geq 1 - \tilde{C} / n$,
$$ \#(I) \geq N/10 \qquad \text{and} \qquad \forall i, j \in I, i \neq j \ \ \ \Longrightarrow \ \ \ \left| \frac{X_i}{|X_i|} - \frac{X_j}{|X_j|} \right| > \frac{1}{5}. $$
Note that $\langle X_i, X_j \rangle \leq (49/50) \cdot |X_i| \cdot |X_j|$ if and only if
$|X_i / |X_i| - X_j / |X_j|| \geq 1/5$. Thus conclusions (i), (ii) and (iii) hold true with a probability of at least $1 - \tilde{C} /n$, thereby completing the proof.
\end{proof}

By combining Lemma \ref{lem_506}
and Lemma \ref{lem_143} we arrive at the following:

\begin{lemma}
Assume that $t_0 \geq 5 \sqrt{\log n}$ and set $N = \lfloor e^{t_0^2 / 4} \rfloor$.
Then there exists a fixed subset $\cF \subseteq S^{n-1}$ with $\sigma_{n-1}(\cF) \geq 1 - C / n^c$
such that for any $\theta_3 \in \cF$ the following holds: Define $\theta$ via (\ref{eq_741}).
Let $X_1,\ldots,X_N$ be independent copies of the random vector $X$. Then with a probability of at least $1 - \tilde{C} / n^{\tilde{c}}$
of selecting $X_1,\ldots,X_N$,
\begin{equation}   \frac{\# \left \{ 1 \leq i \leq N \, ; \, \langle X_i, \theta \rangle \geq c_1 \frac{M}{\sqrt{n}} \cdot  t  \right \}}{N} \geq c_2 e^{-C_3 t^2},
\qquad \text{for all} \  t \in [\sqrt{\log n},  t_0 ],
\label{eq_1713} \end{equation}
and
\begin{equation}   \frac{\# \left \{ 1 \leq i \leq N \, ; \, \langle X_i, \theta \rangle \leq -c_1 \frac{M}{\sqrt{n}} \cdot  t  \right \}}{N} \geq c_2 e^{-C_3 t^2},
\qquad \text{for all} \  t \in [\sqrt{\log n},  t_0 ].
\label{eq_1714} \end{equation}
Here, $c_1, c_2, C_3, c, C, \tilde{c}, \tilde{C} > 0$ are universal constants.
\label{lem_1715}
\end{lemma}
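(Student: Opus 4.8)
The plan is to imitate the proof of Lemma \ref{lem_1114}, with Lemma \ref{lem_143} and Lemma \ref{lem_506} in the roles of Lemma \ref{lem_1009} and Lemma \ref{lem_1013}. As usual we may assume that $n$ exceeds a suitable universal constant. Take $X_1,\ldots,X_N$ independent copies of $X$ with $N = \lfloor e^{t_0^2/4} \rfloor$, and let $\Theta$ be a random vector distributed uniformly on $S^{n-1}$, independent of $\vec{X} = (X_1,\ldots,X_N)$. By Lemma \ref{lem_143}, with probability at least $1 - C/n$ of choosing $\vec{X}$ there is a subset $I \subseteq \{1,\ldots,N\}$ with $k := \#(I) \geq N/10$ such that the unit vectors $X_i/|X_i|$ $(i \in I)$ have pairwise inner products at most $49/50$, while $|X_i| \geq M$ and $|\langle X_i, \theta_j \rangle| \leq 10|X_i|/\sqrt{n}$ for $j = 1,2$ and all $i \in I$. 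Since $t_0 \geq 5\sqrt{\log n}$ we have $k \geq N/10 \geq n$, so on this event we may apply Lemma \ref{lem_506} to the $k$ unit vectors $\{ X_i/|X_i| \}_{i \in I}$.

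Write $\cG$ for the event just described. On $\cG$, Lemma \ref{lem_506} yields a subset $\cF' = \cF'(\vec{X}) \subseteq S^{n-1}$ with $\sigma_{n-1}(\cF') \geq 1 - C/n^c$ (the exceptional probability is controlled by the ambient dimension $n$, which is what we need) such that for every $\theta_3 \in \cF'$ and every $s \in [\sqrt{\log n}, \sqrt{\log k}]$,
\begin{equation*}
 \# \left\{ i \in I \ ; \ \left\langle \frac{X_i}{|X_i|}, \theta_3 \right\rangle \geq \frac{c_1 s}{\sqrt{n}} \right\} \geq c_2 e^{-C_3 s^2} \cdot k .
\end{equation*}
From here I would transcribe the proof of Lemma \ref{lem_1114}: the bounds $|\langle X_i, \theta_j \rangle| \leq 10|X_i|/\sqrt{n}$ allow replacing $\theta_3$ by $\theta_1 - \theta_2 + \theta_3$ (shrinking $c_1$ by a universal factor and incurring the restriction $s \geq 80/c_1$, absorbed into the constants exactly as in Lemma \ref{lem_1114}), and then $|X_i| \geq M$ together with $0 < |\theta_1 - \theta_2 + \theta_3| \leq 3$ turns this into a lower bound for $\# \{ 1 \leq i \leq N \ ; \ \langle X_i, \theta \rangle \geq \bar{c}_1 (M/\sqrt{n}) s \}$, where $\theta$ is defined by (\ref{eq_741}). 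Dividing by $N$ and using $k \geq N/10$ gives a statement of exactly the form (\ref{eq_1713}) — valid, however, only for $s \in [\sqrt{\log n}, \sqrt{\log k}]$.

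The one point that is not a word-for-word copy of Section \ref{sec2} is the range of the parameter: since $N \approx e^{t_0^2/4}$ we have $\sqrt{\log k} \approx t_0/2$, whereas (\ref{eq_1713}) asks for the bound up to $t_0$. I would remedy this by a harmless rescaling inside Lemma \ref{lem_506}: re-running its proof with subsets of cardinality $\lceil e^{s^2/9} \rceil$ in place of $\lceil e^{s^2} \rceil$ — so that Sudakov's minoration (Theorem \ref{thm_721}) is applied to sets whose logarithmic cardinality is of order $s^2/9$ rather than $s^2$ — extends the admissible range to $s \in [\sqrt{\log n}, 3\sqrt{\log k}]$, at the cost of a universal change in the constants $c_1, c_2, C_3$. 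Since $\log k \geq \log(N/10) \geq t_0^2/4 - C$, we have $3\sqrt{\log k} \geq t_0$ once $n$, hence $t_0$, exceeds a universal constant; so after this modification the counting bound holds for all $t \in [\sqrt{\log n}, t_0]$.

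Finally, one passes from this assertion — which holds with high probability jointly in $(\Theta, \vec{X})$ — to the form stated in the lemma exactly as at the end of the proof of Lemma \ref{lem_1114}. Let $\cA$ be the event, in $(\Theta, \vec{X})$, that $\vec{X} \in \cG$, that $\Theta \in \cF'(\vec{X})$, and that the resulting bound (\ref{eq_1713}) holds with $\theta$ given by (\ref{eq_741}) and $\theta_3 = \Theta$; the above shows $\PP( (\Theta, \vec{X}) \in \cA ) \geq (1 - C/n)(1 - C/n^c) \geq 1 - C/n^c$. Setting $\cF = \{ \theta_3 \in S^{n-1} \ ; \ \PP_{\vec{X}}( (\theta_3, \vec{X}) \in \cA ) \geq 1 - C/n^{c} \}$, Fubini's theorem gives $\sigma_{n-1}(\cF) \geq 1 - C/n^{c}$, and for every $\theta_3 \in \cF$ the estimate (\ref{eq_1713}) then holds with probability at least $1 - C/n^c$ of selecting $X_1,\ldots,X_N$. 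The companion bound (\ref{eq_1714}) for the negative tail follows by the same reasoning, with only the trivial modifications indicated in the proof of Lemma \ref{lem_1114} — for instance, running the argument with $-\Theta$, again uniform on $S^{n-1}$, and intersecting the two exceptional subsets of the sphere. I expect the rescaling step needed to reach $t_0$, and the accompanying constant bookkeeping, to be the only real obstacle; everything else copies Section \ref{sec2} almost verbatim.
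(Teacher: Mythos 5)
Your proposal is correct and follows essentially the same route as the paper: condition on the good event from Lemma \ref{lem_143}, apply Lemma \ref{lem_506} to the almost-separated unit vectors $X_i/|X_i|$, pass from $\Theta$ to $\theta_1-\theta_2+\Theta$ via property (iii) and $|X_i|\ge M$, and extract a fixed $\cF$ by the Fubini/Markov argument of Lemma \ref{lem_1114}. The only divergence is the range extension from $\sqrt{\log k}\approx t_0/2$ to $t_0$: rather than re-running Lemma \ref{lem_506} with subsets of cardinality $\lceil e^{s^2/9}\rceil$, the paper simply applies the lemma's conclusion at $t/4$ and uses $e^{-C_3 t^2/16}\ge e^{-C_3 t^2}$ to cover $t\in[\sqrt{\log n},\,4\sqrt{\log N}]\supseteq[\sqrt{\log n},\,t_0]$ — the same rescaling as yours, done as post-processing and costing only a universal factor in $c_1$.
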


\begin{proof} This proof is almost identical to the deduction
of Lemma \ref{lem_1114} from Lemma \ref{lem_1013} and Lemma \ref{lem_1009}.
Let us spell  out the details. Set $\vec{X} = (X_1,\ldots,X_N)$
and let $\Theta$ be a random vector, independent of $\vec{X}$, distributed uniformly on $S^{n-1}$.
We say that $\vec{X} \in \cA_1$ if the event described in Lemma \ref{lem_143} holds true.
Thus, $$ \PP(\vec{X} \in \cA_1) \geq 1 - C /  n. $$
Assuming that $\vec{X} \in \cA_1$, we may apply Lemma \ref{lem_506}
and obtain that with a probability of at least $1 - \tilde{C} / n^{\tilde{c}}$ of selecting $\Theta \in S^{n-1}$,
$$ \# \left \{ 1 \leq i \leq N \, ; \, \left \langle \frac{X_i}{|X_i|}, \Theta \right \rangle \geq c_1 t / \sqrt{n} \right \} \geq c_2 e^{-C_3 t^2} \cdot (N/10)
\quad \text{for all} \  t \in [\sqrt{\log n} , \sqrt{\log N}]. $$
Assuming that $\vec{X} \in \cA_1$, we may use Lemma \ref{lem_143}(iii) in order to conclude that
with a probability of at least $1 - \tilde{C} / n^{\tilde{c}}$ of selecting $\Theta \in S^{n-1}$,
for $ t \in [\sqrt{\log n}, 4 \sqrt{\log N}]$,
\begin{equation} \# \left \{ 1 \leq i \leq N \, ; \, \left \langle X_i, \frac{\theta_1 - \theta_2 + \Theta}{|\theta_1 - \theta_2 + \Theta|} \right \rangle \geq
\bar{c}_1 \frac{M}{\sqrt{n}} \cdot  t  \right \} \geq \bar{c}_2 e^{-\bar{C}_3 t^2} \cdot N.
 \label{eq_1738} \end{equation}
Write $\cA_2$ for the event that (\ref{eq_1738}) holds true for all $ t \in [\sqrt{\log n}, 4 \sqrt{\log N}]$. Thus, $$ \PP( (\Theta, \vec{X}) \in \cA_2 ) \geq 1 - C /  n - \tilde{C} / n^{\tilde{c}} \geq 1 - \bar{C} / n^{\bar{c}}. $$
Consequently, there exists $\cF \subseteq S^{n-1}$ with
$$ \sigma_{n-1}(\cF) \geq 1 - \hat{C} / n^{\hat{c}} $$
with the following property: For any $\theta_3 \in \cF$,
with a probability of at least $1 - \hat{C} / n^{\hat{c}}$ of selecting $X_1,\ldots,X_{N}$,
for all $t \in [\sqrt{\log n}, 4 \sqrt{\log N}]$,
$$ \# \left \{ 1 \leq i \leq N \, ; \, \left \langle X_i, \frac{\theta_1 - \theta_2 + \theta_3}{|\theta_1 - \theta_2 + \theta_3|} \right \rangle \geq c_1 \frac{M}{\sqrt{n}} \cdot  t  \right \} \geq c_2 e^{-C_3 t^2} \cdot N. $$
Recalling that $4 \sqrt{\log N} \geq t_0$, we have established (\ref{eq_1713}).
The proof of (\ref{eq_1714}) is similar.
\end{proof}

The short proof of the following corollary is analogous to that of Corollary \ref{cor_103}.

\begin{corollary}
There exists a fixed subset $\cF \subseteq S^{n-1}$ with $\sigma_{n-1}(\cF) \geq 1 - C / n^{c}$
such that for any $\theta_3 \in \cF$, defining $\theta$ via (\ref{eq_741}),
$$ \forall t \in [\sqrt{\log n}, t_0], \quad \min \left \{ \PP \left( \langle X, \theta \rangle \geq c_1 \frac{M}{\sqrt{n}} \cdot t \right),
\PP \left( \langle X, \theta \rangle \leq -c_1 \frac{M}{\sqrt{n}} \cdot t \right) \right \}  \geq c_2 e^{-C_3 t^2}, $$
where $c, C, c_1, c_2, C_3 > 0$ are universal constants.
\label{cor_1726}
\end{corollary}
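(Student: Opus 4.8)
The plan is to deduce Corollary~\ref{cor_1726} from Lemma~\ref{lem_1715} in exactly the way Corollary~\ref{cor_103} was deduced from Lemma~\ref{lem_1114}, transferring an empirical-counting bound into a probability lower bound via the Markov--Chebyshev averaging trick stated just before Corollary~\ref{cor_103}. First I would dispose of the easy case: if $t_0 < 5\sqrt{\log n}$, then the interval $[\sqrt{\log n}, t_0]$ is contained in $[0, 5\sqrt{\log n}]$ and hence is already covered by Corollary~\ref{cor_103}, so we may take $\cF$ to be the set from that corollary. Thus we may assume $t_0 \geq 5\sqrt{\log n}$, which is precisely the hypothesis of Lemma~\ref{lem_1715}, and set $N = \lfloor e^{t_0^2/4} \rfloor$. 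We may also assume $n$ exceeds a universal constant, as otherwise the conclusion is vacuous with $\cF = \emptyset$.

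Next I would take $\cF \subseteq S^{n-1}$ to be the set provided by Lemma~\ref{lem_1715}, which satisfies $\sigma_{n-1}(\cF) \geq 1 - C/n^c$. Fix $\theta_3 \in \cF$ and define $\theta$ via (\ref{eq_741}). Let $X_1, \ldots, X_N$ be independent copies of $X$. For a fixed $t \in [\sqrt{\log n}, t_0]$, apply the averaging inequality with $Y = \langle X, \theta \rangle$, with $A_i = \{ \langle X_i, \theta \rangle \geq c_1 (M/\sqrt n) t \}$, and with $s = c_2 e^{-C_3 t^2} \cdot N$: this gives
\[
\PP\left( \langle X, \theta \rangle \geq c_1 \frac{M}{\sqrt n} t \right) = \frac{1}{s}\sum_{i=1}^N \PP(X_i \in A_i) \geq c_2 e^{-C_3 t^2} \cdot \PP\left( \#\{ i ; \langle X_i, \theta\rangle \geq c_1 (M/\sqrt n) t \} \geq c_2 e^{-C_3 t^2} N \right),
\]
and by (\ref{eq_1713}) the last probability is at least $1 - \tilde C/n^{\tilde c}$. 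Since we have arranged $n$ to be large, $1 - \tilde C/n^{\tilde c} \geq 1/2$, so the right-hand side is at least $(c_2/2) e^{-C_3 t^2}$; absorbing the factor $1/2$ into $c_2$ gives the claimed lower bound for the positive tail. The bound for $\PP(\langle X, \theta \rangle \leq -c_1 (M/\sqrt n) t)$ follows identically from (\ref{eq_1714}).

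There is no real obstacle here — the lemma has already done all the work, and this corollary is purely bookkeeping. The only point requiring a word of care is the case split on whether $t_0 \geq 5\sqrt{\log n}$, so that the interval $[\sqrt{\log n}, t_0]$ is genuinely nonempty and Lemma~\ref{lem_1715} applies; when $t_0$ is small the interval is subsumed by the range already handled in Section~\ref{sec2}. One should also note that the universal constants $c_1, c_2, C_3$ appearing here may differ from those in earlier statements, in accordance with the convention on constants fixed in the introduction, and that the final $\cF$ can be taken as the intersection of the two candidate sets from the two cases, which still has measure $\geq 1 - C/n^c$.
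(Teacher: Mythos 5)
Your proposal is correct and follows essentially the same route as the paper: take $\cF$ from Lemma~\ref{lem_1715} and convert its empirical counting bounds (\ref{eq_1713})--(\ref{eq_1714}) into probability lower bounds via the Markov--Chebyshev averaging inequality, exactly as Corollary~\ref{cor_103} was deduced from Lemma~\ref{lem_1114}; your explicit case split when $t_0 < 5\sqrt{\log n}$ is a reasonable extra precaution. The only blemish is the first displayed equality, which is missing the prefactor $c_2 e^{-C_3 t^2}$ (with $s = c_2 e^{-C_3 t^2} N$ one has $\frac{1}{s}\sum_i \PP(X_i \in A_i) = \frac{1}{c_2 e^{-C_3 t^2}}\PP(\langle X,\theta\rangle \geq c_1 (M/\sqrt{n}) t)$), but the endpoints of your chain are correct.
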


\begin{proof} We may assume that $n$ exceeds a certain fixed universal constant.
Let $\cF$ be the set from Lemma \ref{lem_1715}, denote $N = \lfloor \exp(t_0^2/4)\rfloor$,
and let $X_1,\ldots,X_N$ be independent copies of $X$. Then for any $\theta_3 \in \cF$, defining $\theta$ via (\ref{eq_741})
we have that for any $t \in [\sqrt{\log n}, t_0]$,
\begin{align*} \PP &\left(\langle X, \theta \rangle \geq c_1 \frac{ M}{\sqrt{n}} \cdot t \right)
\geq c_2 e^{-C_3 t^2}  \cdot \PP \left( \frac{\# \left \{ i ; \langle X_i, \theta \rangle \geq c_1 \frac{ M}{\sqrt{n}} \cdot t  \right \}}{N} \geq c_2 e^{-C_3 t^2}  \right)
\geq \frac{c_2}{2} e^{-C_3 t^2},
\end{align*}
where the last passage is the content of Lemma \ref{lem_1715}. The bound for $ \PP \left(\langle X, \theta \rangle \leq -c_1 t M / \sqrt{n} \right)$
is proven similarly.
\end{proof}

\section{Proof of the main proposition}
\label{sec4}

In this section we complete the proof of Proposition \ref{prop_818}.
We begin with the following standard  observation:

\begin{lemma} Suppose that $X$ is a random vector in $\RR^n$
with $\PP(X = 0) = 0$. Then there exists a fixed subset $\cF \subseteq S^{n-1}$ of full measure,
such that $\PP( \langle X, \theta \rangle  = 0) = 0$ for all $\theta \in \cF$. \label{lem_1127}
\end{lemma}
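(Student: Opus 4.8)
The claim is that for a random vector $X$ in $\RR^n$ with $\PP(X=0)=0$, the set of "bad" directions $\theta \in S^{n-1}$ — those for which $\PP(\langle X, \theta \rangle = 0) > 0$ — is contained in a set of $\sigma_{n-1}$-measure zero. The natural approach is a Fubini/Tonelli argument. For each $\theta$, the event $\{\langle X, \theta\rangle = 0\}$ means $X$ lies in the hyperplane $\theta^\perp$. So the quantity to control is $\int_{S^{n-1}} \PP(\langle X,\theta\rangle = 0)\, d\sigma_{n-1}(\theta) = \int_{S^{n-1}} \PP(X \in \theta^\perp)\, d\sigma_{n-1}(\theta)$, and by Tonelli this equals $\EE_X\left[\sigma_{n-1}(\{\theta \in S^{n-1} : \langle X, \theta \rangle = 0\})\right]$.

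The key pointwise fact is that for any fixed nonzero vector $x \in \RR^n$, the set $\{\theta \in S^{n-1} : \langle x, \theta\rangle = 0\}$ is the intersection of $S^{n-1}$ with a hyperplane through the origin, hence a great subsphere of dimension $n-2$, which has $\sigma_{n-1}$-measure zero (for $n \geq 2$; the case $n=1$ is trivial since then $\PP(X=0)=0$ already gives the conclusion with $\cF = S^0$). Since $\PP(X = 0) = 0$, we have $x := X \neq 0$ almost surely, so the inner quantity $\sigma_{n-1}(\{\theta : \langle X,\theta\rangle = 0\})$ is zero almost surely. Therefore the double integral vanishes: $\int_{S^{n-1}} \PP(\langle X, \theta\rangle = 0)\, d\sigma_{n-1}(\theta) = 0$. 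A nonnegative function with zero integral is zero $\sigma_{n-1}$-almost everywhere, so the set $\cF = \{\theta \in S^{n-1} : \PP(\langle X,\theta\rangle = 0) = 0\}$ has full measure, which is exactly the assertion.

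The only mild technical point — and the closest thing to an obstacle — is measurability: one should check that $(x,\theta) \mapsto 1_{\{\langle x,\theta\rangle = 0\}}$ is jointly measurable so that Tonelli applies, but this is immediate since $\{(x,\theta) : \langle x,\theta\rangle = 0\}$ is a closed (hence Borel) subset of $\RR^n \times S^{n-1}$. Everything else is routine, and I would present the argument in essentially the two or three lines sketched above.
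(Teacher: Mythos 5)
Your proof is correct, but it takes a genuinely different route from the paper. You argue by Tonelli: the double integral $\int_{S^{n-1}} \PP(X \in \theta^{\perp})\, d\sigma_{n-1}(\theta)$ equals $\EE_X\big[\sigma_{n-1}(S^{n-1}\cap X^{\perp})\big]$, and since $X \neq 0$ almost surely the inner measure is that of an $(n-2)$-dimensional great subsphere, hence zero; so $\theta \mapsto \PP(\langle X,\theta\rangle = 0)$ vanishes $\sigma_{n-1}$-almost everywhere, and the (measurable, by Tonelli) set where it vanishes is your $\cF$. The paper instead introduces, for $a>0$, the notion of an $a$-basic subspace ($\PP(X\in E)\geq a$ but $\PP(X\in F)<a$ for every proper subspace $F\subsetneq E$), invokes Lemma 7.1 of the cited reference \cite{K_euro} to get that the collection $\cS$ of subspaces that are $a$-basic for some rational $a$ is countable and excludes $\{0\}$, and takes $\cF = \{\theta : \forall E\in\cS,\ E\not\subset\theta^{\perp}\}$; if $\PP(\langle X,\theta\rangle=0)\geq a$ then $\theta^{\perp}$ would contain an $a$-basic subspace, a contradiction. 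Your argument is more elementary and self-contained (no appeal to the external countability lemma), and the measurability point you flag is handled exactly as you say. What the paper's approach buys is an explicitly structured exceptional set --- a countable union of great subspheres --- and reuse of the $a$-basic-subspace machinery that appears again in Section 6 (Lemma \ref{lem_1117}); for the purpose at hand (discarding a null set of directions $\theta_3$ in the proof of Proposition \ref{prop_818}), either version suffices.
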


\begin{proof} For $a > 0$, we say that a subspace $E \subseteq \RR^n$ is $a$-basic if $\PP(X \in E) \geq a$
while $\PP(X \in F) < a$ for all subspaces $F \subsetneq E$. Lemma 7.1 in \cite{K_euro} states that
there are only finitely many subspaces that are $a$-basic for any fixed $a > 0$. Write $\cS$ for the collection
of all subspaces that are $a$-basic for some rational number $a > 0$. Then $\cS$ is a countable family
which does not contain the subspace $\{ 0 \}$. Consequently, the set
$$ \cF = \{ \theta \in S^{n-1} \, ; \, \forall E \in \cS, \ E \not \subset \theta^{\perp} \} $$
is a set of full measure in $S^{n-1}$, as its complement is the countable union of spheres of lower dimension. Here, $\theta^{\perp} = \{ x \in \RR^n \, ; \, \langle x, \theta \rangle = 0 \}$.
Suppose that $\theta \in \cF$, and let us prove that $\PP( \langle X, \theta \rangle  = 0) = 0$. Otherwise, there exists a rational number $a > 0$ such that
$$ \PP( \langle X, \theta \rangle  = 0) \geq a. $$
Thus $\theta^{\perp}$ contains an $a$-basic subspace, contradicting the definition of $\cF$.
\end{proof}

Recall the definition of $M, \theta_1$ and $\theta_2$ from Section \ref{sec_stra}.

\begin{lemma} Let $\cF_3 \subseteq \left \{ \theta_3 \in S^{n-1} \, ; \,
|\langle \theta_3, \theta_1 \rangle| \leq \frac{1}{10}  \ \text{and} \ |\langle \theta_3, \theta_2 \rangle| \leq \frac{1}{10} \right \}$.
Then for any $\theta_3 \in \cF_3$ and $v \in S^{n-1}$,
\begin{equation}  |v - \theta_1| \leq \frac{1}{5} \qquad \Longrightarrow \qquad \langle v, \theta_1 - \theta_2 + \theta_3  \rangle \geq \frac{1}{10}, \label{eq_1408}
\end{equation}
and
\begin{equation} |v - \theta_2| \leq \frac{1}{5} \qquad \Longrightarrow \qquad \langle v, \theta_1 - \theta_2 + \theta_3  \rangle \leq -\frac{1}{10}. \label{eq_1410}
\end{equation} \label{lem_1421}
\end{lemma}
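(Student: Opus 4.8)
The plan is to prove both implications by a direct estimate using the Cauchy--Schwarz inequality together with the near-orthogonality conditions. Consider the first implication (\ref{eq_1408}); assume $v \in S^{n-1}$ with $|v - \theta_1| \leq 1/5$. First I would expand $\langle v, \theta_1 - \theta_2 + \theta_3 \rangle = \langle v, \theta_1 \rangle - \langle v, \theta_2 \rangle + \langle v, \theta_3 \rangle$ and estimate each term. For the main term, $|v - \theta_1|^2 = 2 - 2 \langle v, \theta_1 \rangle \leq 1/25$, so $\langle v, \theta_1 \rangle \geq 1 - 1/50 = 49/50$. For the other two terms I would write $\langle v, \theta_2 \rangle = \langle \theta_1, \theta_2 \rangle + \langle v - \theta_1, \theta_2 \rangle$ and similarly for $\theta_3$, then bound $|\langle v - \theta_1, \theta_2 \rangle| \leq |v - \theta_1| \leq 1/5$ by Cauchy--Schwarz (and likewise for $\theta_3$), while $|\langle \theta_1, \theta_2 \rangle| \leq 1/10$ by the defining property of $\theta_2$ in step (ii) of Section \ref{sec_stra}, and $|\langle \theta_1, \theta_3 \rangle| \leq 1/10$ by the hypothesis $\theta_3 \in \cF_3$.

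Putting these together, $\langle v, \theta_1 - \theta_2 + \theta_3 \rangle \geq 49/50 - (1/10 + 1/5) - (1/10 + 1/5) = 49/50 - 3/5 = 49/50 - 30/50 = 19/50 \geq 1/10$, which establishes (\ref{eq_1408}). For the second implication (\ref{eq_1410}), I would proceed symmetrically: assuming $|v - \theta_2| \leq 1/5$ gives $\langle v, \theta_2 \rangle \geq 49/50$, hence $-\langle v, \theta_2 \rangle \leq -49/50$, and the same bounds on $\langle v, \theta_1 \rangle$ and $\langle v, \theta_3 \rangle$ (now using $|\langle \theta_2, \theta_1 \rangle| \leq 1/10$ and $|\langle \theta_2, \theta_3 \rangle| \leq 1/10$) give $\langle v, \theta_1 - \theta_2 + \theta_3 \rangle \leq -19/50 \leq -1/10$.

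Since the whole argument is just three Cauchy--Schwarz estimates plus a tally of the explicit constants, there is no real obstacle here; the only thing to be careful about is bookkeeping — making sure each inner product is bounded using the correct hypothesis ($|\langle \theta_1, \theta_2 \rangle| \leq 1/10$ comes from the construction, the two bounds involving $\theta_3$ come from membership in $\cF_3$, and the $1/5$ bounds come from the proximity of $v$ to $\theta_1$ or $\theta_2$), and confirming that the numbers genuinely clear the threshold $1/10$ with room to spare (indeed $19/50 > 1/10$). I would write the two implications in parallel, perhaps doing the first in full and remarking that the second is obtained by exchanging the roles of $\theta_1$ and $\theta_2$ and negating.
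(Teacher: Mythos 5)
Your proposal is correct, and it follows essentially the same strategy as the paper: lower-bound $\langle v,\theta_1\rangle$ by $49/50$ and upper-bound $|\langle v,\theta_2\rangle|$ and $|\langle v,\theta_3\rangle|$, then tally. The only (harmless) difference is mechanical: the paper controls the cross terms by first bounding $|v-\theta_j|$ via the triangle inequality and the identity $\langle v,\theta_j\rangle = 1-\tfrac12|v-\theta_j|^2$, obtaining $|\langle v,\theta_j\rangle|\leq 3/7$, whereas your Cauchy--Schwarz decomposition $\langle v,\theta_j\rangle = \langle\theta_1,\theta_j\rangle + \langle v-\theta_1,\theta_j\rangle$ gives the slightly sharper $3/10$; both comfortably clear the $1/10$ threshold.
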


\begin{proof} Recall that $\left| \langle \theta_1, \theta_2 \rangle \right| \leq 1/10$.
Note that for any $\theta_3 \in \cF_3$ and $i,j \in \{ 1, 2, 3 \}$ with $i \neq j$,
$$ \sqrt{9/5} \leq |\theta_i - \theta_j| \leq \sqrt{11/5}. $$
Let $v \in S^{n-1}$
be any vector with $|v - \theta_1| \leq 1/5$. Then for any $\theta_3 \in \cF_3$ and $j=2,3$
we have that $$
\sqrt{\frac{9}{5}} - \frac{1}{5} \leq |\theta_j - \theta_1| - |\theta_1 - v|
\leq |v - \theta_j| \leq |\theta_j - \theta_1| + |\theta_1 - v| \leq \sqrt{\frac{11}{5}} + \frac{1}{5},  $$
and hence for $j=2,3$,
\begin{equation}  \langle v, \theta_j \rangle = 1 - \frac{1}{2} \cdot |v - \theta_j|^2 \in
\left[1 - \frac{1}{2} \cdot \left( \sqrt{\frac{11}{5}} + \frac{1}{5}   \right)^2, 1  - \frac{1}{2} \cdot \left( \sqrt{\frac{9}{5}} - \frac{1}{5}   \right)^2 \right] \subseteq
\left[ -\frac{3}{7}, \frac{3}{7} \right]. \label{eq_1409} \end{equation}
However, $\langle v, \theta_1 \rangle \geq 49 / 50$ for such $v$, and hence (\ref{eq_1408}) follows from (\ref{eq_1409}).
By replacing the triplet $(\theta_1, \theta_2, \theta_3)$ by $(\theta_2, \theta_1, -\theta_3)$ and repeating the above argument, we obtain (\ref{eq_1410}).
\end{proof}

\begin{proof}[Proof of Proposition \ref{prop_818}]
From Corollary \ref{cor_103} and Corollary \ref{cor_1726} we learn that there exists $\cF \subseteq S^{n-1}$ with
$\sigma_{n-1}(\cF_3) \geq 1 - C / n^{c}$
such that for any $\theta_3 \in \cF$, defining $\theta$ via (\ref{eq_741}),
\begin{equation} \forall t \in [0, t_0], \ \ \min \left \{ \PP \left( \langle X, \theta \rangle \geq c_1 \frac{M}{\sqrt{n}} \cdot t \right),
\PP \left( \langle X, \theta \rangle \leq -c_1 \frac{M}{\sqrt{n}} \cdot t \right) \right \}  \geq c_2 e^{-C_3 t^2}. \label{eq_1125}
\end{equation}
According to Lemma \ref{lem_1127}, we may remove a set of measure zero from $\cF$ and  additionally assume
that $\PP( \langle X, \theta \rangle = 0) = 0$. From Lemma \ref{lem_746} we learn  that any median of $|\langle X, \theta \rangle|$ is at most $C M / \sqrt{n}$.
Hence (\ref{eq_1125}) shows that for any $\theta_3 \in \cF$, defining $\theta$ via (\ref{eq_741}) we have that $\langle X, \theta \rangle$
is Super-Gaussian of length $c_1 t_0$, with parameters $c_2, c_3 > 0$. We still need to increase the length to $c_1 \sqrt{n}$.
To this end, denote
$$ \cF_3 = \left \{ \theta_3 \in \cF \ ; \ |\langle \theta_3, \theta_1 \rangle| \leq \frac{1}{10}  \quad \text{and} \quad
|\langle \theta_3, \theta_2 \rangle| \leq \frac{1}{10} \right \}. $$
Then $\sigma_{n-1}(\cF_3) \geq \sigma_{n-1}(\cF) - C \exp(-c n) \geq 1 - \tilde{C} / n^{\tilde{c}}$.
Recall from Section \ref{sec_stra} that for $j=1,2$,
\begin{equation}
\PP \left( |X| \geq M \ \text{and} \ \left| \frac{X}{|X|} - \theta_j \right| \leq \frac{1}{5}  \right)
\geq \frac{1}{2} \cdot e^{-t_0^2}. \label{eq_1423}
\end{equation}
Let us fix $t \in [t_0, \sqrt{n}], \theta_3 \in \cF_3$ and define $\theta$ via (\ref{eq_741}).
Since $0 < |\theta_1 - \theta_2 + \theta_3| \leq 3$, by (\ref{eq_1423}) and Lemma \ref{lem_1421},
\begin{align*}  \PP   \left( \langle X, \theta \rangle \geq \frac{M t}{30 \sqrt{n}}  \right)  & \geq
\PP \left( \langle X, \theta_1 - \theta_2 + \theta_3 \rangle \geq \frac{M t}{10 \sqrt{n}} \right)
 \geq \PP \left( \left \langle \frac{X}{|X|}, \theta_1 - \theta_2 + \theta_3 \right \rangle \geq \frac{M}{10 |X|} \right)
\\ & \geq
\PP \left( |X| \geq M, \ \left |  \frac{X}{|X|} - \theta_1 \right| \leq \frac{1}{5} \right) \geq \frac{1}{2} \cdot e^{-t_0^2} \geq \frac{1}{2} \cdot e^{-t^2}.
\end{align*}
Similarly,
\begin{align*}  \PP   \left( \langle X, \theta \rangle \leq -\frac{M t}{30 \sqrt{n}}  \right)  & \geq
 \PP \left( \left \langle \frac{X}{|X|}, \theta_1 - \theta_2 + \theta_3 \right \rangle \leq -\frac{M}{10 |X|} \right)
\\ & \geq
\PP \left( |X| \geq M, \ \left |  \frac{X}{|X|} - \theta_2 \right| \leq \frac{1}{5} \right) = e^{-t_0^2} \geq  e^{-t^2}.
\end{align*}
Therefore, we may upgrade (\ref{eq_1125}) to the following statement: For any $\theta_3 \in \cF$ and $t \in [0, \sqrt{n}]$, defining
$\theta$ via (\ref{eq_741}),
$$ \min \left \{ \PP \left( \langle X, \theta \rangle \geq c_1 \frac{M}{\sqrt{n}} \cdot t \right),
\PP \left( \langle X, \theta \rangle \leq -\hat{c}_1 \frac{M}{\sqrt{n}} \cdot t \right) \right \}  \geq \hat{c}_2 e^{-\hat{C}_3 t^2}. $$
We have thus proven that $\langle X, \theta \rangle$ is Super-Gaussian of length $\overline{c}_1 \sqrt{n}$ with parameters $\overline{c}_2, \overline{c}_3 > 0$.
\end{proof}

\section{Angularly-isotropic position}
\label{sec_ang}

In this section we deduce Theorem \ref{thm_1140} from Proposition \ref{prop_818}
by using the angularly-isotropic position which is discussed below. We begin with the following:

\begin{lemma} Let $d, X, \cB$ be as in Theorem \ref{thm_1140}. Set $n = \lceil d \rceil$.
Then there exists a fixed linear map $T: \cB \rightarrow \RR^{n}$
such that for any $\eps > 0$, the random vector $T(X)$ is of class $\effrank_{\geq d - \eps}$. \label{lem_1117}
\end{lemma}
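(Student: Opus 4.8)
The plan is to use a compactness/continuity argument to find the optimal linear map, after disposing of a degenerate case. First I would reduce to the situation where $X$ genuinely spans $\cB$: if $\PP(X \in E) = 1$ for some proper subspace $E \subsetneq \cB$, then the effective-rank hypothesis forces $\dim E \geq d$ (apply \eqref{eq_1011} to $E$), and we may replace $\cB$ by $E$; iterating, we may assume $X$ is not supported on any proper subspace, so in particular $\dim \cB \geq d$, i.e. $\dim \cB \geq n = \lceil d \rceil$. Fixing a basis, identify $\cB$ with $\RR^m$ for some $m \geq n$. Now for a surjective linear map $T : \RR^m \to \RR^n$, the random vector $T(X)$ lies in a linear subspace $E \subseteq \RR^n$ iff $X$ lies in $T^{-1}(E)$, a subspace of $\RR^m$ of dimension $\dim E + (m-n)$. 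So the natural quantity to control is
\begin{equation}
\Phi(T) := \sup_{0 \neq E \subsetneq \RR^n} \frac{\PP(T(X) \in E)}{\dim(E)} \label{eq_Phi}
\end{equation}
together with the strict-inequality/equality clause in Definition \ref{def_1131}. The claim is that there exists $T$ surjective with $\PP(T(X) \in E) \leq \dim(E)/(d-\eps)$ for every $\eps > 0$ and every $E$; equivalently, we want a $T$ for which $\Phi(T) \leq 1/d$ and, whenever $\PP(T(X)\in E) = \dim(E)/d$, the complementary-subspace condition holds.

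Next I would set up the optimization. Consider maximizing, over surjective $T$ (which we may normalize, e.g. $T T^* = \id$, so $T$ ranges over a compact set — the Stiefel-type manifold of $n$-frames in $\RR^m$), the "smallest probability mass captured" — more precisely, minimize $\Phi(T)$ of \eqref{eq_Phi}. The key points to check are: (a) for a generic $T$ (outside a measure-zero set of frames) we have $\Phi(T) < \infty$ and in fact the quantity is controlled — indeed for each fixed proper subspace $E$, $\{T : \PP(T(X)\in E) > \dim(E)/d\}$ needs analysis via the spanning hypothesis on $X$; (b) $T \mapsto \PP(T(X) \in E)$ is upper semicontinuous for fixed $E$ (since $\{T(X) \in E\}$ is a closed condition pointwise in $X$ and one uses Fatou), and the supremum over the Grassmannian of $E$'s of each dimension is again u.s.c. by compactness of the Grassmannian; hence $\Phi$ is u.s.c. and attains its infimum on the compact frame manifold at some $T_0$. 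The substance is then to show $\Phi(T_0) \leq 1/d$: if not, there is $E_0$ with $\PP(T_0(X) \in E_0) > \dim(E_0)/d$, and one derives a contradiction with the effective-rank hypothesis on $X$ applied to $T_0^{-1}(E_0)$, whose dimension is $\dim(E_0) + (m-n) = \dim(E_0) + (m - \lceil d\rceil)$; since $\lceil d \rceil \geq d$, one has $\dim(E_0)/d > (\dim(E_0) + m - n)/\text{something}$... — this is exactly where the hypothesis $\PP(X \in F) \leq \dim(F)/d$ for the subspace $F = T_0^{-1}(E_0) \subsetneq \RR^m$ (proper because $X$ spans) must be combined with the relation $\PP(T_0(X)\in E_0) = \PP(X \in T_0^{-1}(E_0))$ to close the loop, together with a perturbation argument showing $T_0$ is not genuinely optimal if the inequality were strict. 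The boundary/equality clause of Definition \ref{def_1131} is handled separately: equality $\PP(T_0(X)\in E_0) = \dim(E_0)/d$ propagates back to equality $\PP(X \in T_0^{-1}(E_0)) = \dim(E_0)/d$, but $\lceil d \rceil$ may differ from $d$ so one must be slightly careful — this is presumably why the statement only asks for $\effrank_{\geq d-\eps}$ for all $\eps>0$ rather than $\effrank_{\geq d}$, which lets us sidestep the exact equality case entirely: it suffices to get $\PP(T(X)\in E) < \dim(E)/(d-\eps)$ with strict inequality, or rather $\leq \dim(E)/(d-\eps)$.

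The main obstacle, I expect, is verifying that the infimum of $\Phi$ is actually $\leq 1/d$ (and not something larger) — i.e. producing, from the effective-rank hypothesis on the high-dimensional $X$, a map $T$ that does not "over-concentrate" $T(X)$ on any subspace of the small space $\RR^n$. The clean way is probably not raw optimization but rather an explicit construction or an averaging argument: take $T$ to be a generic (e.g. Gaussian random, or generic rational) linear map and show that almost surely $\PP(T(X) \in E) \leq \dim(E)/(d-\eps)$ for all $E$ simultaneously — for a fixed $E$ this reduces to estimating $\PP(X \in T^{-1}(E))$ and noting $T^{-1}(E)$ is a "generic" subspace of $\RR^m$ of the right dimension, to which the hypothesis applies; the uniformity over all $E$ (a continuum) is obtained by the usual net argument on the Grassmannian together with u.s.c. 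I would also double-check the edge case $\dim \cB = n$ (then $T$ is an isomorphism and there is nothing to prove) and the case $d \in \ZZ$ versus $d \notin \ZZ$. The cleanest route overall: first handle the case $\dim\cB$ possibly infinite by an exhaustion reducing to finite $\cB$, then finite-dimensional $\cB = \RR^m$ with $m > n$, then a generic projection works, with the $\eps$-slack absorbing the discrepancy between $d$ and $\lceil d \rceil$.
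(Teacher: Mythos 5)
Your instinct to use a generic projection $T=\proj_L$ onto an $n$-dimensional subspace is the same as the paper's, but there are two genuine gaps in the way you propose to close the argument. First, the ``close the loop'' step does not work as written: if $\PP(T(X)\in E)>\dim(E)/d$ for some $E\subseteq\RR^n$, then applying the effective-rank hypothesis to $F=T^{-1}(E)\subseteq\RR^m$ only yields $\PP(X\in F)\leq(\dim(E)+m-n)/d$, and the inflation by $m-n$ makes this bound useless (it typically exceeds $1$); your own ellipsis at ``$\dim(E_0)/d>(\dim(E_0)+m-n)/\text{something}\ldots$'' sits exactly where the argument breaks. Second, the uniformity over the continuum of subspaces $E$ cannot be obtained by ``the usual net argument together with u.s.c.'': genericity of $T$ for a \emph{fixed} subspace holds only almost surely, so you cannot intersect over uncountably many subspaces, and upper semicontinuity of $E\mapsto\PP(T(X)\in E)$ controls the wrong direction when you try to bound a supremum by its values on a dense net (the probability can jump up at the limit subspace).

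The paper resolves both issues with a single device, Lemma 7.1 of \cite{K_euro}: for each $a>0$ there are only finitely many \emph{$a$-basic} subspaces, i.e.\ minimal subspaces $E$ with $\PP(X\in E)\geq a$, hence countably many as $a$ ranges over the rationals. If $\PP(T(X)\in S)\geq a$ for some $S\subseteq\RR^n$, then $T^{-1}(S)$ contains an $a$-basic subspace $E$; the effective-rank hypothesis applied to $E$ (not to $T^{-1}(S)$) gives $\dim(E)\geq a\cdot d$, and a projection onto a uniformly random $n$-dimensional subspace satisfies $\dim(T(E))=\min\{n,\dim(E)\}$ simultaneously for all the countably many basic subspaces with probability one, whence $\dim(S)\geq\dim(T(E))\geq\lceil a\cdot d\rceil$. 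Ranging over rational $a$ yields $\PP(T(X)\in S)\leq\dim(S)/d$ for every $S$, which is strictly stronger than what class $\effrank_{\geq d-\eps}$ demands (so the equality clause of Definition \ref{def_1131} never arises). Without this countability-of-basic-subspaces input, or an equivalent substitute, neither your minimization of $\Phi$ nor your generic-projection route goes through.
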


\begin{proof} We will show that a generic linear map $T$ works. Denote $N = \dim(\cB)$  and identify $\cB \cong \RR^N$.
Since the effective rank of $X$ is at least $d$, necessarily $d \leq N$ and hence also $n = \lceil d \rceil \leq N$.
Let $L \subseteq \RR^N$ be a random $n$-dimensional subspace, distributed uniformly in the Grassmannian $G_{N,n}$.
Denote $T = \proj_L: \RR^N \rightarrow L$, the orthogonal projection operator onto the subspace $L$. 

\medskip For any fixed subspace $E \subseteq \RR^N$, with probability one of selecting  $L \in G_{N,n}$,
$$ \dim(\ker(T) \cap E) = \max \{ 0, \dim(E) - n \}, $$
or equivalently,
\begin{equation}
 \dim(T(E)) = \dim(E) - \dim(\ker(T) \cap E) = \min \{ n, \dim(E) \}. \label{eq_1000}
 \end{equation}
Recall that for $a > 0$, a subspace $E \subseteq \RR^N$ is $a$-basic if $\PP(X \in E) \geq a$
while $\PP(X \in F) < a$ for all subspaces $F \subsetneq E$. Lemma 7.1 in \cite{K_euro} states that
there exist only countably  many subspaces that are $a$-basic with $a$ being a positive, rational number.
Write $\cG$ for the collection of all these basic subspaces. Then with probability one of selecting $L \in G_{N, n}$,
\begin{equation}
 \forall E \in \cG, \qquad \dim(T(E)) = \min \{ n, \dim(E) \}. \label{eq_919}
 \end{equation}
We now fix a subspace $L \in G_{N,n}$ for which $T = \proj_L$ satisfies (\ref{eq_919}). Let $S \subseteq L$ be any subspace and assume that $a \in \QQ \cap (0,1]$  satisfies
$$ \PP(T(X) \in S) \geq a. $$
Then $\PP(X \in T^{-1}(S)) \geq a$. Therefore $T^{-1}(S)$ contains an $a$-basic subspace $E$. Thus
$E \in \cG$ while  $E \subseteq T^{-1}(S)$ and $\PP(X \in E) \geq a$. Since the effective rank of $X$ is at least $d$, necessarily $\dim(E) \geq a \cdot d$.
Since $T(E) \subseteq S$, from (\ref{eq_919}),
$$ \dim(S) \geq \dim(T(E)) = \min \{ n, \dim(E) \} \geq 
\min \{ n, \lceil a \cdot d \rceil \} = \lceil a \cdot d \rceil. $$
We have thus proven that for any subspace $S \subseteq L$ and $a \in \QQ \cap (0, 1]$,
\begin{equation} \PP(T(X) \in S) \geq a  \qquad \Longrightarrow \qquad \dim(S) \geq \lceil a \cdot d \rceil. \label{eq_1034} \end{equation}
It follows from (\ref{eq_1034}) that for any subspace $S \subseteq L$,
$$ \PP(T(X) \in S) \leq  \dim(S) / d. $$
This implies that for any $\eps > 0$, the random vector $T(X)$ is of class $\effrank_{\geq d - \eps}$ .
\end{proof}

\begin{lemma} Let $d, X, \cB$ be as in Theorem \ref{thm_1140}.
Assume that $d < \dim(\cB)$ and that for any subspace $\{ 0 \} \neq E \subsetneq \cB$,
\begin{equation} \PP(X \in E) < \dim(E) / d. \label{eq_1038} \end{equation}
Then there exists $\eps  > 0$ such that $X$ is of class $\effrank_{\geq d + \eps}$.  \label{lem_1053}
\end{lemma}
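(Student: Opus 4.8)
Here is how I would prove Lemma~\ref{lem_1053}.

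My plan is to show that the subspace-by-subspace strict inequality \eqref{eq_1038} forces a \emph{uniform} strict inequality. Write $\mu$ for the law of $X$; since $X$ is of class $\effrank_{\ge d}$ with $d \ge 1$, substituting $E = \{0\}$ in \eqref{eq_1011} already gives $\PP(X = 0) = 0$, and $\dim(\cB) \ge 2$ since $1 \le d < \dim(\cB)$. The quantity to control is
$$ \gamma := \sup\left\{ \frac{\PP(X \in E)\, d}{\dim(E)} \ ; \ \{0\} \ne E \subsetneq \cB \right\}, $$
which satisfies $\gamma \le 1$ by \eqref{eq_1038}; I claim $\gamma < 1$. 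The set of proper non-zero subspaces is the disjoint union over $m = 1,\dots,\dim(\cB)-1$ of the Grassmannians $G_{\cB,m}$, so $\gamma = \max_m (d/m)\, s_m$ where $s_m := \sup\{\PP(X\in E) : \dim(E)=m\}$. Hence it suffices to show that each $s_m$ is \emph{attained}: the maximizing $m$ then produces a proper non-zero subspace at which $\gamma$ is attained, and $\gamma<1$ is then immediate from \eqref{eq_1038}.

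To prove $s_m$ is attained I would show that $E \mapsto \mu(E)$ is upper semi-continuous on the compact space $G_{\cB,m}$; this is the heart of the matter and the main obstacle, everything else being bookkeeping. Suppose $E_k \to E_\infty$ in $G_{\cB,m}$ and, toward a contradiction, $\mu(E_k) \to L > \mu(E_\infty)$; put $\delta = L - \mu(E_\infty) > 0$. For large $k$, $\mu(E_k \setminus E_\infty) \ge \mu(E_k) - \mu(E_\infty) > 3\delta/4$. Fix $R$ with $\mu(\{|x|>R\}) < \delta/4$. Since $\operatorname{dist}(y, E_\infty) \le \|\proj_{E_k} - \proj_{E_\infty}\|\cdot|y|$ for $y \in E_k$ and $\|\proj_{E_k}-\proj_{E_\infty}\| \to 0$, the set $(E_k\setminus E_\infty)\cap\{|x|\le R\}$ lies in the shrinking slab $A_k := \{|x|\le R \ ; \ \operatorname{dist}(x,E_\infty) \le \rho_k\}$ with $\rho_k \to 0$, so $\mu(A_k\setminus E_\infty) > \delta/2$ for large $k$. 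But, passing to a subsequence with $\rho_k \downarrow 0$, the sets $A_k$ decrease to $\{|x|\le R\}\cap E_\infty$ while $A_k \cap E_\infty = \{|x|\le R\}\cap E_\infty$ for every $k$; hence continuity of $\mu$ from above gives $\mu(A_k\setminus E_\infty) = \mu(A_k) - \mu(\{|x|\le R\}\cap E_\infty) \to 0$, a contradiction. (Intuitively: $\mu$-mass cannot accumulate onto a subspace from nearby subspaces without actually sitting on it.) This yields $\gamma < 1$. Alternatively, $\gamma<1$ can be obtained by a finite descent, peeling off $a$-basic subspaces and passing to quotients, using that for each fixed $a>0$ there are only finitely many $a$-basic subspaces (Lemma 7.1 of \cite{K_euro}).

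With $\gamma < 1$ in hand I would finish as follows. Set $d' := \dim(\cB)$ if $\gamma = 0$, and $d' := \min\{d/\gamma,\ \dim(\cB)\}$ if $\gamma > 0$; in both cases $d' > d$, since $\gamma < 1$ and $d < \dim(\cB)$. Choose any $\eps \in (0, d'-d)$. Then for every proper non-zero subspace $E$ we get $\PP(X\in E) \le \gamma\dim(E)/d < \dim(E)/(d+\eps)$ (reading the left inequality as $\PP(X\in E)=0$ when $\gamma=0$); moreover $\PP(X\in\cB) = 1 < \dim(\cB)/(d+\eps)$ and $\PP(X\in\{0\}) = 0 = \dim(\{0\})/(d+\eps)$. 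Thus $\PP(X\in E) \le \dim(E)/(d+\eps)$ for every subspace $E \subseteq \cB$, with strict inequality whenever $\{0\}\ne E\subsetneq\cB$. For the equality clause of Definition~\ref{def_1131}: equality can only hold at $E=\{0\}$, where $F=\cB$ serves as a witness; and no pair of proper non-zero subspaces satisfies $E\oplus F=\cB$ and $\PP(X\in E\cup F)=1$, because Definition~\ref{def_1131} applied with the original $d$ would then force $\PP(X\in E) = \dim(E)/d$, contradicting \eqref{eq_1038}. Exactly as in the proof of Lemma~\ref{lem_1117}, we conclude that $X$ is of class $\effrank_{\ge d+\eps}$, as desired.
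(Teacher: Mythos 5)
Your proof is correct and rests on essentially the same mechanism as the paper's: compactness of the Grassmannian together with the fact that $\mu$-mass cannot accumulate on a subspace from nearby subspaces of the same dimension without actually charging that subspace (your slab-plus-truncation argument is the paper's conic-neighborhood argument in different clothing). The only differences are cosmetic — you phrase it as upper semi-continuity and attainment of the supremum $\gamma$, while the paper argues by contradiction along a convergent violating sequence — and your handling of the equality clause in Definition \ref{def_1131} is if anything more explicit than the paper's.
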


\begin{proof} Since the effective rank of $X$ is at least $d$, necessarily $\PP(X = 0) = 0$.
Assume by contradiction that for any $\eps > 0$, the random vector $X$ is not of class $\effrank_{\geq d + \eps}$. 
Then for  any $\eps > 0$ there exists a subspace $\{0 \} \neq E \subseteq \cB$ with
$$ \PP(X \in E) \geq - \eps + \dim(E) / d . $$
The Grassmannian of all $k$-dimensional subspaces of $\cB$ is compact. Hence there is a dimension $1 \leq k \leq \dim(\cB)$ and a converging sequence of $k$-dimensional subspaces $E_1,E_2,\ldots \subseteq \cB$
with
\begin{equation} \PP(X \in E_{\ell}) \geq -1/\ell + \dim(E_{\ell}) / d = - 1/\ell + k/d  \qquad \qquad \text{for all} \ \ell \geq 1. \label{eq_1804} \end{equation}
Denote $E_0 = \lim_{\ell} E_{\ell}$, which is a $k$-dimensional subspace in $\cB$.
Let $U \subseteq \cB$ be an open neighborhood of $E_0$ with the property that $tx \in U$ for all $x\in U, t \in \RR$.
Then $E_{\ell} \subseteq U$ for a sufficiently large $\ell$, and we learn from (\ref{eq_1804}) that
\begin{equation} \PP(X \in U) \geq k/d.
\label{eq_1759}
\end{equation}
Since $E_0$ is the intersection of a decreasing sequence of such neighborhoods $U$, it follows from (\ref{eq_1759}) that
\begin{equation} \PP(X \in E_0) \geq k / d = \dim(E_0) / d.
\label{eq_1039}
\end{equation}
Since $d < \dim(\cB)$, the inequality in (\ref{eq_1039}) shows that $E_0 \neq \cB$. Hence $1 \leq \dim(E_0) \leq \dim(\cB)-1$, and (\ref{eq_1039}) contradicts (\ref{eq_1038}).
The lemma is thus proven.
\end{proof}

The following lemma is a variant of Lemma 5.4 from \cite{K_euro}.

\begin{lemma} Let $d, X, \cB$ be as in Theorem \ref{thm_1140}.
Then there exists a fixed scalar product $\langle \cdot, \cdot \rangle$ on $\cB$ such that denoting $| \theta | = \sqrt{\langle \theta, \theta \rangle}$,
we have
\begin{equation}  \EE \left \langle \frac{X}{| X |}, \theta \right \rangle^2 \leq \frac{| \theta |^2}{d} \qquad \qquad \text{for all} \ \theta \in \cB. \label{eq_1030}
\end{equation}
\label{lem_1010}
\end{lemma}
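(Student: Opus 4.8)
**The plan is to find the scalar product by an optimization / fixed-point argument, exploiting compactness and the effective rank hypothesis.** The inequality in \eqref{eq_1030} is a statement about an \emph{angularly-isotropic position}: rewriting it, we want a positive-definite quadratic form $Q(\theta) = \langle\theta,\theta\rangle$ on $\cB$ such that the second-moment matrix of the normalized vector $X/|X|$ (with respect to $Q$) is dominated by $\id/d$. Equivalently, if $A$ denotes the linear operator with $\langle A\theta,\eta\rangle = \EE[\langle X/|X|,\theta\rangle\langle X/|X|,\eta\rangle]$, we want $A \preceq \id/d$ in the $Q$-geometry. Since $X/|X|$ lives on the unit sphere of $(\cB,Q)$, the trace of $A$ equals $\EE|X/|X||^2 = 1$, so we always have $\mathrm{tr}(A) = 1$; the content is to make the \emph{largest} eigenvalue of $A$ at most $1/d$, which (because the trace is $1$ and $\dim\cB \geq d$) is the ``flattest possible'' spectrum compatible with the constraints. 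Note first that by Lemma \ref{lem_1117} (and by replacing $\cB$ with its image) it suffices to treat the case $\dim(\cB) = n := \lceil d\rceil$, and by Lemma \ref{lem_1053} we may further assume the effective rank is \emph{exactly} $d$ with strict inequality $\PP(X\in E) < \dim(E)/d$ for all proper nonzero subspaces, \emph{or} we are in the degenerate equality case $d = n$ where $X$ splits across a pair of complementary subspaces and one handles each block by induction.

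\textbf{First, I would set up the variational problem.} Fix any reference Euclidean structure on $\cB$ and consider the set $\mathcal{P}$ of positive-definite symmetric operators $T$ with $\det T = 1$; a general scalar product is $\langle\theta,\eta\rangle_T = \langle T\theta,\eta\rangle$. For each such $T$, form the angular second-moment operator $A_T$ as above (with respect to $\langle\cdot,\cdot\rangle_T$-normalization of $X$), which has $\mathrm{tr}_T(A_T) = 1$. I would look at the functional $F(T) = \EE \log\langle T X, X\rangle$ — equivalently, try to minimize (or find a critical point of) a suitable entropy-like functional over $\mathcal{P}$ — because the Euler--Lagrange / first-order optimality condition for such a log-determinant-type functional is precisely that $A_T$ be a multiple of the identity, i.e. $A_T = \id/n$ in the $T$-geometry. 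When the effective-rank inequality is strict this minimum is attained at an \emph{interior} point (the functional blows up appropriately as $T$ degenerates, using that $\PP(X\in E)$ is strictly less than $\dim(E)/d$ to rule out escape toward the boundary), and there the stationarity condition gives $\EE\langle X/|X|_T,\theta\rangle_T^2 = |\theta|_T^2/n \leq |\theta|_T^2/d$, which is \eqref{eq_1030}.

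\textbf{I expect the main obstacle to be exactly this boundary/compactness analysis}, i.e. showing the infimum is attained in the interior of $\mathcal{P}$ rather than at the boundary where $T$ acquires a zero eigenvalue. The degeneration of $T$ corresponds to pushing mass of $X/|X|$ onto a subspace, and one must quantify that the hypothesis $\PP(X\in E) \le \dim(E)/d$ (with the sharp equality case understood) exactly balances the rate at which $F(T)$ changes as $T \to \partial\mathcal{P}$ along a direction collapsing onto $E$. This is the same phenomenon as in the classical Barthe/John-position arguments and is the reason Definition \ref{def_1131} is stated with its particular equality clause. The equality case $d = n$ with $X$ supported on $E\cup F$, $E\oplus F = \cB$, has to be peeled off separately: there the second-moment operator is forced to be block-diagonal and one normalizes each block so that $A_T = \id/n$ block by block; a short induction on $\dim\cB$ closes this case. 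Once interiority is established, the remaining computation — differentiating $F$ and reading off that the gradient vanishing means $A_T$ is scalar — is routine linear algebra, and combined with $\dim\cB = n \geq d$ it yields the stated bound $|\theta|^2/d$.
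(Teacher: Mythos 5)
Your variational setup is the right idea for the case $d=\dim(\cB)$ (it is essentially what underlies Corollary 5.3 of \cite{K_euro}, which the paper cites for exactly that case), and your handling of the equality case by splitting $\cB=E\oplus F$ and inducting matches the paper. But there is a genuine gap in the case $d<\dim(\cB)=n$. At an interior critical point of your functional the stationarity condition forces \emph{exact} angular isotropy, $\EE\langle X/|X|_T,\theta\rangle_T^2=|\theta|_T^2/n$, and exact isotropy in turn forces $\PP(X\in E)\le\EE|\proj_E(X/|X|)|^2=\dim(E)/n$ for every subspace $E$. The hypothesis you actually have is only $\PP(X\in E)<\dim(E)/d$, which is strictly weaker when $d<n$. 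So the strict effective-rank inequality does \emph{not} rule out escape of the minimizing sequence to the boundary of $\mathcal{P}$: for instance in $\RR^2$ with $d=3/2$, a distribution putting mass $0.6$ on the line $\RR e_1$ can have effective rank at least $3/2$ with strict inequality on every proper subspace, yet admits no angularly-isotropic position at all, since isotropy would require $0.6\le 1/2$; the infimum of $F$ degenerates along $e_1$. The conclusion (\ref{eq_1030}) with constant $1/d$ is still true in such examples, but it cannot be read off from an interior Euler--Lagrange equation. This is precisely why the paper splits the strict case in two: $d=n$ goes through Corollary 5.3 of \cite{K_euro} (your argument), while $d<n$ is first upgraded via Lemma \ref{lem_1053} to class $\effrank_{\ge d+\eps}$ and then fed into Lemma 5.4 of \cite{K_euro}, a genuinely sub-isotropic statement whose proof is not an interior critical-point argument. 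Your proposal is missing that ingredient.

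A smaller point: your preliminary reduction to $\dim(\cB)=\lceil d\rceil$ via Lemma \ref{lem_1117} is both unnecessary and lossy --- that lemma only yields class $\effrank_{\ge d-\eps}$, so after projecting you could conclude (\ref{eq_1030}) only with $d-\eps$ in place of $d$. The paper proves the lemma for general $\cB$ directly by induction on $\dim(\cB)$, reserving Lemma \ref{lem_1117} for the proof of Theorem \ref{thm_1140}, where a constant-factor loss in the effective rank is affordable.
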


\begin{proof} By induction on the dimension $n = \dim(\cB)$.
Assume first that there exists a subspace $\{ 0 \} \neq E \subsetneq \cB$,
such that equality holds true in (\ref{eq_1011}). In this case, there exists a subspace $F \subseteq \cB$
with $E \oplus F = \cB$ and $\PP(X \in E \cup F) = 1$. We will construct a scalar product in $\cB$ as follows:
Declare that $E$ and $F$ are orthogonal subspaces, and use the induction
hypothesis in order to find appropriate scalar products in the subspace $E$ and in the subspace $F$.
This induces a scalar product in $\cB$ which satisfies
$$ \EE \left \langle \frac{X}{|X|}, \theta \right \rangle^2 \leq \frac{| \theta |^2}{d} \qquad \qquad \text{for all} \ \theta \in E \cup F. $$
For any $\theta \in \cB$ we may  decompose $\theta = \theta_E + \theta_F$ with $\theta_E \in E, \theta_F \in F$.
Since $\PP(X \in E \cup F) = 1$, we obtain
$$ \EE \left \langle \frac{X}{|X|}, \theta \right \rangle^2  =
\EE \left \langle \frac{X}{|X|}, \theta_E \right \rangle^2
+ \EE \left \langle \frac{X}{|X|}, \theta_F \right \rangle^2 \leq \frac{|\theta_E|^2 + |\theta_F|^2}{d} = \frac{|\theta|^2}{d}, $$
proving (\ref{eq_1030}).

\medskip 
Next, assume that for any subspace $\{ 0 \} \neq E \subsetneq \cB$, the inequality in (\ref{eq_1011}) is strict.
There are two distinct cases, either $d = n$ or $d < n$. Consider first the case where $d = n = \dim(\cB)$. Thus,
for any subspace $E \subseteq \cB$ with $E \neq \{ 0 \}$ and $E \neq \cB$,
$$
\PP(X \in E) < \dim(E) / n. $$
This  is precisely the main assumption of Corollary 5.3 in \cite{K_euro}. By the conclusion 
of the corollary, there exists a scalar product in $\cB$ such that (\ref{eq_1030}) holds true. 
We move on to the case where $d < n$. Here,  we apply Lemma \ref{lem_1053} and conclude that $X$ is of class $\effrank_{\geq d + \eps}$ for some $\eps > 0$.
Therefore, for some $\eps > 0$, 
\begin{equation}
\PP(X \in E) < \dim(E) / (d + \eps) \qquad \qquad \forall E \subseteq \cB. \label{eq_955}
\end{equation}
Now we invoke Lemma 5.4 from \cite{K_euro}. Its assumptions are satisfies thanks to (\ref{eq_955}). 
From the conclusion of that lemma, there exists a scalar product in $\cB$ for which (\ref{eq_1030}) holds true.
\end{proof}

The condition that the effective rank of $X$ is at least $d$ is not only sufficient but  is 
also necessary for the validity of conclusion (\ref{eq_1030}) from Lemma \ref{lem_1010}. Indeed, it follows from (\ref{eq_1030}) 
that for any subspace $E \subseteq \cB$,
\begin{equation}  \PP(X \in E) \leq \EE \left| \proj_E \frac{X}{|X|} \right|^2 = \sum_{i=1}^{\dim(E)} \EE \left \langle \frac{X}{|X|}, u_i \right \rangle^2 \leq \frac{\dim(E)}{d}, 
\label{eq_1005} \end{equation}
where $u_1,\ldots,u_m$ is an orthonormal basis of the subspace $E$ with $m = \dim(E)$. Equality in (\ref{eq_1005}) holds true 
if and only if $\PP(X \in E \cup E^{\perp}) = 1$, where $E^{\perp}$ is the orthogonal complement to $E$. Consequently, the effective rank 
of  $X$ is at least $d$.

\begin{definition} Let $X$ be a random vector in $\RR^n$ with $\PP(X = 0) = 0$.
We say that $X$ is angularly-isotropic if
\begin{equation}
 \EE \left \langle \frac{X}{|X|}, \theta \right \rangle^2  = \frac{1}{n} \qquad \qquad \text{for all} \ \theta \in S^{n-1}.
 \label{eq_810}
 \end{equation}
For $0 < d \leq n$ we say that $X/|X|$ is sub-isotropic with parameter $d$ if
\begin{equation} \EE \left \langle \frac{X}{|X|}, \theta \right \rangle^2  \leq \frac{1}{d} \qquad \qquad \text{for all} \ \theta \in S^{n-1}.
\label{eq_1620} \end{equation}
\end{definition}

We observe that $X$ is angularly-isotropic if and only if $X/|X|$ is sub-isotropic with parameter $n$.
Indeed, suppose that (\ref{eq_1620}) holds true with $d = n$. Given any $\theta \in S^{n-1}$ we may find an orthonormal basis
$\theta_1,\ldots, \theta_n \in \RR^n$ with $\theta_1 =\theta$. Hence
$$ 1 = \EE \left| \frac{X}{|X|} \right|^2 = \EE \sum_{i=1}^n \left \langle \frac{X}{| X |}, \theta_i \right \rangle^2 \leq \sum_{i=1}^n \frac{1}{n} = 1, $$
and (\ref{eq_810}) is proven. 

\begin{proof}[Proof of Theorem \ref{thm_1140}]
According to Lemma \ref{lem_1117}, we may project $X$ to a lower-dimensional space,
and assume that $\dim(\cB) = n = \lceil d \rceil$ and that the effective rank of $X$ is at least $n/2$.
Lemma \ref{lem_1010} now shows that there exists a scalar product in $\cB$ with respect to which
$X/|X|$ is sub-isotropic with parameter $n/2$. We may therefore identify $\cB$ with $\RR^n$ so that
$$ \EE \left \langle \frac{X}{|X|}, \theta \right \rangle^2  \leq \frac{2}{n} \qquad \qquad \text{for all} \ \theta \in S^{n-1}.
$$
Thus condition (\ref{eq_1051}) of Proposition \ref{prop_818} is verified. By the conclusion of Proposition \ref{prop_818},
there exists a non-zero linear functional $\ell: \RR^n \rightarrow \RR$ such that $\ell(X)$ is Super-Gaussian of length $c_1 \sqrt{n} \geq c \sqrt{d}$
with parameters $c_2, c_3 > 0$.
\end{proof}

\begin{proof}[Proof of Corollary \ref{cor_1405}]
By assumption, 
 $\PP(X \in E) \leq \dim(E)  / d$
for any finite-dimensional subspace $E \subseteq \cB$.
Lemma 7.2 from \cite{K_euro} states that there exists a continuous, linear map $T: \cB \rightarrow \RR^N$
such that $T(X)$ has an effective rank of at least $d/2$. We may now invoke Theorem \ref{thm_1140} for the random vector $T(X)$, and 
conclude that for some non-zero, fixed, linear
functional $\ell: \RR^N \rightarrow \RR$, the random variable $(\ell \circ T)(X)$ is Super-Gaussian of length $c_1 \sqrt{d}$ with parameters $c_2, c_3 > 0$.
\end{proof}

\begin{remark} {\rm 
We were asked by Yaron Oz about analogs of Theorem \ref{thm_1030} 
in the hyperbolic space. We shall work with the standard hyperboloid model
$$ \HH^n = \left \{ (x_0,\ldots,x_n) \in \RR^{n+1} \, ; \, -x_0^2 + \sum_{i=1}^n x_i^2 = - 1, x_0 > 0 \right \} $$
where the Riemannian metric tensor is $g = -dx_0^2 + \sum_{i=1}^n d x_i^2$. 
For any linear subspace  $L \subseteq \RR^{n+1}$, the intersection $L \cap \HH^n$ is 
a totally-geodesic submanifold of $\HH^n$ which is called a hyperbolic subspace. 
When we discuss the dimension of a hyperbolic subspace, we refer to its dimension as a smooth manifold. Note 
that an $(n-1)$-dimensional hyperbolic subspace $E \subseteq \HH^n$ divides $\HH^n$ into two sides. A signed distance function 
$d_E: \HH^n \rightarrow \RR$ is a function that equals the hyperbolic distance to $E$ on one of these sides, and minus 
the distance to $E$ on the other side. Given 
a linear functional $\ell: \RR^{n+1} \rightarrow \RR$  such that 
$E = \HH^n \cap \{ x \in \RR^{n+1} \, ; \, \ell(x) = 0 \}$ we may write 
$$ d_E (x) = {\rm arcsinh}( \alpha \cdot \ell(x) ) \qquad \qquad (x \in \HH^n) $$
for some $0 \neq \alpha \in \RR$. It follows from Theorem \ref{thm_1140} that for any absolutely-continuous random vector $X$ in $\HH^n$, there exists an $(n-1)$-dimensional hyperbolic subspace
$E \subseteq \HH^n$ and an associated signed distance function $d_E$ such that the random variable $\sinh(d_E(X))$ is
Super-Gaussian of length $c_1 \sqrt{n}$ with parameters $c_2, c_3 > 0$. 
In general, we cannot replace the random variable $\sinh(d_E(X))$ in the preceding statement by  $d_E(X)$ itself. This 
is witnessed by the example of the random vector 
$$ X = \left(\sqrt{1 + R^2 \sum_{i=1}^n Z_i^2}, R Z_1,\ldots,R Z_n \right) \in \RR^{n+1} $$
which is supported in $\HH^n$. Here, $Z_1,\ldots,Z_n$ are independent standard Gaussian random variables,
and $R > 1$ is a fixed, large parameter.
}\end{remark}
{
}

\bigskip
\noindent School of Mathematical Sciences, Tel Aviv University, Tel Aviv 69978, Israel. \\ \noindent E-mail: klartagb@tau.ac.il


\begin{thebibliography}{99}


\bibitem{BLYZ2}
B\"or\"oczky, K., Lutwak, E., Yang, D., Zhang, G., {\it The logarithmic Minkowski problem.}
J. Amer. Math. Soc., Vol. 26, No. 3, (2013), 831–-852.

\bibitem{BLYZ1} B\"or\"oczky, K., Lutwak, E., Yang, D., Zhang, G., {\it Affine images of isotropic measures.}
 J. Diff. Geom., Vol. 99, No. 3, (2015), 407–-442.



\bibitem{BMS}
 Boucheron, S., Lugosi, G., Massart, P., {\it Concentration inequalities. A nonasymptotic theory of independence. } Oxford University Press, Oxford, 2013.


\bibitem{HL}  Henk, M., Linke., E., {\it Cone-volume measures of polytopes.} Adv. Math., Vol. 253, (2014), 50–-62.

\bibitem{K_euro}  Klartag, B., {\it On nearly radial marginals of high-dimensional probability measures.}
J. Eur. Math. Soc., Vol. 12, No. 3, (2010), 723–-754.

\bibitem{LT}  Ledoux, M., Talagrand, M., {\it Probability in Banach spaces. Isoperimetry and processes.}
Ergeb. Math. Grenzgeb. (3), Vol. 23, Springer-Verlag, Berlin, 1991.

\bibitem{MS}  Milman, V. D., Schechtman, G., {\it Asymptotic theory of finite-dimensional normed spaces.} Lecture Notes in Math., Vol. 1200. Springer-Verlag, Berlin, 1986.

\bibitem{Paouris} Paouris, G., {\it On the existence of supergaussian directions on convex bodies. } Mathematika, Vol. 58, No. 2, (2012), 389–-408.

\bibitem{Pivo}  Pivovarov, P., {\it On the volume of caps and bounding the mean-width of an isotropic convex body. }
Math. Proc. Cambridge Philos. Soc., Vol. 149, no. 2, (2010), 317–-331.

\end{thebibliography}
\end{document}